\newcommand{\msc}{\mathrm{sc}_m}
\DeclareMathOperator{\Par}{Par}
\newcommand{\Q}{{\mathbb Q}}
\newcommand{\N}{{\mathbb N}}
\newcommand{\R}{{\mathbb R}}
\newcommand{\W}{W}
\newlength{\cellsize}
\newcommand\tableau[1]{
\vcenter{
\let\\=\cr
\baselineskip=-16000pt
\lineskiplimit=16000pt
\lineskip=0pt
\halign{&\tableaucell{##}\cr#1\crcr}}}
\newcommand{\tableaucell}[1]{{%
\def \arg{#1}\def \void{}%
\ifx \void \arg
\vbox to \cellsize{\vfil \hrule width \cellsize height 0pt}%
\else
\unitlength=\cellsize
\begin{picture}(1,1)
\put(0,0){\makebox(1,1){$#1$}}
\put(0,0){\line(1,0){1}}
\put(0,1){\line(1,0){1}}
\put(0,0){\line(0,1){1}}
\put(1,0){\line(0,1){1}}
\end{picture}%
\fi}}
\theoremstyle{plain}
\newtheorem{thm}{Theorem}
\newtheorem{lem}[thm]{Lemma}
\newtheorem{prop}[thm]{Proposition}
\newtheorem{conj}[thm]{Conjecture}
\theoremstyle{definition}
\newtheorem{defn}[thm]{Definition}
\newtheorem{example}[thm]{Example}
\newtheorem{rem}[thm]{Remark}
\newtheorem{ack}{Acknowledgments}  
\theoremstyle{remark}
\def\area{{\rm area}}
\def\dinv{{\rm dinv}}
\def\qdeg{q\textrm{-}{\rm deg}}
\def\tdeg{t\textrm{-}{\rm deg}}
\begin{document}

\title[Combinatorics of higher $q,t$-Catalan polynomials]
{Combinatorics of certain higher $q,t$-Catalan polynomials:
 chains, joint symmetry, and the Garsia-Haiman formula} 
\author{Kyungyong Lee, Li Li, and Nicholas A. Loehr}
\thanks{Research of K.L. is partially supported by NSF grant DMS 0901367.}
\thanks{This work was partially supported by a grant from the Simons
Foundation (\#244398 to Nicholas Loehr).}

\address{Department of Mathematics, Wayne State University, Detroit, MI 48202}
\email{{\tt klee@math.wayne.edu}}
\address{Department of Mathematics and Statistics, Oakland University, Rochester, MI 48309}
\email{{\tt li2345@oakland.edu}}
\address{Department of Mathematics, Virginia Tech, Blacksburg, VA 24061;
and Mathematics Department, United States Naval Academy, Annapolis, MD 21402}
\email{{\tt nloehr@math.vt.edu}, {\tt loehr@usna.edu}}

\maketitle

\begin{abstract}
The higher $q,t$-Catalan polynomial $C^{(m)}_n(q,t)$
can be defined combinatorially as a weighted sum 
of lattice paths contained in certain triangles,
or algebraically as a complicated sum of rational functions
indexed by partitions of $n$. This paper proves the equivalence
of the two definitions for all $m\geq 1$ and all $n\leq 4$.
We also give a bijective proof of the joint symmetry property 
$C^{(m)}_n(q,t)=C^{(m)}_n(t,q)$ for all $m\geq 1$ and all $n\leq 4$. 
The proof is based on a general approach for proving joint symmetry
that dissects a collection of objects into chains, and then passes from 
a joint symmetry property of initial points and terminal points
to joint symmetry of the full set of objects.  
Further consequences include unimodality results and specific formulas 
for the coefficients in $C^{(m)}_n(q,t)$ for all $m\geq 1$ and all $n\leq 4$.
We give analogous results for certain rational-slope $q,t$-Catalan polynomials. 
\keywords{$q,t$-Catalan polynomials \and joint symmetry \and lattice paths}
\end{abstract}

\section{Introduction}
\label{sec:intro}

\subsection{The $q,t$-Catalan Polynomials}
\label{subsec:intro-qtcat}

The \emph{$q,t$-Catalan polynomials} $C_n(q,t)$,
introduced by Garsia and Haiman~\cite{GH-qtcat} in 1996,
play a prominent role in combinatorics, symmetric function theory,
and algebraic geometry. These polynomials can be defined combinatorially
as follows. A sequence $\gamma=(\gamma_0,\gamma_1,\ldots,\gamma_{n-1})$
is called a \emph{Dyck word} if and only if $\gamma_0=0$, each
$\gamma_i\in\N=\{0,1,2,\ldots\}$, and $\gamma_i\leq \gamma_{i-1}+1$
for $1\leq i<n$. Let $\W_n$ be the set of Dyck words of length $n$.
For $\gamma\in \W_n$, define $\area(\gamma)=\sum_{i=0}^{n-1} \gamma_i$,
and define $\dinv(\gamma)$ to be the number of $(i,j)$ with
$0\leq i<j<n$ and $\gamma_i-\gamma_j\in\{0,1\}$. Then
\[ C_n(q,t)=\sum_{\gamma\in \W_n} q^{\area(\gamma)}t^{\dinv(\gamma)}. \]
For example, when $n=3$, $\W_3=\{(0,0,0), (0,0,1), (0,1,0), (0,1,1),
(0,1,2)\}$, and $C_3(q,t)=t^3+qt+qt^2+q^2t+q^3$.

Part of the interest of the $q,t$-Catalan polynomials is that there
are many different ways of defining $C_n(q,t)$; the equivalence
of these definitions is a deep result of algebraic combinatorics
due to Garsia, Haglund, and Haiman~\cite{haglund-book}. We have defined 
$C_n(q,t)$ as a weighted sum of Dyck words. Another combinatorial
formula, first proposed by Haglund~\cite{hagqtcatconj}, expresses
$C_n(q,t)$ as a sum of Dyck paths weighted by area and Haglund's
bounce statistic. Garsia and Haiman's original definition~\cite{GH-qtcat} 
presented $C_n(q,t)$ as a complicated sum of rational functions in $q$ and 
$t$ indexed by integer partitions of $n$ (see below for more details). 
The Garsia-Haiman formula
identifies $C_n(q,t)$ as the coefficient of the sign character
in $\nabla(e_n)$, where $e_n$ is an elementary symmetric polynomial
and $\nabla$ is the nabla operator of Bergeron and Garsia~\cite{BG-scifi}.
In turn, Haiman proved that $\nabla(e_n)$ is the Frobenius series
of the diagonal harmonics module in $2n$ variables~\cite{haiman-vanish}, 
so that $C_n(q,t)$ can be defined algebraically as the Hilbert series of the 
module of diagonal harmonic alternants. There are other geometric
manifestations of $C_n(q,t)$ involving Hilbert schemes~\cite{haiman-tqcat}
and, more recently, compactified Jacobians of plane curve 
singularities~\cite{GM1,GM2}.

\subsection{The Higher $q,t$-Catalan Polynomials}
\label{subsec:intro-higher-qtcat}

The \emph{higher $q,t$-Catalan polynomials} are generalizations
of the $q,t$-Catalan polynomials that depend on two integer parameters
$m$ and $n$; they reduce to ordinary $q,t$-Catalan polynomials when $m=1$.
We first review the combinatorial definition of the higher $q,t$-Catalan
polynomials given in~\cite{L-mcat}, which generalizes the formula
for $C_n(q,t)$ as a weighted sum of Dyck words. 
Fix an integer $m\geq 1$.  An \emph{$m$-Dyck word} is a sequence
$\gamma=(\gamma_0,\gamma_1, \ldots,\gamma_{n-1})$ such that $\gamma_i\in\N$,
$\gamma_0=0$, and $\gamma_i\leq \gamma_{i-1}+m$ for $1\leq i<n$.  
Denote by $W_n^{(m)}$ the set of $m$-Dyck words of length $n$.
For $\gamma\in W_n^{(m)}$, define $\area(\gamma)=\sum_{i=0}^{n-1} \gamma_i$,
and define $\dinv_m(\gamma)=\sum_{0\le i<j<n} \msc(\gamma_i-\gamma_j)$,
where 
$$ \msc(p)=
\left\{
 \begin{array}{ll}
   m+1-p, &\hbox{if $1\le p\le m$};\\
   m+p,  &\hbox{if $-m\le p\le 0$};\\
   0, &\hbox{for all other $p$}.
 \end{array}
 \right.
$$ 
Define $C^{(m)}_n(q,t)
=\sum_{\gamma\in\W_n^{(m)}} q^{\area(\gamma)}t^{\dinv_m(\gamma)}$. 
See~\cite{L-mcat} for an equivalent combinatorial definition
of $C^{(m)}_n(q,t)$ using $m$-Dyck paths weighted by area and
a suitable $m$-bounce statistic.

One can also give algebraic definitions of the higher
$q,t$-Catalan polynomials. However, for $m>1$, the algebraic
definitions are not yet known to be equivalent to the combinatorial
definitions. Thus we will use the notation $AC^{(m)}_n(q,t)$ to
denote the algebraic version of the higher $q,t$-Catalan polynomials.
These can be defined in terms of the nabla operator by setting
\[ AC^{(m)}_n(q,t)=\langle \nabla^m(e_n),s_{(1^n)}\rangle. \]
Garsia and Haiman~\cite{GH-qtcat} gave an explicit
formula for $AC^{(m)}_n(q,t)$ in their original paper on $q,t$-Catalan
polynomials, which we now describe.

Recall that a \emph{partition of $n$} is a sequence
$\mu=(\mu_1,\mu_2,\ldots,\mu_s)$ of positive integers 
with $\mu_1\geq\mu_2\geq\cdots\geq\mu_s$ and $\mu_1+\cdots+\mu_s=n$.  
Let $\Par(n)$ be the set of partitions of $n$.
For $\mu\in\Par(n)$, the \emph{diagram of $\mu$} is the set
\[ D(\mu)=\{(i,j)\in\N\times\N: 1\leq i\leq s,1\leq j\leq\mu_i\}, \]
which can be visualized as a set of left-justified cells
with $\mu_i$ squares in the $i$'th row from the top.
For example, $\mu=(4,4,3,1)$ is a partition in $\Par(12)$ with
\[ D(\mu)= \tableau{ 
{}&{}&{}&{}\\
{*}&{}&{}&{}\\
{}&{}&{}\\
{} }. \]
Fix a cell $c\in D(\mu)$.
The \emph{arm of $c$}, denoted $a(c)$, 
is the number of cells in $D(\mu)$ to the right of $c$ in the same row.
The \emph{coarm of $c$}, denoted $a'(c)$, 
is the number of cells in $D(\mu)$ to the left of $c$ in the same row.
The \emph{leg of $c$}, denoted $l(c)$, 
is the number of cells in $D(\mu)$ below $c$ in the same column.
The \emph{coleg of $c$}, denoted $l'(c)$, 
is the number of cells in $D(\mu)$ above $c$ in the same column.
In the example shown above, the cell $c=(2,1)$ (marked by an asterisk
in the figure) has $a(c)=3$, $a'(c)=0$, $l(c)=2$, and $l'(c)=1$.

The Garsia-Haiman formula for $AC^{(m)}_n(q,t)$ is a sum of rational
functions in $q$ and $t$ indexed by partitions of $n$, which is assembled
from the following ingredients. For each $\mu\in\Par(n)$, define:
\[ T_{\mu} = \prod_c [q^{a'(c)}t^{l'(c)}]; \qquad
 B_{\mu} = \sum_c q^{a'(c)}t^{l'(c)}; \qquad
\Pi_{\mu}= \prod_{c\neq (1,1)} (1-q^{a'(c)}t^{l'(c)}); \]
\[ w_{\mu} = \prod_c [(q^{a(c)}-t^{l(c)+1})(t^{l(c)}-q^{a(c)+1})]. \] 
All sums and products here range over cells $c$ in $D(\mu)$, except
the summation for $\Pi_{\mu}$ excludes the upper-left corner cell $c=(1,1)$.
Garsia and Haiman's original definition of the higher $q,t$-Catalan 
polynomials is:
\begin{equation}\label{eq:GH-formula}
AC^{(m)}_n(q,t)=\sum_{\mu\in\Par(n)} 
\frac{T_{\mu}^{m+1}(1-q)(1-t)B_{\mu}\Pi_{\mu}}{w_{\mu}}.
\end{equation}

The following conjecture has been open since approximately 2001.
(The $m=1$ case follows from the difficult theorem of Garsia, Haglund,
and Haiman mentioned above.)

\begin{conj}[Haiman/Loehr~\cite{L-mcat}]\label{conj:qtcat}
For all $m,n\in\N^+$, $$C^{(m)}_n(q,t)=AC^{(m)}_n(q,t).$$
\end{conj}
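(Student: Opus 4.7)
The plan is to reduce the conjecture to a higher-shuffle-type combinatorial identity via Haiman's eigenoperator theorem. Starting from
\[ AC^{(m)}_n(q,t)=\langle\nabla^m(e_n),s_{(1^n)}\rangle \]
and $\nabla\widetilde H_\mu=T_\mu\widetilde H_\mu$, one expands $e_n$ in the modified Macdonald basis and applies the explicit formula for $\langle\widetilde H_\mu,s_{(1^n)}\rangle$ in terms of arms and legs; this reproduces the Garsia-Haiman rational sum \eqref{eq:GH-formula} on the nose. The remaining task is to evaluate the same pairing combinatorially and match it to $C^{(m)}_n(q,t)$.

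The first step is to establish a ``higher shuffle'' formula
\[ \nabla^m(e_n)=\sum_P q^{\area(P)}t^{\dinv_m(P)}F_{\mathrm{ides}(P)}, \]
where $P$ runs over $m$-parking functions (labelings of the north steps of an $m$-Dyck path by $\{1,\dots,n\}$ satisfying the usual column-increasing rule), and $F_\alpha$ is Gessel's fundamental quasisymmetric function attached to the descent composition of the inverse reading word. This is the Haglund-Haiman-Loehr-Remmel-Ulyanov conjecture for $\nabla^m(e_n)$.

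The second step specializes to the sign character. Extracting the coefficient of $s_{(1^n)}$ forces $\mathrm{ides}(P)=(1,1,\dots,1)$, which in turn forces the reading word of $P$ to be strictly decreasing. A short check shows that such parking functions are in bijection with $m$-Dyck paths, equivalently with $m$-Dyck words $\gamma\in\W_n^{(m)}$, and that under this bijection the parking-function area reduces to $\sum_i\gamma_i$ while the parking-function dinv reduces to $\sum_{i<j}\msc(\gamma_i-\gamma_j)=\dinv_m(\gamma)$. Combining the two steps would yield $AC^{(m)}_n(q,t)=C^{(m)}_n(q,t)$ in full generality.

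The main obstacle is the first step. The $m=1$ case is the classical shuffle theorem, resolved by Carlsson and Mellit via the elliptic Hall algebra and a Dyck-path recursion; even the sign-character specialization, which is all we need for $m=1$, is due to Garsia, Haglund, and Haiman and already required substantial effort. For $m\ge 2$ one must identify $\nabla^m(e_n)$ with a specific element in the DAHA / elliptic Hall algebra action of Schiffmann-Vasserot, Gorsky-Negut, Mellit, and others, and then extract the combinatorial side via a plethystic reduction. A genuinely bijective route, closer in spirit to the present paper, would instead require a chain-dissection of $\W_n^{(m)}$ compatible with the partition indexing of \eqref{eq:GH-formula}, so that the contributions of the chains rooted at each $\mu$ telescope into $T_\mu^{m+1}(1-q)(1-t)B_\mu\Pi_\mu/w_\mu$. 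The methods developed here accomplish neither ingredient for $n\ge 5$, which is why we prove the conjecture only in the range $n\le 4$.
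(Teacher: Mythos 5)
There is a genuine gap: your argument is not a proof but a reduction of Conjecture~\ref{conj:qtcat} to a strictly stronger unproven statement. Your first step, the ``higher shuffle'' expansion of $\nabla^m(e_n)$ over $m$-parking functions, was (and in this paper's framework remains) a conjecture for $m\geq 2$; nothing in your write-up establishes it, and you say so yourself. Your second step, extracting the coefficient of $s_{(1^n)}$, is essentially a restatement of the conjecture being proved: the sign-character component of the $m$-shuffle formula \emph{is} the assertion $\langle\nabla^m(e_n),s_{(1^n)}\rangle=C^{(m)}_n(q,t)$, i.e.\ $AC^{(m)}_n(q,t)=C^{(m)}_n(q,t)$ via \eqref{eq:GH-formula}. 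So the chain of implications runs from an open statement to the target, and no new cases of the conjecture are obtained. In particular, your closing claim that this is ``why we prove the conjecture only in the range $n\le 4$'' borrows a result your argument never reaches: nothing in the proposal proves the conjecture for any $n\geq 3$, since the needed base case ($m=1$ sign character, or any $m\ge 2$ instance) is exactly what is at stake.

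The paper's actual proof of the $n\leq 4$ cases takes a completely different, elementary route that makes no use of $\nabla$-eigenoperators, Macdonald expansions, or parking functions. It first dissects $\W_n^{(m)}$ into $f$-chains (Sections~\ref{sec:chain-mcat} and~\ref{sec:proof-jsymm}), obtaining explicit closed formulas for the generating function $C_I(q,t)$ of the chain-initial words (e.g.\ \eqref{eq:newCIsum} for $n=4$). Then Lemma~\ref{lem:CI-to-CW} converts each chain into a geometric series, expressing $C^{(m)}_n(q,t)=C_W(q,t)$ as an explicit rational function of $q,t$. Finally, in Section~\ref{sec:proof-GH} this rational expression is matched term by term, after routine algebraic manipulation with the symmetrization map $\sigma$, against the Garsia-Haiman sum \eqref{eq:GH-formula} over the partitions of $n\leq 4$. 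If you want a proof in the paper's spirit, the missing ingredient is precisely such an explicit evaluation of both sides for small $n$ (or, for general $n$, a chain dissection whose chain contributions telescope into the $\mu$-indexed summands of \eqref{eq:GH-formula}); appealing to the conjectural $m$-shuffle formula supplies neither.
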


The first main goal of this paper
is to prove this conjecture for all $m\geq 1$ 
and all $n\leq 4$. Our proof is rather intricate, but it
requires only elementary combinatorial operations on $m$-Dyck words 
and algebraic manipulations of expressions involving $q$ and $t$.
The proof will evolve as a consequence of our combinatorial 
investigation of joint symmetry, which we describe next.

\subsection{Joint Symmetry}\label{subsec:intro-joint-symm}
One notable feature of the higher $q,t$-Catalan polynomials is 
the \emph{joint symmetry} $C_n^{(m)}(q,t)=C_n^{(m)}(t,q)$.  
The joint symmetry property for $AC_n^{(m)}(q,t)$
follows fairly easily from the Garsia-Haiman definition.
For, letting $\mu'$ denote the conjugate of the partition $\mu$,
it is immediate from the definitions that
$T_{\mu'}(q,t)=T_{\mu}(t,q)$, $B_{\mu'}(q,t)=B_{\mu}(t,q)$,
$\Pi_{\mu}(q,t)=\Pi_{\mu'}(t,q)$, and $w_{\mu'}(q,t)=w_{\mu}(t,q)$.
Joint symmetry of $AC_n^{(m)}$ now follows by replacing the
summation variable $\mu$ by $\mu'$ in~\eqref{eq:GH-formula}
and using the preceding identities.

On the other hand, the joint symmetry property for the 
combinatorially defined polynomials $C_n^{(m)}(q,t)$ is a great mystery.
It is an open problem to define a collection of involutions
$f:\W_n^{(m)}\rightarrow \W_n^{(m)}$ for all $m,n\in\N^+$, such that
$\area(f(\gamma))=\dinv_m(\gamma)$ and 
$\dinv_m(f(\gamma))=\area(\gamma)$ for all $\gamma\in \W_n^{(m)}$.
This problem is open even for $m=1$.
For all $m,n\in\N^+$, a bijection is known~\cite{haglund-book,L-mcat} 
proving the weaker \emph{univariate symmetry property} 
$C_n^{(m)}(q,1)=C_n^{(m)}(1,q)$.

The second main goal of this paper is to develop a combinatorial
framework for understanding the joint symmetry of $C^{(m)}_n(q,t)$
and the \emph{rational-slope $q,t$-Catalan polynomials} $C_{r,s,n}(q,t)$
(to be defined later).
We introduce a strategy for producing bijective proofs of joint symmetry
that involves dissecting a set of objects into a disjoint union of chains.
We show that if the set of initial points and the set of terminal points 
of all the chains are linked by a certain joint symmetry property,
then the generating function for the entire set is jointly symmetric.
We use our method to give a combinatorial proof of the joint
symmetry of $C^{(m)}_n(q,t)$ and $C_{r,s,n}(q,t)$ 
for triangles of height at most 4 
and all choices of the slope parameters $m$, $r$, and $s$. 
Gorsky and Mazin~\cite{GM2} recently gave a combinatorial
proof of joint symmetry for triangles of height 3 
using a different approach.
Our method has the added benefit of providing explicit formulas for all 
the coefficients of $C^{(m)}_n(q,t)$ for $n\leq 4$, as well as
providing the foundation for our proof of Conjecture~\ref{conj:qtcat}
for these values of $n$.  We also obtain some unimodality results
for the coefficient sequences obtained by looking at monomials
in the higher $q,t$-Catalan polynomials of a given total degree.
Many of the ingredients in our proof extend to triangles of arbitrary sizes,
although we cannot yet prove joint symmetry in full generality.

The rest of the paper is organized as follows.  
Section~\ref{sec:symm-chains} describes our general approach
of building chains and then passing from the $q,t$-symmetry of chain 
endpoints to $q,t$-symmetry of the full set of combinatorial objects.  
Section~\ref{sec:chain-mcat} defines a map 
$f_0$ that will be used to construct chains of $m$-Dyck words.  
Section~\ref{sec:proof-jsymm} proves joint symmetry 
of $C^{(m)}_n(q,t)$ for all $n\leq 4$.
Section~\ref{sec:proof-GH} proves Conjecture~\ref{conj:qtcat}
for all $n\leq 4$.  
Section~\ref{sec:rat-slope} gives the definition of 
rational slope $q,t$-Catalan polynomials and extends 
the previous constructions to this situation. We also
compare our method to the Gorsky-Mazin proof for triangles of height 3.
Finally, Section~\ref{sec:n=5} gives a conjectured chain map
for triangles of height 5 and further discussion of the challenges that arise
for larger $n$.

\section{Joint Symmetry via Chains}
\label{sec:symm-chains}

This section considers the following general situation.
We are given a finite set $W$ and two statistics
$a:W\rightarrow\N$ and $d:W\rightarrow\N$. For any
subset $S$ of $W$, define
\[ C_S(q,t)=\sum_{w\in S} q^{a(w)}t^{d(w)}. \]
We are also given a set $I$ of \emph{initial objects} in $W$,
a set $T$ of \emph{terminal objects} in $W$, and a bijection
$f:W\setminus T\rightarrow W\setminus I$ such that 
\[ a(f(w))=a(w)-1\mbox{ and }d(f(w))=d(w)+1
  \mbox{ for all $w\in W\setminus T$.} \]
Then $W$ is the disjoint union of \emph{$f$-chains},
each of which proceeds from an object in $I$ to an object in $T$.
More specifically, for each $w_0\in I$, there is an $f$-chain
\[ w_0 {\longrightarrow} w_1
{\longrightarrow} w_2
{\longrightarrow} \cdots
{\longrightarrow} w_k, \]
where $w_{i+1}=f(w_i)$ for $0\leq i<k$, and
$k\in\N$ is minimal such that $w_k\in T$.
(If $w_0\in I\cap T$, then $k=0$ and the $f$-chain consists of $w_0$ alone.)

\begin{thm}\label{thm:chain-symm}
With the above notation, if $C_T(q,t)=C_I(t,q)$, then $C_W(q,t)=C_W(t,q)$.
\end{thm}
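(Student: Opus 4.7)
The plan is to compute $C_W(q,t)$ by summing the contributions of the individual $f$-chains, express the result as a rational combination of $C_I(q,t)$ and $C_T(q,t)$, and then exploit the hypothesis $C_T(q,t)=C_I(t,q)$.

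First I would fix a single $f$-chain $w_0\to w_1\to\cdots\to w_k$ with $w_0\in I$ and $w_k\in T$. Writing $A=a(w_0)$ and $D=d(w_0)$, the defining properties of $f$ force $a(w_i)=A-i$ and $d(w_i)=D+i$, so $a(w_k)=A-k$ and $d(w_k)=D+k$. Summing a geometric series gives the chain's contribution to $C_W(q,t)$:
\[
\sum_{i=0}^{k} q^{A-i}t^{D+i}
= \frac{q^{A+1}t^{D} - q^{A-k}t^{D+k+1}}{q-t}
= \frac{q\cdot q^{a(w_0)}t^{d(w_0)} - t\cdot q^{a(w_k)}t^{d(w_k)}}{q-t}.
\]
Since the $f$-chains partition $W$, summing over all chains and using that $w_0$ ranges over $I$ while $w_k$ ranges over $T$ (each initial point lying on a unique chain with a unique terminal point) yields the key identity
\[
(q-t)\,C_W(q,t) = q\,C_I(q,t) - t\,C_T(q,t).
\]

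Next I would substitute $q\leftrightarrow t$ in this identity to obtain $(t-q)C_W(t,q) = t\,C_I(t,q) - q\,C_T(t,q)$, equivalently
\[
(q-t)\,C_W(t,q) = q\,C_T(t,q) - t\,C_I(t,q).
\]
By hypothesis $C_T(q,t)=C_I(t,q)$, which also gives $C_I(q,t)=C_T(t,q)$. Substituting these two equalities into the right-hand side shows $(q-t)C_W(t,q) = q\,C_I(q,t)-t\,C_T(q,t) = (q-t)C_W(q,t)$. Since $q-t$ is a nonzero polynomial and both sides lie in $\Z[q,t]$, cancellation (or equivalently, comparing coefficients after viewing both sides as polynomials in a common ring) gives $C_W(q,t)=C_W(t,q)$.

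There is no real obstacle here: the only substantive step is the telescoping/geometric-sum evaluation of each chain's contribution, and the rest is a symmetric manipulation of a single rational identity. The conceptual content of the theorem is the observation that a chain's generating function is fully determined by its two endpoints (up to the universal denominator $q-t$), so that $q,t$-symmetry of $W$ can be reduced to the much weaker statement of $q,t$-symmetry between the endpoint sets $I$ and $T$.
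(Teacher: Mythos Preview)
Your proof is correct and is essentially the paper's own algebraic proof: the identity $(q-t)C_W(q,t)=qC_I(q,t)-tC_T(q,t)$ is just a rearrangement of the paper's formula $C_W(q,t)=\dfrac{C_I(q,t)}{1-t/q}+\dfrac{C_T(q,t)}{1-q/t}$, obtained by the same chain-by-chain geometric sum, and the concluding symmetry manipulation is the same. The paper additionally supplies a bijective proof, but your argument matches its first (algebraic) proof.
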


The next two subsections give an algebraic proof and a bijective proof
of this theorem. 

\subsection{Algebraic Proof of Theorem~\ref{thm:chain-symm}.}
\label{subsec:alg-chain-symm}
The following lemma and its proof were communicated to us
 by Mikhail Mazin. Theorem \ref{thm:chain-symm} immediately follows from this lemma.
\begin{lem}\label{lem:CI-to-CW}
{\rm (a)} For any finite set $W$ with a chain map $f$, we have
$$
C_W(q,t)=\frac{C_I(q,t)}{1-t/q}
+\frac{C_T(q,t)}{1-q/t}.
$$
In particular, if $C_I(q,t)=C_T(q,t)$ then $C_W(q,t)=C_I(q,t)$.

{\rm (b)} Assume $C_T(q,t)=C_I(t,q)$. 
Then
\[ C_W(q,t)=\frac{C_I(q,t)}{1-t/q}+\frac{C_I(t,q)}{1-q/t}
  =\sigma\left(\frac{C_I(q,t)}{1-t/q}\right), \]
where  $\sigma:\Q(q,t)\rightarrow\Q(q,t)$ is the map given by
$\sigma(F(q,t))=F(q,t)+F(t,q)$.    
\end{lem}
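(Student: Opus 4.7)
The plan is to compute $C_W(q,t)$ one $f$-chain at a time and then sum. Since $f$ shifts statistics by $(a,d) \mapsto (a-1, d+1)$, the elements of a single chain $w_0 \to w_1 \to \cdots \to w_k$ (with $w_0 \in I$ and $w_k \in T$) satisfy $q^{a(w_i)} t^{d(w_i)} = q^{a(w_0)} t^{d(w_0)} (t/q)^i$. So the contribution of the chain to $C_W(q,t)$ is a finite geometric series in $t/q$, which I would first collapse via the standard identity $\sum_{i=0}^k x^i = (1 - x^{k+1})/(1 - x)$, valid as an equation in $\Q(q,t)$.

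Next I would split this closed form into an ``initial part'' depending only on $w_0$ and a ``terminal part'' depending only on $w_k$. The key algebraic observation is that $q^{a(w_0)} t^{d(w_0)} (t/q)^{k+1} = (t/q)\cdot q^{a(w_k)} t^{d(w_k)}$, and $-(t/q)/(1 - t/q) = 1/(1 - q/t)$. Putting these together, each chain contributes $q^{a(w_0)} t^{d(w_0)}/(1-t/q) + q^{a(w_k)} t^{d(w_k)}/(1-q/t)$. (The degenerate case $k = 0$, i.e.\ $w_0 = w_k \in I\cap T$, is consistent thanks to the identity $1/(1-t/q) + 1/(1-q/t) = 1$.)

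Summing over all $f$-chains then yields part (a): because $W$ is the disjoint union of the chains and each chain has a unique initial point in $I$ and a unique terminal point in $T$, the two pieces aggregate cleanly to $C_I(q,t)/(1-t/q) + C_T(q,t)/(1-q/t)$. The ``in particular'' clause follows at once from the same identity $1/(1-t/q) + 1/(1-q/t) = 1$.

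For part (b), I would substitute the hypothesis $C_T(q,t) = C_I(t,q)$ into the formula from (a) to get the first equality, and then verify the second by observing that, by definition of $\sigma$ and because swapping $q$ and $t$ sends $1/(1-t/q)$ to $1/(1-q/t)$, applying $\sigma$ to $C_I(q,t)/(1-t/q)$ produces precisely $C_I(q,t)/(1-t/q) + C_I(t,q)/(1-q/t)$. No real obstacle arises: the heart of the argument is the one-chain telescoping identified above, and the rest is formal manipulation in $\Q(q,t)$.
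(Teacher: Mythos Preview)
Your proposal is correct and follows essentially the same approach as the paper: compute the contribution of each $f$-chain as a finite geometric series in $t/q$, split the closed form into an initial-point term over $1-t/q$ and a terminal-point term over $1-q/t$, and sum over chains. You spell out a few more details (the explicit identity $-(t/q)/(1-t/q)=1/(1-q/t)$ and the $k=0$ check), but the argument is the same.
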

\begin{proof}
{\rm (a)} For each $f$-chain $C_i$:
$w_0^i {\longrightarrow} w_1^i
{\longrightarrow} w_2^i
{\longrightarrow} \cdots
{\longrightarrow} w_{k_i}^i$,
apply the formula for geometric progressions to obtain
$$
C_{C_i}(q,t)=\frac{C_{\{w_0^i\}}(q,t)}{1-t/q}
-\frac{C_{\{w_{k_i}^i\}}(q,t)\cdot t/q}{1-t/q}
=\frac{C_{\{w_0^i\}}(q,t)}{1-t/q}
+\frac{C_{\{w_{k_i}^i\}}(q,t)}{1-q/t}.
$$
After summing up over all chains, one gets
$$
C_W(q,t)=\frac{C_I(q,t)}{1-t/q}
+\frac{C_T(q,t)}{1-q/t}.
$$

{\rm (b)} is immediate from (a).
\end{proof}

\begin{rem}\label{rem:coefficient}
Define $|C_I(q,t)|^{\deg=j+k}_{\qdeg\ge j}$ to be the number of 
terms in the sum defining $C_I(q,t)$ whose total degree is $j+k$ and 
whose $q$-degree is at least $j$, and define  $|C_I(q,t)|^{\deg=j+k}_{\tdeg> j}$ similarly. From Lemma \ref{lem:CI-to-CW} (a), 
$$
C_W(q,t)=\frac{C_I(q,t)}{1-t/q}
-\frac{C_T(q,t)\cdot t/q}{1-t/q}.
$$
Using power series expansion we conclude that the coefficient of $q^jt^k$ in $C_W(q,t)$ is  
$$\Big|C_I(q,t)\Big|^{\deg=j+k}_{\qdeg\ge j}-\Big|C_T(q,t)\Big|^{\deg=j+k}_{\qdeg> j}.$$
(A more explanatory proof: the coefficient is equal to the number of chains
with the same total degree starting ``weakly before" the monomial
minus the number of such chains ending ``strictly before" the monomial.)
In particular if $C_T(q,t)=C_I(t,q)$, then the coefficient is
$$\Big|C_I(q,t)\Big|^{\deg=j+k}_{\qdeg\ge j}-\Big|C_I(q,t)\Big|^{\deg=j+k}_{\tdeg> j}.$$
\end{rem}

\subsection{Bijective Proof of Theorem~\ref{thm:chain-symm}.}
\label{subsec:bij-chain-symm}

We now show that any bijective proof of the hypothesis
 $C_T(q,t)=C_I(t,q)$ of Theorem~\ref{thm:chain-symm}
can be converted to a bijective proof of the conclusion
 $C_W(q,t)=C_W(t,q)$. More specifically,
assume that we are given a bijection $h:T\rightarrow I$ 
such that $a(h(w))=d(w)$ and $d(h(w))=a(w)$ for all $w\in T$.  
We will use $h$ and $f$ to build a canonical
involution $J:W\rightarrow W$ such that $a(J(w))=d(w)$ 
and $d(J(w))=a(w)$ for all $w\in W$. 

First, since $f:W\setminus T\rightarrow W\setminus I$ and $h:T\rightarrow I$
are bijections, $f\cup h$ is a bijection from $W$ to $W$ 
(here we are viewing functions as sets of ordered pairs).
The \emph{digraph} of $f\cup h$ is the directed graph $G$ with
vertex set $W$ and directed edges $w\rightarrow f(w)$ for all $w\in W\setminus T$
and $w\rightarrow h(w)$ for all $w\in T$. Because $f\cup h$ is a bijection,
$G$ is a disjoint union of directed cycles. 

To build the involution $J$, we perform the following construction
on each directed cycle $C$ in $G$ (see the examples below for illustrations).
If all vertices $w$ on $C$ satisfy $a(w)=d(w)$ --- which must occur
when $C$ consists of just one vertex --- define $J(w)=w$ for all such $w$.
For all other $C$, we will create a drawing of the cycle $C$
in the first quadrant of the $xy$-plane, as follows. First observe
that the value of $a(w)+d(w)$ for all vertices $w$ on $C$ is constant,
because of the properties of $f$ and $h$. In our drawing of $C$,
each $w\in C$ will be drawn at a lattice point $(x(w),y(w))$ such that:
$y(w)=|a(w)-d(w)|$; the lattice point for $w$ is colored black
if $a(w)-d(w)>0$; and the lattice point for $w$ is colored white
if $a(w)-d(w)<0$. (When $y(w)=0$, the lattice point for $w$ may be black 
or white.)

To create the drawing, pick any $w_0\in I\cap C$ that maximizes $a(w)-d(w)$,
and draw a black dot for $w_0$ at $(0,a(w_0)-d(w_0))$. 
One may routinely check that $a(w_0)-d(w_0)>0$.  Let the 
distinct vertices on the cycle $C$ (in order) be $w_0,w_1,\ldots,w_k$. Assume 
by induction, for some fixed $i<k$, that we have already drawn dots for
$w_0,w_1,\ldots,w_i$ at respective coordinates $(0,y_0)$, $(1,y_1)$, $\ldots$, 
$(i,y_i)$. To continue the drawing, consider various cases.
\begin{itemize}
\item Case~1: $w_i\not\in T$, so $w_{i+1}=f(w_i)$.
\begin{itemize}
\item Case~1a: $(i,y_i)$ is a black dot and $y_i>1$.
 Draw a black dot for $w_{i+1}$ at $(i+1,y_i-2)$.
\item Case~1b: $(i,y_i)$ is a black dot and $y_i=1$.
 Draw a white dot for $w_{i+1}$ at $(i+1,1)$.
\item Case~1c: $(i,y_i)$ satisfies $y_i=0$.
 Draw a white dot for $w_{i+1}$ at $(i+1,2)$.
\item Case~1d: $(i,y_i)$ is a white dot.
 Draw a white dot for $w_{i+1}$ at $(i+1,y_i+2)$.
\end{itemize}
\item Case~2: $w_i\in T$, so $w_{i+1}=h(w_i)\in I$.
 Draw a dot for $w_{i+1}$ of the opposite color as the dot for $w_i$
 at $(i+1,y_i)$.  
\end{itemize}
In the examples pictured below, we also draw line segments between successive
dots to help visualize the cycle. One sees that dots are black when
the line segments are moving down, and dots are white when the 
line segments are moving up. Color changes occur at horizontal line
segments and also due to reflection off the bottom boundary $y=0$.

One may now check (by induction on $i$) that the properties
stated earlier regarding $y(w_i)$ and the color of the dot for $w_i$
are indeed true. One should further check that the dot for $w_k$
is a white dot at $y$-coordinate $y_k=y_0$ (since $h(w_k)=w_0$) 
and that no intermediate $y$-coordinate
$y_i$ exceeds $y_0$.  Finally, for each black dot $(i,y_i)$ in 
the drawing with $y_i>0$, there is a unique leftmost white dot $(j,y_i)$ at 
the same level (with $j>i$) that can be found pictorially by moving due east 
from $(i,y_i)$ until hitting the next dot.  
Define $J(w_i)=w_j$ and $J(w_j)=w_i$ for each such ``matched pair'' of a 
black dot and a later white dot.  
Define $J(w)=w$ for any $w$ with $y(w)=0$ 
(which holds if and only if $a(w)=d(w)$).
The properties of heights and colors of dots show that $J$ interchanges
$a$ and $d$.  Moreover, $J$ does not depend on the initial choice of $w_0$, 
since choosing a different $w_0'\in I\cap C$ maximizing $a(w)-d(w)$ will 
produce a cyclically shifted version of the original drawing with the same
matchings of black dots to white dots.  In this sense, $J$ is canonically
determined from the given maps $f$ and $h$.

\begin{example}
Let $W=\{1,2,\ldots,15\}$, and define $a,d,f,h$ as shown in this table:
\[  \begin{array}{|c|r|r|r|r|r|r|r|r|r|r|r|r|r|r|r|}
  \hline    w & 1 & 2 & 3 & 4 & 5 & 6 & 7 & 8 & 9 & 10 & 11 & 12 & 13 & 14 & 15
\\\hline
         a(w) & 7 & 6 & 5 & 6 & 5 & 4 & 3 & 2 & 1 & 3  &  2 &  7 &  6 &  2 &  1
\\ d(w) & 1 & 2 & 3 & 2 & 3 & 4 & 5 & 6 & 7 & 5  &  6 &  1 &  2 &  6 &  7
\\ a(w)-d(w) 
              & 6 & 4 & 2 & 4 & 2 & 0 &-2 &-4 &-6 &-2  & -4 &  6 &  4 & -4 & -6
\\ f(w) & 2 & 3 & - & 5 & 6 & 7 & 8 & 9 & - & 11 &  - & 13 &  - & 15 & -
\\ h(w) & - & - & 10& - & - & - & - & - & 1 &  - &  4 &  - & 14 & -  & 12
\\\hline
\end{array} \]
Note $I=\{1,4,10,12,14\}$ and $T=\{3,9,11,13,15\}$.
The left side of Figure~\ref{fig:chbij1} shows the $f$-chains drawn
vertically, with each $w\in W$ drawn at height $a(w)-d(w)$.
On the right of the figure, we draw the two cycles of $f\cup h$
in the first quadrant as described above. The horizontal arrows
indicate the action of $J$, namely:
\[ J:\ \ 
   1\leftrightarrow 9,\ \ 
   2\leftrightarrow 11,\ \ 
   3\leftrightarrow 10,\ \ 
   4\leftrightarrow 8,\ \ 
   5\leftrightarrow 7,\ \ 
   6\leftrightarrow 6,\ \ 
   12\leftrightarrow 15,\ \ 
   13\leftrightarrow 14. \]
\begin{center}
\begin{figure}
\epsfig{file=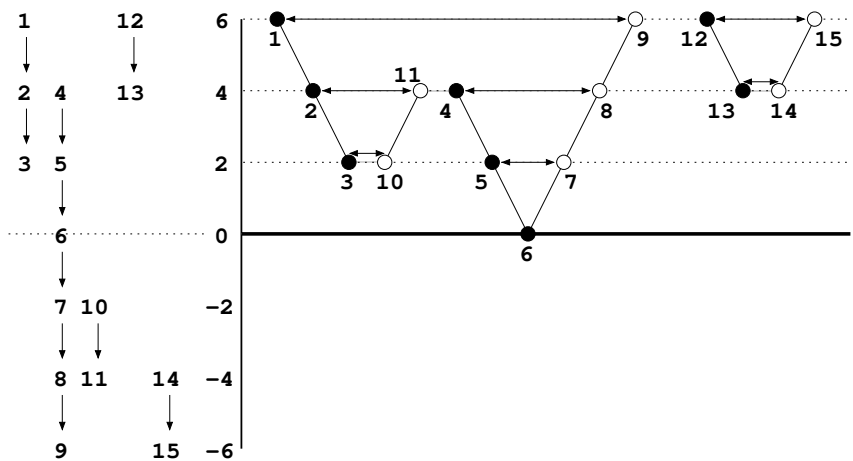,scale=0.75}
\caption{Example of the construction of $J$.}
\label{fig:chbij1}
\end{figure}
\end{center}
\end{example} 

\begin{example}
Figure~\ref{fig:chbij2} gives another example of the construction
where $a(w)-d(w)$ is odd for all $w$. Here $W=\{1,2,\ldots,18\}$,
the $f$-chains are shown on the left of the figure,
$I=\{1,5,10,15\}$, $T=\{4,9,14,18\}$, and
$h:T\rightarrow I$ sends $4$ to $15$, $9$ to $1$, 
$14$ to $5$, and $18$ to $10$. We picked $w_0=1$,
but choosing $w_0=10$ instead would lead to the same $J$.  
\begin{center}
\begin{figure}
\epsfig{file=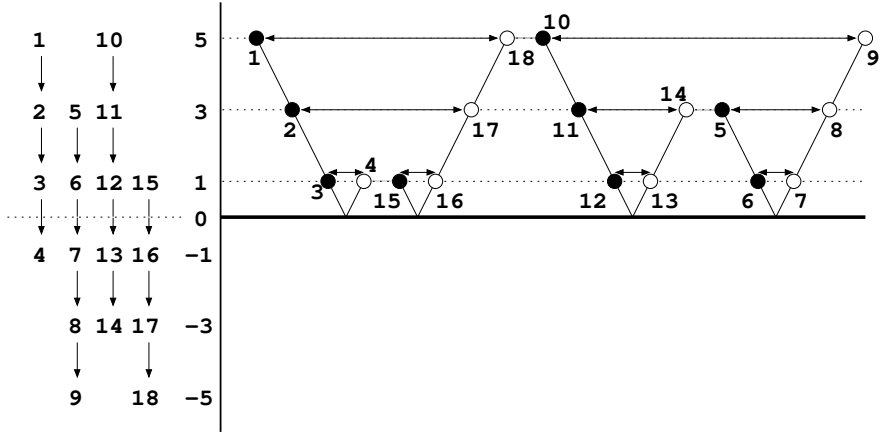,scale=0.75}
\caption{Another example of the construction of $J$.}
\label{fig:chbij2}
\end{figure}
\end{center}
\end{example}

\begin{rem}\label{rem:reattach-chains}
In the situation where $a(w)\geq d(w)$ for all $w\in I$,
one can use the $f$-chains to give a simpler bijective proof
of joint symmetry. For, in this situation, every $f$-chain
must touch the ``midline'' where $a(w)-d(w)=0$ (shown as a 
dotted line on the left in Figures~\ref{fig:chbij1} and~\ref{fig:chbij2}).
We can break all the $f$-chains at the midline and rearrange
the bottom halves (using the bijection $h:T\rightarrow I$)
to create new chains, each of which is symmetric about the midline.
More precisely, the top half of the $f$-chain starting at $h(w)$
is reattached at the midline to the bottom half of the $f$-chain ending 
at $w$, for all $w\in T$. For instance, in Figure~\ref{fig:chbij2},
the new chains would be:
\begin{eqnarray*}
1\longrightarrow 
2\longrightarrow 
3\longrightarrow 
7\longrightarrow 
8\longrightarrow 
9, & &
15\longrightarrow 4, \\ 
10\longrightarrow 
11\longrightarrow 
12\longrightarrow 
16\longrightarrow 
17\longrightarrow 
18, & &
5\longrightarrow 
6\longrightarrow 
13\longrightarrow 
14.  
\end{eqnarray*}
For these new symmetric chains, we can define $J:W\rightarrow W$ by
sending each $w\in W$ to the unique object $w'$ on the same chain as $w$
that has $a(w')=d(w)$ and $d(w')=a(w)$.  
\end{rem}

\begin{rem} Mikhail Mazin gives another combinatorial proof of Theorem \ref{thm:chain-symm} and we briefly describe it here. Note that it does not give a canonical bijection as above.

 First of all, one can assume that $W$ is homogeneous, i.e. the total degree $a(w)+d(w)$ is fixed for $w\in W$. Furthermore, we can think of $W$ as a multiset of integers $d(w)-a(w)$.  Then it suffices to prove the following assertion.
 
\emph{Assertion}: if $W$ is a multiset of integers with a partition into chains where the chain map adds 2, and $I=-T$ where $I$ is the multiset of initial numbers of these chains and $T$ is the multiset of the terminal numbers of them, then we assert that $W=-W$, i.e. $W$ is symmetric.

We say that two chains are overlapping if they have common elements and neither one contains the other. It is easy to adjust the chains into non-overlapping ones. 
We claim that all chains are either symmetric or come in couples: $i\to\cdots\to j$ and $-j\to\cdots\to -i$, therefore $W=-W$.
To prove the claim, take a chain $i\to\cdots\to j$ of minimal length. Then there are no initial and no terminal numbers between $i$ and $j$. Note that $-i$ is terminal and $-j$ is initial by the symmetry $I=-T$. Therefore, there are two possibilities: 
(a) if $i<0<j$, then $i=-j$ and the chain is symmetric; 
(b) if not, then there exists a chain which starts at $-j$ and ends at $-i$, otherwise the chain starting at $-j$ and the one ending at $-i$ are overlapping.
The claim easily follows from here.
\end{rem}

\section{Chain Maps for Higher $q,t$-Catalan Polynomials}
\label{sec:chain-mcat}

\subsection{The Chain Conjecture for $m$-Dyck Words}
\label{subsec:chain-conj}

Throughout this section, we fix $m,n\in\N^+$ and let $W=W_n^{(m)}$.
We propose to prove the joint symmetry of $C^{(m)}_n(q,t)$ 
using the methods of Section~\ref{sec:symm-chains}. More precisely, 
we make the following conjecture.

\begin{conj}\label{reduced_conj}\label{conj:reduced}
Let $T=T_n^{(m)}$ be the set of $m$-Dyck words of length $n$ with $\gamma_1=0$, 
namely, $T=\{(0,0,\gamma_2, \ldots,\gamma_{n-1})\in W\}$. 
There exists a set $I=I_n^{(m)}\subseteq W$ and a bijection
$f:W\setminus T\rightarrow W\setminus I$ such that 
\begin{equation}\label{eq:stat-effect}
 \area(f(w))=\area(w)-1\mbox{ and }\dinv_m(f(w))=\dinv_m(w)+1
  \mbox{ for all $w\in W\setminus T$,} 
\end{equation} 
and $C_T(q,t)=C_I(t,q)$ (using the statistics $\area$ and $\dinv_m$).  
\end{conj}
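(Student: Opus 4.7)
The plan is to construct the chain map $f$ explicitly, identify $I$ combinatorially as the complement of its image, and then exhibit a statistic-swapping bijection $h : T \to I$.

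First, I would define $f : W \setminus T \to W$ by a deterministic local rule that decreases $\area$ by one. The most natural candidate is greedy: given $\gamma \in W \setminus T$ (so $\gamma_1 \ge 1$), decrement $\gamma$ at the earliest index $k \ge 1$ for which the decrement preserves the $m$-Dyck condition. The area condition is then immediate, and $\dinv_m(f(\gamma)) = \dinv_m(\gamma) + 1$ reduces to a sum of $\msc$-differences supported on the pairs involving position $k$, analyzed using the tent shape of $\msc$. However, small cases (e.g.\ $\gamma = (0,1,2)$ with $m = 2$, $n = 3$) already show that for certain $\gamma$ no local decrement produces exactly $+1$ in $\dinv_m$: there, decrementing at position $1$ gives $0$ and at position $2$ gives $+2$. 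The rule for $f$ must therefore admit occasional nonlocal rearrangements (e.g.\ pulling down a later coordinate by $2$ while pushing up an earlier one by $1$, as in the move $(0,1,2) \mapsto (0,2,0)$). The first major technical step is to design $f$ carefully and prove it is injective with the correct statistic behavior.

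Second, I would identify $I = W \setminus f(W \setminus T)$. Direct computation for $n = 3$ with $m \in \{1, 2\}$ suggests the clean characterization
\[
I = \{\gamma \in W : \gamma_{n-1} = \gamma_{n-2} + m\},
\]
the $m$-Dyck words whose final step is a maximal rise. This is dual to $T$, defined by a minimal first step. The equality $I = W \setminus f(W \setminus T)$ should follow from characterizing when the inverse rule for $f$ fails to produce an element of $W \setminus T$.

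Third, and most delicately, I would prove $C_T(q,t) = C_I(t,q)$ by constructing a bijection $h : T \to I$ with $\area(h(w)) = \dinv_m(w)$ and $\dinv_m(h(w)) = \area(w)$. The sets $T$ and $I$ are each indexed by $W_{n-1}^{(m)}$ --- by deleting the redundant leading $0$ in the $T$-case and the maximal final rise in the $I$-case --- so $h$ is really a transformation of length-$(n-1)$ $m$-Dyck words that interchanges $\area$ with a suitable modification of $\dinv_m$. Experimental evidence for $n = 3$ points to a reflection-type formula such as $(0, 0, \gamma_2) \mapsto (0, m - \gamma_2, 2m - \gamma_2)$, which should admit a reasonable generalization to $n = 4$ and be verified case-by-case there.

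The hardest step is the third one. A clean induction using joint symmetry of $C^{(m)}_{n-1}(q,t)$ is obstructed because the prefix $(0,0)$ of a word in $T$ and the maximal-rise suffix of a word in $I$ each contribute a $\dinv_m$-correction that depends globally on the remainder of the word, so the inductive hypothesis does not apply directly. For $n \le 4$ the corrections are few enough to control explicitly, which is presumably how the authors prove the conjecture in those cases. A uniform proof for all $n$ would likely require either iterating Theorem~\ref{thm:chain-symm} through nested chain decompositions or finding a more structural description of $h$ that aligns the two sets of dinv corrections automatically.
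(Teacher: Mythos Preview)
Your outline has two concrete gaps that would derail the argument at $n=4$.

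\textbf{The chain map.} A local decrement rule is not the right mechanism. The paper's map $f_0$ is a \emph{rotate-and-decrement}: with $r=r(\gamma)$ the least index $\ge 2$ satisfying $\gamma_r-\gamma_{r-2}\le m$ (or $r=n$), one deletes the entry $\gamma_{r-1}$, shifts the tail left, and appends $\gamma_{r-1}-1$ at the end. Your example $(0,1,2)\mapsto(0,2,0)$ for $m=2$ is exactly this operation, but you treat it as an ad hoc fix rather than the uniform rule. Moreover, for $n=4$ even $f_0$ does not cover all of $W\setminus T$: one must glue on a second piece $f_1$ defined on $A_1=\{\gamma:\gamma_2-\gamma_3>m+1\}$ by $f_1(0,\gamma_1,\gamma_2,\gamma_3)=(0,\gamma_3+1,\gamma_1-1,\gamma_2-1)$, and then check $B_0\cap B_1=\emptyset$. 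Your proposal does not anticipate this.

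\textbf{The set $I$.} Your conjecture $I=\{\gamma:\gamma_{n-1}=\gamma_{n-2}+m\}$, extrapolated from $n=3$, fails at $n=4$ in both directions. For $m=1$, the word $(0,0,1,1)$ lies in the true $I$ (it belongs to the paper's $D_2$) but has $\gamma_3-\gamma_2=0\neq m$; conversely $(0,1,0,1)$ has $\gamma_3=\gamma_2+m$ yet lies in $B_0$, hence not in $I$. The actual $I$ for $n=4$ is a union of three pieces $D_1\cup D_2\cup D_3$ with genuinely different shapes, and the proof of $C_T(q,t)=C_I(t,q)$ proceeds not by a single bijection $h$ but by building \emph{secondary} chain maps $g$ on $D_1\cup D_2$ and $g'$ on $D_1'\cup D_2'$ (with steps $(\area,\dinv_m)\mapsto(\area\mp2,\dinv_m\pm1)$) and computing both generating functions explicitly as the lattice points of a common triangle. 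Your reflection formula $(0,0,\gamma_2)\mapsto(0,m-\gamma_2,2m-\gamma_2)$ does give a valid $h$ for $n=3$, but since the description of $I$ it presupposes is already wrong at $n=4$, the generalization you envisage cannot go through as stated.
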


We will prove this conjecture for all $m\geq 1$ and all $n\leq 4$.
Using Theorem~\ref{thm:chain-symm}, we can then conclude that
$C^{(m)}_n(q,t)=C^{(m)}_n(t,q)$ for these choices of $m$ and $n$.
There are two main difficulties in proving the conjecture for larger $n$.
First, even if we can guess what the map $f$ should be (as we can
for $n=5$), it may be hard to characterize the set $I$ of initial objects 
for the $f$-chains and hence to prove $C_T(q,t)=C_I(t,q)$.  
Second, it does not seem possible to give a single unified formula
for the map $f$. Even for $n=4$, we will need to build $f$ by pasting 
together ``partial chain maps'' $f_0$ (defined in \S\ref{subsec:f0}) 
and $f_1$ (defined in \S\ref{subsec:prove-n=4}) defined on
domains smaller than $W\setminus T$. 

\subsection{The Partial Chain Map $f_0$.}
\label{subsec:f0}

This subsection defines, for any fixed $n$ and $m$,
a set $A_0\subseteq W\setminus T$,
a set $B_0\subseteq W$, and a bijection $f_0:A_0\rightarrow B_0$
that satisfies the formulas in~\eqref{eq:stat-effect} for 
all $w\in A_0$. Informally speaking, $f_0$ is the ``default version''
of the chain map $f$, but it can only be applied to the
$m$-Dyck words in $A_0$. For $n<4$, $A_0$ is the whole set $W\setminus T$, so $f_0$ is already
sufficient to build all the chains needed to prove joint symmetry (see \S\ref{subsec:prove-n=2} and \S\ref{subsec:prove-n=3}). 
For $n=4$, however, we will need to glue $f_0$ with another map $f_1$ as discussed in \S\ref{subsec:prove-n=4}.

\begin{defn}\label{defn:f0} 
For any $\gamma\in W$, let 
$r=r(\gamma)$ be the minimum index $i\in\{2,3,\ldots,n-1\}$
with $\gamma_i-\gamma_{i-2}\leq m$, or $n$ if no such index exists.
Let $A_0$ be the set of $\gamma\in W$ such that
$\gamma_{r-1}-1\leq \gamma_{n-1}+m$ and $\gamma_1>0$
(note $A_0\subseteq W\setminus T$).
Define a map $f_0$ with domain $A_0$ by
$$f_0(\gamma)=(\gamma_0,\gamma_1,\dots,\gamma_{r-2},
\gamma_r,\gamma_{r+1},\dots,\gamma_{n-1},\gamma_{r-1}-1)\in W, \quad \forall \gamma\in A_0.$$
\end{defn}
We assert that $\gamma_{r-1}>0$ if and only if 
$\gamma\in W\setminus T$ (i.e., $\gamma_1>0$).
Indeed,  the equivalence is obvious in the case $r=2$ since $\gamma_{r-1}=\gamma_1$. 
For the case $r\ge3$, we have both $\gamma_{r-1}>0$ and $\gamma_1>0$: indeed,
by the definition of $r$ we have
$\gamma_{r-1}>\gamma_{r-3}+m>0$, while $\gamma_1\ge \gamma_2-m>\gamma_0+m-m=0$.

\begin{lem}\label{lem:f0-dinv} For all $\gamma\in A_0$,
$$\area(f_0(\gamma))=\area(\gamma)-1 \text{ and }
\dinv_m(f_0(\gamma))=\dinv_m(\gamma)+1.$$
\end{lem}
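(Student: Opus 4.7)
The area identity is immediate: the entries of $f_0(\gamma)$ are a permutation of those of $\gamma$ with one copy of $\gamma_{r-1}$ replaced by $\gamma_{r-1}-1$, so $\area(f_0(\gamma))=\area(\gamma)-1$. All of the work is in the $\dinv_m$ statement, and the plan is to view $f_0$ as deleting the entry $v:=\gamma_{r-1}$ from position $r-1$, left-shifting the entries at positions $r,r+1,\ldots,n-1$ by one, and then appending $v-1$ in the new position $n-1$. One then compares $\dinv_m(\gamma)$ and $\dinv_m(f_0(\gamma))$ pair by pair.

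I would first dispose of the pairs that involve neither of the special positions. Any pair $(i,j)$ with $0\le i<j<n$ and $i,j\ne r-1$ corresponds to a pair in $f_0(\gamma)$ (indices larger than $r-1$ shifted down by one) whose two entries coincide with those of $(i,j)$, so these contributions match exactly. The remaining pairs involve position $r-1$ in $\gamma$ or position $n-1$ in $f_0(\gamma)$, and to handle them the key ingredient is the identity
\[
\msc(1-q)=\msc(q)\qquad\text{for every } q\in\Z,
\]
which I would verify by a short case check from the piecewise definition of $\msc$. Applying it with $q=v-\gamma_k$ for each $k\in\{r,\ldots,n-1\}$ cancels the contribution of the pair $(r-1,k)$ in $\gamma$ against that of the pair $(k-1,n-1)$ in $f_0(\gamma)$, leaving
\[
\dinv_m(f_0(\gamma))-\dinv_m(\gamma)=\sum_{i=0}^{r-2}\bigl[\msc(\gamma_i-v+1)-\msc(\gamma_i-v)\bigr].
\]

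It remains to show this sum equals $1$. Writing $\Delta(p):=\msc(p+1)-\msc(p)$, a routine case analysis from the definition of $\msc$ gives $\Delta(p)=+1$ when $-m\le p\le -1$, $\Delta(p)=-1$ when $1\le p\le m$, and $\Delta(p)=0$ otherwise; so I would aim to prove that the $i=r-2$ summand contributes $+1$ while every other summand vanishes. The $i=r-2$ lower bound $p_{r-2}\ge-m$ is just the Dyck inequality $\gamma_{r-1}\le\gamma_{r-2}+m$. For the upper bound $p_{r-2}\le-1$, I would separate the case $r=2$ (where $p_0=-\gamma_1\le-1$ because $\gamma\in W\setminus T$) from the case $r\ge 3$, in which the minimality of $r$ gives $\gamma_{r-1}>\gamma_{r-3}+m\ge\gamma_{r-2}$ via $\gamma_{r-2}\le\gamma_{r-3}+m$.

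The main obstacle is bounding $\gamma_i$ from above for $0\le i\le r-3$ so as to force $p_i\le -m-1$. The plan is to use the minimality of $r$ once more: for every $j\in\{2,\ldots,r-1\}$ we have $\gamma_{j-2}\le\gamma_j-(m+1)$. Iterating this relation downward by twos, starting from $\gamma_{r-1}=v$ when $r-1-i$ is even and from the bound $\gamma_{r-2}\le v-1$ already obtained when $r-1-i$ is odd, yields $\gamma_i\le v-(m+1)$ in both parities. I expect that making this parity-sensitive backward induction precise is the most delicate bookkeeping in the argument; once it is done, every summand other than $i=r-2$ is forced to zero and the total equals $+1$, completing the proof.
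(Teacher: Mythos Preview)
Your proof is correct and follows essentially the same approach as the paper's: both use the identity $\msc(1-q)=\msc(q)$ to cancel the contributions from indices $k\geq r$, reduce to the sum over $i\leq r-2$, and then show the $i=r-2$ term contributes $+1$ while the others vanish via $\gamma_{r-1}-\gamma_i>m$. The only cosmetic difference is that the paper dispatches your ``parity-sensitive backward induction'' in a single line, writing $\gamma_i+m<\gamma_{i+2}<\gamma_{i+4}<\cdots<(\gamma_{r-2}\text{ or }\gamma_{r-1})\leq\gamma_{r-1}$, which is exactly your chain without separating the parities explicitly.
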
 
\begin{proof}
The first equality is immediate.  
By definition of $\dinv_m$,
\small
\begin{eqnarray*}
\dinv_m(\gamma) &=& 
\sum_{\substack{i<j \\ i\neq r-1\neq j}} \msc(\gamma_i-\gamma_j)
+\sum_{i<r-1} \msc(\gamma_i-\gamma_{r-1})
+\sum_{r-1<i} \msc(\gamma_{r-1}-\gamma_i), \\
\dinv_m(f_0(\gamma)) &=&
\sum_{\substack{i<j \\ i\neq r-1\neq j}} \msc(\gamma_i-\gamma_j)+
\sum_{i\neq r-1} \msc(\gamma_i-\gamma_{r-1}+1).
\end{eqnarray*}
\normalsize
Let us simplify the second summation in the second equation.
Since $\msc(x)=\msc(1-x)$ for any $x\in\mathbb{R}$, this summation equals
$\sum_{i\neq r-1} \msc(\gamma_{r-1}-\gamma_i)$. 
Thus to show $\dinv_m(f_0(\gamma))=\dinv_m(\gamma)+1$, we need only show
$\sum_{i<r-1} \msc(\gamma_{r-1}-\gamma_i)
=1+\sum_{i<r-1} \msc(\gamma_i-\gamma_{r-1})$.  
Since $\gamma_{r-2}-\gamma_{r-3}\le m$ and $\gamma_{r-1}-\gamma_{r-3}>m$,  
we have $\gamma_{r-2}< \gamma_{r-1}$ (note that the inequality holds if $r=2$
since $\gamma\not\in T$).  For every $i<r-2$, 
$$\gamma_i+m<\gamma_{i+2}<\gamma_{i+4}<\cdots<(\gamma_{r-2} \textrm{  or } \gamma_{r-1})\le\gamma_{r-1},$$
therefore $\gamma_{r-1}-\gamma_i>m$, and 
$$\msc(\gamma_{r-1}-\gamma_i)=0=\msc(\gamma_{i}-\gamma_{r-1}).$$ 
For $i=r-2$, $0< \gamma_{r-1}-\gamma_{r-2}\leq m$, hence
$$\msc(\gamma_{r-1}-\gamma_{r-2})=\msc(\gamma_{r-1}-\gamma_{r-2}+1)+1=\msc(\gamma_{r-2}-\gamma_{r-1})+1.$$
Summing up, we conclude that
$$\dinv_m(f_0(\gamma))=1+\sum_{i<j} \msc(\gamma_i-\gamma_j)=\dinv_m(\gamma)+1.
\qedhere $$
\end{proof}

\begin{defn}\label{defn:g0} 
For any $\gamma\in W$, let 
$r'=r'(\gamma)$ be the minimum index $i\in\{2,3,\ldots,n\}$
with $\gamma_{i-2}\geq \gamma_{n-1}+1-m$, or $0$ if no such index exists.
Let $B_0$ be the set of $\gamma\in W$ such that
$r'(\gamma)>0$, $\gamma_{r'-1}\leq\gamma_{n-1}+1+m$,
and $\gamma_i-\gamma_{i-2}>m$ for $2\leq i\leq r'-2$.  
Define a map $g_0$ with domain $B_0$ by setting 
$$g_0(\gamma)=(\gamma_0,\dots,\gamma_{r'-2},
\gamma_{n-1}+1,\gamma_{r'-1},\dots,\gamma_{n-2})\in W,\quad 
\forall\gamma\in B_0.$$  
\end{defn}

\begin{lem}\label{lem:f0-bij} 
The map $f_0$ is a bijection from $A_0$ to $B_0$ with inverse $g_0$.  
\end{lem}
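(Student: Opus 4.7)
The plan is to verify the four standard conditions for a bijection with a given inverse: (i) $f_0$ maps $A_0$ into $B_0$; (ii) $g_0$ maps $B_0$ into $A_0$; (iii) $g_0 \circ f_0 = \mathrm{id}_{A_0}$; and (iv) $f_0 \circ g_0 = \mathrm{id}_{B_0}$. The central technical assertion underpinning all four is the matching of the defining indices, namely $r'(f_0(\gamma)) = r(\gamma)$ for $\gamma\in A_0$ and $r(g_0(\gamma)) = r'(\gamma)$ for $\gamma\in B_0$. Once the indices match, it is transparent from the definitions that inserting/removing the displaced entry exactly undoes the other operation, since $f_0$ takes the entry $\gamma_{r-1}$ out of position $r-1$, slides the tail to the left, and parks $\gamma_{r-1}-1$ at the end, while $g_0$ is precisely the reverse motion with the $+1$ restoring the decrement.

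First I would prove the key identity $r'(f_0(\gamma)) = r(\gamma)$. Write $\gamma' = f_0(\gamma)$, so $\gamma'_{n-1}+1-m = \gamma_{r-1}-m$ and $\gamma'_j=\gamma_j$ for $j\le r-2$. Taking $i=r$ gives $\gamma'_{r-2}=\gamma_{r-2}\ge \gamma_{r-1}-m$ by the $m$-Dyck inequality, so $r'(\gamma')\le r$. To rule out smaller $i$, use the chain of strict inequalities built into $r$: for $2\le k<r$ one has $\gamma_{k-2}<\gamma_k-m$. Iterating on the parity of $r-1$ yields $\gamma_{r-3}<\gamma_{r-1}-m$, $\gamma_{r-5}<\gamma_{r-1}-2m$, and so on, eliminating all $i<r$ of one parity. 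For the other parity one uses the inequality $\gamma_{r-2}<\gamma_{r-1}$ already established in the proof of Lemma~\ref{lem:f0-dinv} (which combines the two defining properties of $r$), together with $\gamma_{r-4}<\gamma_{r-2}-m$ etc., to conclude $\gamma_{i-2}<\gamma_{r-1}-m$ for every $i<r$. Hence $r'(\gamma')=r$. The remaining membership conditions for $\gamma'\in B_0$ are then bookkeeping: $\gamma'_{r-1}=\gamma_r\le\gamma_{r-1}+m$ is the $m$-Dyck condition at position $r$; the strict gaps $\gamma'_i-\gamma'_{i-2}>m$ for $2\le i\le r-2$ are inherited from the definition of $r$; and $\gamma'$ is itself an $m$-Dyck word since the only nontrivial new adjacency, at position $r-1$, is precisely $\gamma_r\le\gamma_{r-2}+m$, i.e., the defining inequality at $r$, while the last entry $\gamma_{r-1}-1\ge 0$ because $\gamma_{r-1}>0$ (which was asserted in the text for all $\gamma\in W\setminus T$, and in particular for $\gamma\in A_0$).

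The dual statement $r(g_0(\gamma))=r'(\gamma)$ for $\gamma\in B_0$ is proved by a symmetric calculation. Writing $\gamma'=g_0(\gamma)$, one checks $i=r'$ directly from $\gamma'_{r'}-\gamma'_{r'-2}=\gamma_{r'-1}-\gamma_{r'-2}\le m$. For $i<r'$ with $i\le r'-2$, the strict gaps built into the definition of $B_0$ give $\gamma'_i-\gamma'_{i-2}=\gamma_i-\gamma_{i-2}>m$. For the remaining index $i=r'-1$, the bound $\gamma'_{r'-1}-\gamma'_{r'-3}=(\gamma_{n-1}+1)-\gamma_{r'-3}>m$ follows from the minimality of $r'$ applied to $i=r'-1<r'$. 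Thus $r(\gamma')=r'$. Membership of $\gamma'$ in $A_0$ then reduces to: $\gamma'$ is $m$-Dyck (the only new adjacencies are at positions $r'-1$ and $r'$, which are controlled respectively by the defining inequality of $r'$ and by the condition $\gamma_{r'-1}\le\gamma_{n-1}+1+m$ in the definition of $B_0$); $\gamma'_1>0$ (obvious if $r'=2$, and if $r'\ge 3$ then $\gamma'_1=\gamma_1$ and the gap condition forces $\gamma_2>m$, hence $\gamma_1>0$); and the $A_0$-inequality $\gamma'_{r-1}-1\le\gamma'_{n-1}+m$, which becomes the $m$-Dyck condition $\gamma_{n-1}\le\gamma_{n-2}+m$ after substitution.

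Finally, with the index identifications in hand, the inverse relations $g_0\circ f_0=\mathrm{id}_{A_0}$ and $f_0\circ g_0=\mathrm{id}_{B_0}$ are immediate: $g_0$ reinserts the value $\gamma'_{n-1}+1=\gamma_{r-1}$ at position $r-1$ and shifts the middle segment back to the right, reconstructing $\gamma$ entry by entry, and the reverse composition is analogous. The main obstacle in the whole argument is the step $r'(f_0(\gamma))=r(\gamma)$: ruling out indices $i<r$ of the opposite parity from $r-1$ requires the non-obvious bound $\gamma_{r-2}<\gamma_{r-1}$, which is why this proof must lean on Lemma~\ref{lem:f0-dinv} rather than on the $m$-Dyck and $r$-definition inequalities alone. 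Everything else is routine substitution.
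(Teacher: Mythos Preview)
Your proposal is correct and takes essentially the same approach as the paper: both arguments hinge on proving $r'(f_0(\gamma))=r(\gamma)$ by invoking the inequality $\gamma_{r-1}-\gamma_i>m$ for $i<r-2$ established in the proof of Lemma~\ref{lem:f0-dinv}, then reading off membership in $B_0$ and the identity $g_0\circ f_0=\mathrm{id}$. The paper dispatches the reverse direction with a one-line ``similarly,'' whereas you spell out the verification that $r(g_0(\gamma))=r'(\gamma)$ and that $g_0(\gamma)\in A_0$; your parity-based phrasing of the chain of inequalities is a slight repackaging of the same argument, not a different idea.
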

\begin{proof} 
Take any $\gamma\in A_0$. In the proof of Lemma \ref{lem:f0-dinv}, 
we showed that $\gamma_{r-1}-\gamma_i>m$ for $i<r-2$. On the other hand,
$\gamma_{r-1}-\gamma_{r-2}\le m$. Let $\gamma'=f_0(\gamma)$; 
then $r$ is the smallest integer such that $\gamma'_{r-2}\ge\gamma'_{n-1}+1-m$ 
and $\gamma'_i-\gamma'_{i-2}>m$ for $2\le i\le r-2$. 
So $f_0(\gamma)\in B_0$, $r=r'(f_0(\gamma))$, 
and hence Definition~\ref{defn:g0} gives
$$\aligned 
g_0\circ f_0(\gamma)&=g_0((\gamma_0,\dots,\gamma_{r-2},\gamma_r,\dots,\gamma_{n-1},\gamma_{r-1}-1))\\
&=
(\gamma_0,\dots,\gamma_{r-2},(\gamma_{r-1}-1)+1,\gamma_r,\dots,\gamma_{n-1})
=\gamma.
\endaligned
$$
Similarly, we can show that $g_0$ maps $B_0$ into $A_0$
and $f_0\circ g_0(\gamma)=\gamma$ for any $\gamma\in B_0$, 
completing the proof that $f_0$ and $g_0$ are mutually inverse bijections.  
\end{proof}

\section{Proof of Joint Symmetry for $n\leq 4$}
\label{sec:proof-jsymm}

In this section, we prove the chain conjecture~\ref{reduced_conj}, 
and hence the joint symmetry of $C^{(m)}_n(q,t)$, for all positive 
integers $m,n$ with $n\leq 4$. First note that these conjectures hold
when $n=1$ because $C_1^{(m)}(q,t)=1$ for every $m$. In the next three 
subsections we shall settle the cases $n=2,3,4$, respectively.

\subsection{The Case $n=2$.}
\label{subsec:prove-n=2}
We have $W=\W^{(m)}_2=\{(0,i):0\leq i\leq m\}$
and $C^{(m)}_2(q,t)=\sum_{i=0}^m q^it^{m-i}$, which is
evidently jointly symmetric. Nevertheless,
let us see what our approach does when $n=2$. We have
$T=\{(0,0)\}$, $r(\gamma)=2$ for all $\gamma\in W$,
$A_0=\{(0,i):1\leq i\leq m\}=W\setminus T$,
$B_0=\{(0,i):0\leq i<m\}$, and 
$I=W\setminus B_0=\{(0,m)\}$.
We take the map $f:W\setminus T\rightarrow W\setminus I$
to be the map $f_0:A_0\rightarrow B_0$ given by
$f_0((0,i))=(0,i-1)$ for $1\leq i\leq m$, which
has the correct effect on $\area$ and $\dinv_m$.
The unique map $h:T\rightarrow I$ gives a bijective proof
of $C_T(q,t)=C_I(t,q)$ since
\[ C_T(q,t)=q^{\area((0,0))}t^{\dinv_m((0,0))}
     =t^m  =t^{\area((0,m))}q^{\dinv_m((0,m))} = C_I(t,q). \]
So $C^{(m)}_2(q,t)=C^{(m)}_2(t,q)$ by Theorem~\ref{thm:chain-symm}.

\subsection{The Case $n=3$.}
\label{subsec:prove-n=3}

Let $W=\W^{(m)}_3$. We have $T=\{(0,0,i): 0\leq i\leq m\}$ and
$A_0=\{\gamma\in W: \gamma_1>0\}=W\setminus T$,
so we can take $f=f_0:A_0\rightarrow B_0$ and 
$I=W\setminus B_0=\{(0,i,i+m): 0\leq i\leq m\}$.
For $n=3$, the definition of $f$ can be rephrased as follows: 
$$f(\gamma)=
\begin{cases}
(\gamma_0,\gamma_2,\gamma_1-1), \text{ if }\gamma_2\leq m;\\
(\gamma_0,\gamma_1,\gamma_2-1), \text{ if }\gamma_2> m.\\
\end{cases}
$$
We already proved that $f$ is a bijection (Lemma~\ref{lem:f0-bij})
having the correct effect on $\area$ and $\dinv_m$ (Lemma~\ref{lem:f0-dinv}).
We need to prove that $C_T(q,t)=C_I(t,q)$.  Define 
$$
\aligned
&g:I\setminus\{(0,0,m)\} \longrightarrow I\setminus\{(0,m,2m)\}, \quad
\gamma\mapsto (\gamma_0, \gamma_1-1,\gamma_2-1);\\
&g': T\setminus\{(0,0,m)\} \longrightarrow T\setminus\{(0,0,0)\}, \quad
\gamma\mapsto (0, 0, \gamma_2+1).
\endaligned
$$
Then $g$ increases $\dinv_m$ by 1 and decreases $\area$ by 2,
whereas $g'$ increases $\area$ by 1 and decreases $\dinv_m$ by 2. 
The set $I$ consists of a single
$g$-chain of $m+1$ objects starting at $(0,m,2m)$ and ending at $(0,0,m)$. So
$$
\aligned
 C_I(q,t)&=q^{\area((0,m,2m))}t^{\dinv_m((0,m,2m))} 
  (1+q^{-2}t+q^{-4}t^2+\cdots+q^{-2m}t^{m})\\
  &=q^{3m}t^0+q^{3m-2}t^1+q^{3m-4}t^2+\cdots+q^{m}t^{m}.\\
\endaligned
$$
Similarly, 
$C_T(q,t)=q^{0}t^{3m}+q^{1}t^{3m-2}+q^{2}t^{3m-4}+\cdots+q^{m}t^{m}$.
Hence $C_T(q,t)=C_I(t,q)$. Note that every monomial in $C_I(q,t)$ has 
a different total degree. It follows that 
$$
C_3^{(m)}(q,t)= \sum_{j=0}^m(q^{3m-2j}t^j+q^{3m-2j-1}t^{j+1}+\cdots+q^jt^{3m-2j}),
$$
so $C_3^{(m)}(q,t)$ is
the sum of all monomials $q^jt^k$ where $(j,k)$ runs through all lattice points that are either inside or on the boundary of the triangle with vertices $(0,3m)$, $(m,m)$, $(3m,0)$ (Figure \ref{fig:C3tri}).
In particular, for $n=3$, there is a unique involution on $W$ giving a 
bijective proof of joint symmetry.

\begin{center}
\begin{figure}
\epsfig{file=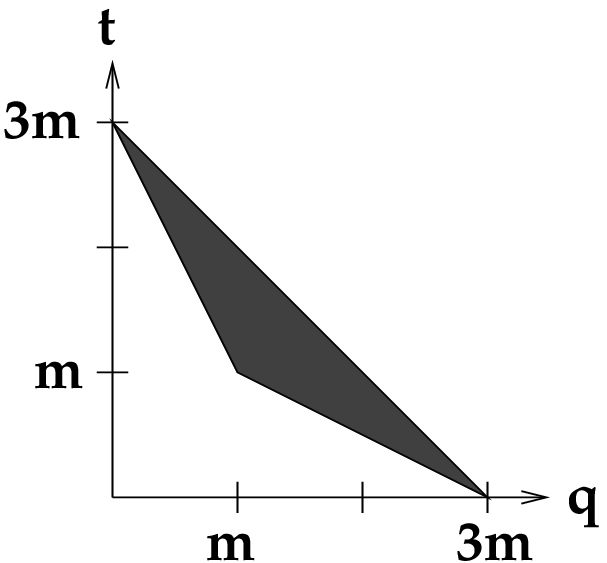,scale=.7}
\hspace{.5in}
\epsfig{file=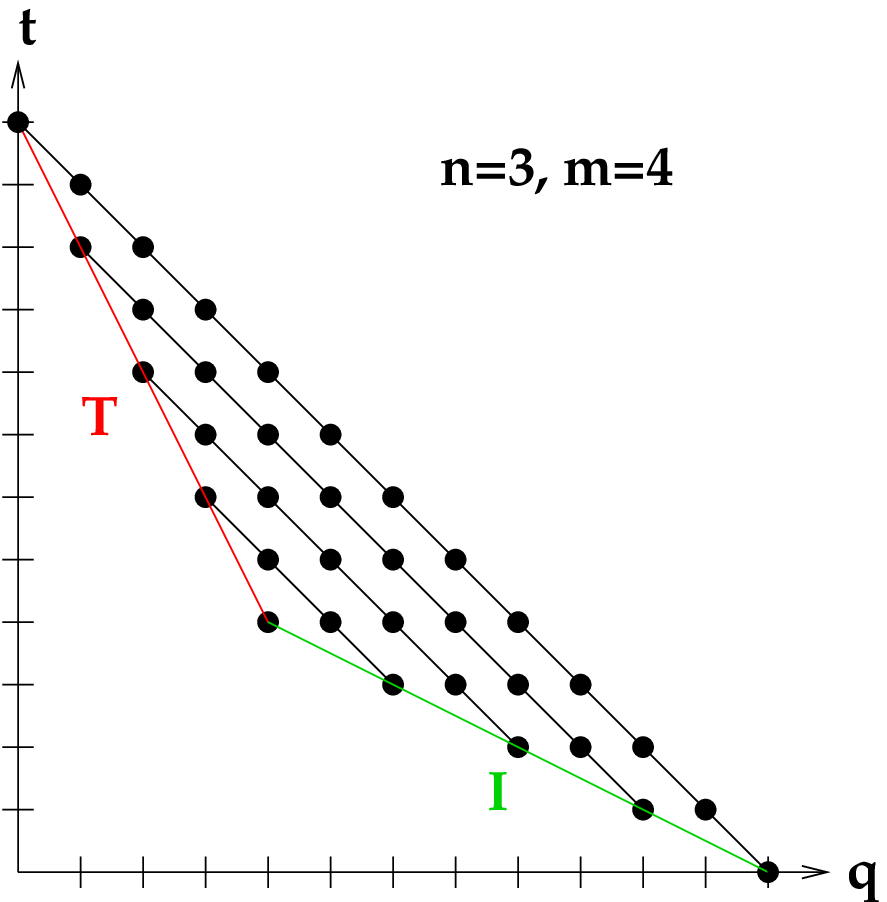,scale=.65}
\caption{Left: the triangle that contains all $(j,k)$ for monomials $q^jt^k$ in $C_3^{(m)}(q,t)$.
Right: the special case when $m=4$ (the lattice points on the green line give monomials in $C_I(q,t)$; the lattice points on the red line give monomials in $C_T(q,t)$. The black lines illustrate $f$-chains).}
\label{fig:C3tri}
\end{figure}
\end{center}

\subsection{The Case $n=4$.}
\label{subsec:prove-n=4}
Let $W=\W^{(m)}_4$. We define a partial chain map $f_1$ and glue it with $f_0$ to obtain a bijective map $f$ that satisfies Conjecture \ref{reduced_conj}.

\begin{defn}\label{def:f1}
Let $A_1=\{(0,\gamma_1,\gamma_2,\gamma_3)\in\W: \gamma_2-\gamma_3>m+1\}$.
(Note that $A_1\subseteq W\setminus T$ and $A_0\cap A_1=\emptyset$.)
Define a map $f_1$ with domain $A_1$ by 
$$f_1((0,\gamma_1,\gamma_2,\gamma_3))
=(0,\gamma_3+1,\gamma_1-1,\gamma_2-1)\in W.$$
Let $B_1=f_1(A_1)$. 
Define $A=A_0\cup A_1$, $B=B_0\cup B_1$, and
$f:A\rightarrow B$ by $f(\gamma)=f_0(\gamma)$ for $\gamma\in A_0$
and $f(\gamma)=f_1(\gamma)$ for $\gamma\in A_1$.
\end{defn}

\begin{lem}\label{lem:gamma-A} 
$\W\setminus A=\{(0,0,\gamma_2,\gamma_3)\in\W\}=T.$
\end{lem}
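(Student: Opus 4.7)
The plan is to prove the two inclusions $T\subseteq W\setminus A$ and $W\setminus A\subseteq T$ by unfolding the definitions of $A_0$ and $A_1$ and running a short case analysis on the index $r(\gamma)\in\{2,3,4\}$.

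First I would handle the easy inclusion $T\subseteq W\setminus A$. Any $\gamma\in T$ satisfies $\gamma_1=0$, which immediately rules out $\gamma\in A_0$ since the definition of $A_0$ requires $\gamma_1>0$. To exclude $\gamma\in A_1$, note that $\gamma_1=0$ together with the $m$-Dyck inequality $\gamma_2\leq\gamma_1+m$ forces $\gamma_2\leq m$, whereas membership in $A_1$ requires $\gamma_2-\gamma_3>m+1$ and hence $\gamma_2>m+1+\gamma_3\geq m+1$, a contradiction. Thus $T\cap(A_0\cup A_1)=\emptyset$.

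For the reverse inclusion, I would fix $\gamma\in W$ with $\gamma_1>0$ and show $\gamma\in A_0\cup A_1$ by splitting on $r:=r(\gamma)$. If $r=2$, then $\gamma_{r-1}=\gamma_1\leq\gamma_0+m=m$, so $\gamma_1-1\leq m-1\leq\gamma_3+m$, placing $\gamma\in A_0$. If $r=4$, then the required inequality reads $\gamma_3-1\leq\gamma_3+m$, which is trivial, and again $\gamma\in A_0$. The interesting case is $r=3$: the $A_0$-defining inequality becomes $\gamma_2-1\leq\gamma_3+m$, i.e.\ $\gamma_2-\gamma_3\leq m+1$. When this holds, $\gamma\in A_0$; otherwise $\gamma_2-\gamma_3>m+1$, which places $\gamma$ precisely in $A_1$.

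The whole argument amounts to bookkeeping. The only delicate point is the $r=3$ branch, where the gap $\gamma_2-\gamma_3>m+1$ forces the use of $A_1$---this is exactly the scenario that motivates the piecewise definition of $f$ via $f_0$ and $f_1$. I expect no substantive obstacle beyond careful unpacking of the definitions.
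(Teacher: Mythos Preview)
Your proof is correct and follows essentially the same approach as the paper: a case analysis on $r(\gamma)\in\{2,3,4\}$ to check the defining inequality of $A_0$, with $A_1$ handling exactly the failure in the $r=3$ branch. The only organizational difference is that the paper first lists the three classes comprising $W\setminus A_0$ and then removes $A_1$, whereas you prove the two inclusions separately; the underlying computations are identical.
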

\begin{proof}
First, observe that $\W\setminus A_0$ 
contains the following three classes of elements (which correspond to $r=2,3,4$, respectively):
\begin{itemize}
\item[(a)] $\gamma_2\le m$, $\gamma_1=0$; 
\item[(b)] $\gamma_2>m$, $\gamma_3-\gamma_1\le m$, and $\gamma_2-1>\gamma_3+m$; 
\item[(c)] $\gamma_2>m$, $\gamma_3-\gamma_1>m$, and $\gamma_3=0$.
\end{itemize} 
(Note that for $r=3$ we do not need to include the case $\gamma_2>m$, $\gamma_3-\gamma_1\le m$, and $\gamma_1=0$, since $\gamma_1\ge \gamma_2-m>0$.)
Elements satisfying (b) are exactly those in $A_1$. No elements satisfy (c). 
Thus  $\W \setminus A$ contains elements only in (a), i.e., those of the form 
$(0,0,\gamma_2,\gamma_3)$.
\end{proof}

\begin{lem}
$B_0\cap B_1=\emptyset$, so $f:A\to B$ is a bijection.
\end{lem}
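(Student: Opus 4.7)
The plan is to prove $B_0 \cap B_1 = \emptyset$ by picking an arbitrary $\eta \in B_1$ and verifying that it violates the membership conditions for $B_0$ from Definition~\ref{defn:g0}; once this disjointness is in hand, the bijectivity of $f : A \to B$ is immediate, since $f_0 : A_0 \to B_0$ is bijective by Lemma~\ref{lem:f0-bij}, $f_1 : A_1 \to B_1$ is bijective with inverse $(0, a, b, c) \mapsto (0, b+1, c+1, a-1)$ (as one checks coordinatewise), and $A_0 \cap A_1 = \emptyset$ is recorded in Definition~\ref{def:f1}.

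For the main step, I would write an arbitrary $\eta \in B_1$ as $\eta = f_1(\gamma) = (0, \gamma_3+1, \gamma_1-1, \gamma_2-1)$ for some $\gamma = (0, \gamma_1, \gamma_2, \gamma_3) \in A_1$, so the defining inequality $\gamma_2 - \gamma_3 > m+1$ is available. For $\eta$ to belong to $B_0$ one needs $r'(\eta) \in \{2,3,4\}$ together with the auxiliary inequality $\eta_i - \eta_{i-2} > m$ for $2 \leq i \leq r'(\eta) - 2$; my strategy is to rule out each value of $r'(\eta)$ in turn. By the definition of $r'$, having $r'(\eta) = k$ forces $\eta_{k-2} \geq \eta_3 + 1 - m = \gamma_2 - m$. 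For $k=2$ this gives $0 \geq \gamma_2 - m$, contradicting $\gamma_2 > \gamma_3 + m + 1 \geq m+1$. For $k=3$ it gives $\gamma_3 + 1 \geq \gamma_2 - m$, i.e.\ $\gamma_2 - \gamma_3 \leq m+1$, again contradicting the defining inequality of $A_1$. For $k=4$, the auxiliary condition specializes to $\eta_2 - \eta_0 > m$, i.e.\ $\gamma_1 - 1 > m$; but $\gamma_1 \leq \gamma_0 + m = m$ because $\gamma$ is an $m$-Dyck word. Hence $\eta \notin B_0$ in every case, and $B_0 \cap B_1 = \emptyset$.

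The only place where care is needed is the $k=4$ case, where one must remember to invoke the auxiliary inequality in the definition of $B_0$; without it, that case would remain genuinely open. Otherwise, the argument is driven purely by the single inequality $\gamma_2 - \gamma_3 > m+1$ defining $A_1$ together with the elementary $m$-Dyck constraint $\gamma_1 \leq m$, so there is no real technical obstacle beyond tidy case bookkeeping.
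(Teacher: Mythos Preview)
Your proof is correct and follows essentially the same route as the paper: both assume $\eta=f_1(\gamma)\in B_0$ for some $\gamma\in A_1$, split on the value of $r'(\eta)\in\{2,3,4\}$, and derive the identical contradictions (from $\gamma_2>m$, from $\gamma_2-\gamma_3>m+1$, and from $\gamma_1\le m$ respectively), with the paper likewise noting that $f_1$ is evidently injective to conclude bijectivity of $f$.
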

\begin{proof}
Assume, to the contrary, that there is a $\gamma\in A_1$ such that  
$f(\gamma)=\gamma'\in B_0$. Thus $\gamma_1'=\gamma_3+1$, 
$\gamma_2'=\gamma_1-1$, $\gamma_3'=\gamma_2-1$. Let $r'=r'(\gamma')$ be
defined as in Definition~\ref{defn:g0}.  
Note $r'$ can be $2$, $3$, or $4$. 
Below we show that all three cases give a contradiction.
\begin{itemize}
\item[(i)] If $r'=2$, then $0\ge \gamma_3'+1-m$ by the definition of $r'$, hence $\gamma_2\le m$. But $\gamma_2-\gamma_3>m+1$ by the definition of $A_1$. Thus $\gamma_2>m$, a contradiction.
\item[(ii)] If $r'=3$, then $\gamma'_1\ge \gamma'_3+1-m$ by the definition of $r'$, which implies $\gamma_2-\gamma_3\le m+1$ and again contradicts the definition of $A_1$.  
\item[(iii)] If $r'=4$, then $\gamma'_2=\gamma'_2-\gamma'_0>m$ 
by the definition of $B_0$. 
Thus $\gamma_1>m+1$ which contradicts the condition that $\gamma_1\le m$.  
\end{itemize}
Since $f_1$ is evidently an injection, we conclude that $f$ is a bijection, 
whose inverse can be defined by $f^{-1}(\gamma)=f_0^{-1}(\gamma)$ for
$\gamma\in B_0$ and $f^{-1}(\gamma)=(0,\gamma_2+1,\gamma_3+1,\gamma_1-1)$ 
for $\gamma\in B_1$.
\end{proof}

\begin{lem}\label{lem: dinv of tilde f} For every $\gamma\in A$,
$$\area(f(\gamma))=\area(\gamma)-1 \quad \text{and} 
\quad \dinv_m(f(\gamma))=\dinv_m(\gamma)+1.$$
\end{lem}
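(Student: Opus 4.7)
The plan is to exploit the disjoint decomposition $A=A_0\sqcup A_1$. For $\gamma\in A_0$ the map $f$ acts as $f_0$, so Lemma~\ref{lem:f0-dinv} already supplies both identities. Only the case $\gamma\in A_1$ remains, where $f(\gamma)=f_1(\gamma)=(0,\gamma_3+1,\gamma_1-1,\gamma_2-1)$. The area statement will be immediate from $\area(f_1(\gamma))=(\gamma_3+1)+(\gamma_1-1)+(\gamma_2-1)=\area(\gamma)-1$, so the real work is concentrated in the $\dinv_m$ identity for $\gamma\in A_1$.

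For that identity I would first pin down the sizes of the four entries of $\gamma$. Combining the $m$-Dyck inequalities $\gamma_1\le m$, $\gamma_2\le\gamma_1+m$, $\gamma_3\ge 0$ with the defining conditions of $A_1$ (namely $\gamma_1>0$ and $\gamma_2-\gamma_3>m+1$) should yield $\gamma_1\in[2,m]$, $\gamma_2\in[m+2,2m]$, $\gamma_3\in[0,\gamma_1-2]$, and in particular $\gamma_2-\gamma_1\in[2,m]$. These ranges are exactly tight enough that, for each of the six pairs $0\le i<j\le 3$, a single branch of the piecewise definition of $\msc$ applies, both for $\gamma$ and for $\gamma':=f_1(\gamma)$.

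I would then tabulate the twelve values $\msc(\gamma_i-\gamma_j)$ and $\msc(\gamma'_i-\gamma'_j)$ directly from those ranges. Two pairs on each side should contribute $0$ (those whose argument has absolute value exceeding $m$, coming from the ``jump'' $\gamma_2-\gamma_3>m+1$ together with the fact that $\gamma_2$ is far from $\gamma_0$). The remaining four contributions on each side are linear in $\gamma_1,\gamma_2,\gamma_3$, and summation should collapse to $\dinv_m(\gamma)=4m+1-\gamma_1-\gamma_2$ and $\dinv_m(f_1(\gamma))=4m+2-\gamma_1-\gamma_2$, giving the required increase by one. The symmetry $\msc(x)=\msc(1-x)$ noted in the proof of Lemma~\ref{lem:f0-dinv} should provide a convenient shortcut in several of the pair-by-pair comparisons.

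The main obstacle will be the bookkeeping: one must verify that each range estimate is tight enough that no $\msc$ evaluation is ambiguous, paying particular attention to the boundary cases $\gamma_3=0$, $\gamma_2=m+2$, and $\gamma_2-\gamma_3=m+2$, where the relevant differences sit at the edge of the piecewise regions of $\msc$. Once those boundary cases are verified, what remains is purely mechanical arithmetic on six-term sums.
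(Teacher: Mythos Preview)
Your proposal is correct and follows essentially the same route as the paper: reduce to $A_1$ via Lemma~\ref{lem:f0-dinv}, derive the range constraints $\gamma_1\in[2,m]$, $\gamma_2\in[m+2,2m]$, $\gamma_3\in[0,\gamma_1-2]$ from the $A_1$ conditions, and evaluate the six $\msc$ contributions on each side pair by pair. The only cosmetic difference is that the paper tracks the increment $\msc(\gamma'_i-\gamma'_j)-\msc(\gamma_i-\gamma_j)$ term by term (obtaining $+1,0,-1,0,+1,0$), whereas you compute the closed forms $\dinv_m(\gamma)=4m+1-\gamma_1-\gamma_2$ and $\dinv_m(f_1(\gamma))=4m+2-\gamma_1-\gamma_2$ and subtract at the end; both arguments rest on the same range checks, including the boundary cases you flag.
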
 
\begin{proof}
Because of Lemma~\ref{lem:f0-dinv}, 
we need only to consider $\gamma\in A_1$. The first equality is evident. 
The left hand side of the second equality is 
\begin{multline*}
 \msc(-\gamma_1+1)+\msc(-\gamma_2+1)+\msc(-\gamma_3-1)
+\msc(\gamma_1-\gamma_2)\\+\msc(\gamma_3-\gamma_1+2)+\msc(\gamma_3-\gamma_2+2).
\end{multline*}
Write each summand in terms of $\msc(\gamma_i-\gamma_j)$ for $i<j$ 
(and recall that $\gamma_0=0$):
\begin{itemize}
\item[(i)] $0<\gamma_1\le m\Rightarrow \msc(-\gamma_1+1)=\msc(-\gamma_1)+1$.  
\item[(ii)] $\gamma_2-\gamma_3>m+1\Rightarrow \gamma_2\ge m+2\Rightarrow \msc(-\gamma_2+1)=\msc(-\gamma_2)=0$.  
\item[(iii)] $(\gamma_2-\gamma_3>m+1)$ and $(\gamma_2\le 2m)\Rightarrow
0\leq \gamma_3<m-1\Rightarrow \msc(-\gamma_3-1)=\msc(-\gamma_3)-1$.  
\item[(iv)] $\msc(\gamma_1-\gamma_2)$ is unchanged.  
\item[(v)] $m\ge\gamma_1\ge\gamma_2-m>\gamma_3+1\Rightarrow 1< \gamma_1-\gamma_3\le m\Rightarrow \msc(\gamma_3-\gamma_1+2)=\msc(\gamma_1-\gamma_3-1)=\msc(\gamma_1-\gamma_3)+1$.  
\item[(vi)] $\gamma_2-\gamma_3>m+1\Rightarrow \msc(\gamma_3-\gamma_2+2)=\msc(\gamma_2-\gamma_3-1)=\msc(\gamma_2-\gamma_3)=0$.
\end{itemize}
Summing up (i)--(vi), we obtain the second equality.
\end{proof}

\begin{lem}\label{lem:W-B union}
{\rm(1)} $B_1=\{\gamma\in\W: \gamma_1\ge 1, \gamma_2\le m-1, 
\gamma_3-\gamma_1\ge m\}$; 

{\rm(2)} $I=\W\setminus B$ is the disjoint union $D_1\cup D_2\cup D_3$, where:
$$\aligned 
&D_1=\{(0,\gamma_1, \gamma_2,\gamma_2+m):
0<\gamma_1\le m< \gamma_2\le\gamma_1+m\},\\
&D_2=\{(0,\gamma_1,m,\gamma_3):
0\le \gamma_1\le m, \gamma_1+m\le\gamma_3\le 2m\},\\
&D_3=\{(0,0,\gamma_2,\gamma_3): 
0\le\gamma_2< m\le \gamma_3\le\gamma_2+m\}.
\endaligned$$
\end{lem}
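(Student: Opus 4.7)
The plan is to prove part (1) directly from the definition of $f_1$, and then prove part (2) by a case analysis on $(\gamma_1,\gamma_2)$ using the characterization of $B_0$ supplied by Definition~\ref{defn:g0}.

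For (1), given $\gamma=(0,\gamma_1,\gamma_2,\gamma_3)\in A_1$ (so $\gamma_2-\gamma_3>m+1$), the image $f_1(\gamma)=(0,\gamma_3+1,\gamma_1-1,\gamma_2-1)$ satisfies $\gamma_3+1\ge 1$, $\gamma_1-1\le m-1$, and $(\gamma_2-1)-(\gamma_3+1)\ge m$, giving $B_1\subseteq\{\gamma\in W:\gamma_1\ge 1,\ \gamma_2\le m-1,\ \gamma_3-\gamma_1\ge m\}$. Conversely, given any such $\gamma'=(0,\gamma_1',\gamma_2',\gamma_3')$, the candidate preimage $(0,\gamma_2'+1,\gamma_3'+1,\gamma_1'-1)$ is routinely checked to lie in $W$ (using $\gamma_3'-\gamma_1'\ge m$ and $\gamma'\in W$), and its second minus third entries differ by $\gamma_3'-\gamma_1'+2\ge m+2>m+1$, so it lies in $A_1$ and maps to $\gamma'$ under $f_1$.

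For part (2), the disjointness of $D_1,D_2,D_3$ is immediate from the values of $\gamma_2$ (respectively $>m$, $=m$, $<m$) and $\gamma_1$ (nonzero in $D_1$, zero in $D_3$). To show each $D_i$ is disjoint from $B=B_0\cup B_1$, disjointness from $B_1$ follows from part (1), since $D_1\cup D_2$ forces $\gamma_2\ge m$ and $D_3$ forces $\gamma_1=0$. For disjointness from $B_0$, one computes $r'(\gamma)$ from Definition~\ref{defn:g0} on each $D_i$: in every case one finds either $r'(\gamma)=0$ (for instance whenever $\gamma_3=\gamma_2+m$) or $r'(\gamma)=4$ with $\gamma_2\le m$, which violates the clause $\gamma_i-\gamma_{i-2}>m$ for $2\le i\le r'-2$.

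The reverse inclusion $W\setminus B\subseteq D_1\cup D_2\cup D_3$ is the heart of the proof and proceeds by case analysis on $(\gamma_1,\gamma_2)$. When $\gamma_1=0$, exclusion from $B_1$ is automatic, and the $r'$-analysis shows $\gamma\notin B_0$ iff $\gamma_3\ge m$; combined with $\gamma_3\le\gamma_2+m$ and the Dyck constraint $\gamma_2\le m$, this places $\gamma$ in $D_2$ (if $\gamma_2=m$) or $D_3$ (if $\gamma_2<m$). When $\gamma_1\ge 1$ and $\gamma_2\le m-1$, exclusion from $B_1$ forces $\gamma_3\le\gamma_1+m-1$, after which $\gamma$ necessarily lies in $B_0$ via the $r'=2$ or $r'=3$ branch, contributing nothing new. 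When $\gamma_1\ge 1$ and $\gamma_2=m$, the condition $\gamma\notin B_0$ reduces to $\gamma_3\ge\gamma_1+m$, producing $D_2$. When $\gamma_1\ge 1$ and $\gamma_2\ge m+1$, a parallel $r'$-analysis shows $\gamma\notin B_0$ forces $\gamma_3=\gamma_2+m$, producing $D_1$.

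The main obstacle is the bookkeeping in the $B_0$-analysis. With four possible values of $r'$ (namely $0,2,3,4$) and the attached conditions $\gamma_{r'-1}\le\gamma_3+1+m$ and $\gamma_i-\gamma_{i-2}>m$ for $2\le i\le r'-2$, one has to march through each subcase carefully and cross-check against the $m$-Dyck inequalities $\gamma_i\le\gamma_{i-1}+m$. The individual estimates are elementary, but organizing them so that no subcase slips through, in particular the boundary case $\gamma_3=\gamma_2+m$ (which forces $r'=0$), is where errors are easy to make.
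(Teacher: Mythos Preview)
Your proof is correct and follows essentially the same approach as the paper. Both arguments hinge on the characterization of $B_0$ via the index $r'$ from Definition~\ref{defn:g0}; the paper first computes $W\setminus B_0$ in one shot (splitting according to $r'=2,3,4$) and then removes $B_1$, whereas you organize the same computation by the value of $(\gamma_1,\gamma_2)$ and handle $B_0$ and $B_1$ simultaneously in each subcase, but the underlying case analysis and the key inequalities checked are the same.
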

\begin{proof} 
The proof of (1) is routine. For (2), we first find out $\W\setminus B_0$.
By a case-by-case study for $r'=2, 3, 4$, it is straightforward to check 
that $B_0$ contains those $\gamma$ with
$\gamma_3\le m-1$ (in the case $r'(\gamma)=2$), 
those with $m\le\gamma_3\le\gamma_1+m-1$ (in the case $r'(\gamma)=3$), and 
those with $\gamma_1+m\le \gamma_3\le\gamma_2+m-1$ and $\gamma_2>m$ 
(in the case $r'(\gamma)=4$). Hence
$\W \setminus B_0$ consists of $\gamma\in\W$ that satisfy
\begin{equation}\label{W-B 1}
\gamma_1+m\le \gamma_3\le\gamma_2+m-1 \textrm{ and }\gamma_2\le m
\end{equation}
or
\begin{equation}\label{W-B 2}
\gamma_3\ge\max(\gamma_1,\gamma_2)+m.
\end{equation}
Note that (\ref{W-B 2}) is equivalent to 
$\gamma_1\le\gamma_2$ and $\gamma_3=\gamma_2+m$, 
because $\gamma_3\leq \gamma_2+m$.
Therefore $\W\setminus B$ consists of three classes of elements:

$(0,\gamma_1, \gamma_2,\gamma_2+m)$ where $\gamma_1\le m< \gamma_2$,

$(0,0,\gamma_2,\gamma_3)$ where $\gamma_2< m\le \gamma_3$, and

$(0,\gamma_1,m,\gamma_3)$ where $\gamma_1+m\le\gamma_3$,

\noindent which gives (2).
\end{proof}

\begin{rem}
One can check that $f$ can be defined piecewise as follows:
$$
\aligned
& \text{ if }\gamma_2\leq m\text{ then } f(\gamma)= (\gamma_0,\gamma_2,\gamma_3,\gamma_1-1);\\
& \text{ if }\gamma_2>m\text{ then }\\
& \quad \left| 
\aligned
& \text{ if }\gamma_3-\gamma_1> m\text{ then } f(\gamma)= (\gamma_0,\gamma_1,\gamma_2,\gamma_3-1); \\
& \text{ if }\gamma_3-\gamma_1\leq m\text{ then }\\
& \quad \left|  
\aligned 
 &  \text{ if }\gamma_2-\gamma_3> m+1\text{ then }f(\gamma)= (\gamma_0,\gamma_3+1,\gamma_1-1,\gamma_2-1); \\
 &  \text{ if }\gamma_2-\gamma_3\leq  m+1\text{ then } f(\gamma)= (\gamma_0,\gamma_1,\gamma_3,\gamma_2-1).\\
\endaligned\right.\\
\endaligned\right.\\
\endaligned
$$
\end{rem}
Now to finish the proof of Conjecture~\ref{reduced_conj} for $n=4$, we shall show the following: 
\begin{equation}\label{eq:head-tail symmetry}
\sum_{\gamma\in\W \setminus A} q^{\area(\gamma)}t^{\dinv_m(\gamma)}
=\sum_{\gamma\in\W \setminus B} t^{\area(\gamma)}q^{\dinv_m(\gamma)}.
\end{equation}
By Lemma \ref{lem:gamma-A}, $\W \setminus A$ consists of $\gamma=(0,0,\gamma_2,\gamma_3)\in\W$. Since $\gamma_2\leq m$, we can write $\W\setminus A$ as the disjoint
union $D_1'\cup D_2'\cup D'_3$, where 
\begin{eqnarray*} 
D'_1 &=& \{\gamma\in \W\setminus A: \gamma_2>\gamma_3\},  \\
D'_2 &=& \{\gamma\in \W\setminus A: \gamma_2\le \gamma_3\le m\},\\
D'_3 &=& \{\gamma\in \W\setminus A:  \gamma_3>m\}.
\end{eqnarray*}
Then $\dinv_m(\gamma)
=\msc(0)+2\msc(-\gamma_2)+2\msc(-\gamma_3)+\msc(\gamma_2-\gamma_3)$ is equal to
\small
\begin{equation}\label{h+D'123}
\begin{cases}
m+2(m-\gamma_2)+2(m-\gamma_3)+(m+1-\gamma_2+\gamma_3)=6m+1-3\gamma_2-\gamma_3,& \textrm{ if } \gamma\in D'_1;\\
m+2(m-\gamma_2)+2(m-\gamma_3)+(m+\gamma_2-\gamma_3)=6m-\gamma_2-3\gamma_3,&\textrm{ if }\gamma\in D'_2;\\
m+2(m-\gamma_2)+0+(m+\gamma_2-\gamma_3)=4m-\gamma_2-\gamma_3,& \textrm{ if } \gamma\in D'_3.\\
\end{cases}
\end{equation}
\normalsize
On the other hand, thanks to
Lemma~\ref{lem:W-B union}, $\dinv_m(\gamma)$ is equal to 
\small
\begin{equation}\label{h+D123}
\begin{cases}
(m-\gamma_1)+0+0+(m+\gamma_1-\gamma_2)+0+0=2m-\gamma_2, 
&\textrm{ if } \gamma\in D_1;\\
(m-\gamma_1)+0+0+(m+\gamma_1-m)+0+(m+m-\gamma_3)=3m-\gamma_3,
&\textrm{ if } \gamma\in D_2;\\
m+2(m-\gamma_2)+0+(m+\gamma_2-\gamma_3)=4m-\gamma_2-\gamma_3, 
&\textrm{ if } \gamma\in D_3.\\
\end{cases}
\end{equation}
\normalsize
Therefore, the assertion (\ref{eq:head-tail symmetry}) follows from the following lemma.  
\begin{lem}
\begin{equation}\label{eq:head-tail symmetry-12}
\sum_{\gamma\in D_1\cup D_2} q^{\area(\gamma)}t^{\dinv_m(\gamma)}
=\sum_{\gamma\in D'_1\cup D'_2} t^{\area(\gamma)}q^{\dinv_m(\gamma)};
\end{equation}
\begin{equation}\label{eq:head-tail symmetry-3}
\sum_{\gamma\in D_3} q^{\area(\gamma)}t^{\dinv_m(\gamma)}
=\sum_{\gamma\in D'_3} t^{\area(\gamma)}q^{\dinv_m(\gamma)}.
\end{equation}
\end{lem}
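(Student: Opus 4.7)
The plan is to prove each of (\ref{eq:head-tail symmetry-12}) and (\ref{eq:head-tail symmetry-3}) by exhibiting an explicit bijection from the index set on one side to that on the other, one that swaps $\area$ and $\dinv_m$ so that the identity holds monomial by monomial. I would dispatch (\ref{eq:head-tail symmetry-3}) first via the map $\psi\colon D_3\to D_3'$ given by $\psi\bigl((0,0,\gamma_2,\gamma_3)\bigr)=(0,0,\,2m-\gamma_3,\,2m-\gamma_2)$. The defining inequalities $0\le\gamma_2<m\le\gamma_3\le\gamma_2+m$ of $D_3$ translate verbatim into those defining $D_3'$, and the third rows of (\ref{h+D123}) and (\ref{h+D'123}) share the common expression $\dinv_m=4m-\gamma_2-\gamma_3$, so $\area(\psi(\gamma))=(2m-\gamma_3)+(2m-\gamma_2)=\dinv_m(\gamma)$ and symmetrically $\dinv_m(\psi(\gamma))=\gamma_2+\gamma_3=\area(\gamma)$.

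For (\ref{eq:head-tail symmetry-12}), the naive pairing $D_i\leftrightarrow D_i'$ fails: solving the statistic-swap system $\area(\eta)=\dinv_m(\gamma)$, $\dinv_m(\eta)=\area(\gamma)$ for $\eta=(0,0,\eta_2,\eta_3)$ imposes a parity constraint on $\gamma$ that is satisfied on exactly half of each of $D_1$ and $D_2$. My plan is to split each domain by the relevant parity and cross-match the four resulting pieces with the four pieces of $D_1'\cup D_2'$ cut out by the threshold $\eta_2+\eta_3\lessgtr m$. Explicitly, $\{\gamma\in D_1:\gamma_1+\gamma_2\equiv m\pmod 2\}$ maps to $\{\eta\in D_2':\eta_2+\eta_3<m\}$ via $(\eta_2,\eta_3)=\bigl((m+\gamma_1-\gamma_2)/2,\,(3m-\gamma_1-\gamma_2)/2\bigr)$; the opposite-parity piece of $D_1$ maps to $\{\eta\in D_1':\eta_2+\eta_3<m\}$ via $\bigl((3m+1-\gamma_1-\gamma_2)/2,\,(m-1+\gamma_1-\gamma_2)/2\bigr)$; the even-$\gamma_1$ piece of $D_2$ maps to $\{\eta\in D_2':\eta_2+\eta_3\ge m\}$ via $\bigl(2m-\gamma_3+\gamma_1/2,\,m-\gamma_1/2\bigr)$; and the odd-$\gamma_1$ piece of $D_2$ maps to $\{\eta\in D_1':\eta_2+\eta_3\ge m\}$ via $\bigl((2m+1-\gamma_1)/2,\,(4m-1-2\gamma_3+\gamma_1)/2\bigr)$. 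Each formula is the unique integer solution of the swap system given its parity hypothesis, and by construction each map interchanges $\area$ and $\dinv_m$.

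What then remains is to check that (i) each of the four maps sends its parity-restricted domain bijectively onto its declared codomain piece, and (ii) the four domain pieces exhaust $D_1\cup D_2$ while the four codomain pieces exhaust $D_1'\cup D_2'$. Both reduce to routine translations of inequalities: the $D_1$ bounds $1\le\gamma_1\le m<\gamma_2\le\gamma_1+m$ become $\eta_2+\eta_3<m$ together with the appropriate sign of $\eta_2-\eta_3$, and similarly the $D_2$ bounds become $\eta_2+\eta_3\ge m$ with the right sign of $\eta_2-\eta_3$; the inverse formulas (for instance $\gamma_1=m-\eta_3+\eta_2$, $\gamma_2=2m-\eta_2-\eta_3$ in the first case) recover the preimage from $(\eta_2,\eta_3)$. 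The main obstacle is purely the bookkeeping of these eight inequality translations and the verification that the parity split on the $I$-side corresponds exactly to the $\eta_2+\eta_3\lessgtr m$ split on the $T$-side; once this is in place, the piecewise weight-swapping assembles into (\ref{eq:head-tail symmetry-12}), which together with (\ref{eq:head-tail symmetry-3}) completes the proof of the lemma.
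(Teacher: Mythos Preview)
Your proof is correct. For (\ref{eq:head-tail symmetry-3}) you use exactly the same bijection as the paper. For (\ref{eq:head-tail symmetry-12}) your argument is genuinely different from the paper's main proof: the paper introduces auxiliary chain maps $g$ on $D_1\cup D_2$ and $g'$ on $D_1'\cup D_2'$ that shift $(\area,\dinv_m)$ by $(-2,+1)$ and $(+1,-2)$ respectively, and uses those chains to evaluate both sides explicitly as the same polynomial $\sum_{u=0}^m\sum_{v=u}^{2m-u}q^{6m-u-2v}t^v$. You instead give a direct four-piece weight-swapping bijection, splitting $D_1$ by the parity of $\gamma_1+\gamma_2-m$ and $D_2$ by the parity of $\gamma_1$, and matching these to the pieces of $D_1'\cup D_2'$ cut by the threshold $\eta_2+\eta_3\lessgtr m$. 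Your route is closer in spirit to the paper's Remark~4.2, which shows both sides parametrize the lattice points in a common triangle $\Delta$; in effect you have composed the two parametrizations of that remark into a single explicit map. What the paper's chain approach buys is the closed form for $C_I(q,t)$ needed later for Theorem~\ref{thm:explicit-coeffs} and for the comparison with the Garsia--Haiman formula in \S\ref{sec:proof-GH}; what your approach buys is a cleaner monomial-by-monomial bijection $h:T\to I$, which plugs directly into the bijective machinery of \S\ref{subsec:bij-chain-symm}.
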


\begin{proof}
The equality (\ref{eq:head-tail symmetry-3}) follows immediately from the one-to-one correspondence from $D'_3$ to $D_3$ that sends $(0,0,\gamma_2,\gamma_3)$ to $(0,0,2m-\gamma_3,2m-\gamma_2)$. 
Alternatively, both sides can be shown to be equal to 
\begin{equation}\label{eq:sum'}
\sum_{u=0}^{m-1}\sum_{v=m}^{u+m}q^{u+v}t^{4m-u-v}.
\end{equation}

Next we prove (\ref{eq:head-tail symmetry-12}).  Define 
\small
$$g : (D_1\cup D_2)\setminus  \{(0,0,m,\gamma_3)\in \W \, :\,  \gamma_3\geq m \} \longrightarrow (D_1\cup D_2)\setminus\{(0,\gamma_1,\gamma_1+m,\gamma_1+2m)\in \W \}$$
\normalsize
by
$$
g(\gamma)=\left\{ 
\begin{array}{ll}
(0, \gamma_1,\gamma_2-1,\gamma_3-1), & \text{ if }\gamma\in D_1;\\
(0, \gamma_1-1,\gamma_2,\gamma_3-1), & \text{ if }\gamma\in D_2.
\end{array}
 \right.
$$  
Note that $g$ is a bijection with inverse defined as
$$
g^{-1}(\gamma)=\left\{ 
\begin{array}{ll}
(0, \gamma_1,\gamma_2+1,\gamma_3+1), & \text{ if } m\le \gamma_2 <\gamma_1+m, \gamma_3=\gamma_2+m;\\
(0, \gamma_1+1,\gamma_2,\gamma_3+1), & \text{ if }\gamma_1<m, \gamma_1+m\le \gamma_3<2m.
\end{array}
 \right.
$$
Using the formula (\ref{h+D123}) to compute $\dinv_m$, it is straightforward to verify that
\begin{equation}\label{eq:two sums}
\aligned 
&\sum_{\gamma\in \{(0,0,m,\gamma_3)\in \W \, :\,  \gamma_3\geq m \}} 
q^{\area(\gamma)}t^{\dinv_m(\gamma)}          =       q^{2m}t^{2m} + q^{2m+1}t^{2m-1} +\cdots + q^{3m}t^{m},\\
&\sum_{\gamma\in \{(0,\gamma_1,\gamma_1+m,\gamma_1+2m)\in \W \} } q^{\area(\gamma)}t^{\dinv_m(\gamma)}          = 
q^{6m}t^{0} + q^{6m-3}t^{1} + \cdots + q^{3m}t^{m},
\endaligned
\end{equation}
and that $g$ 
increases $\dinv_m$ by 1 and decreases $\area$ by 2. 
By iterating the map $g$ we obtain maximal sequences $(\gamma, g(\gamma), ..., g^{(\gamma_1)}(\gamma))$ that start from elements in $\{(0,\gamma_1,\gamma_1+m,\gamma_1+2m)\in \W\}$ and end at elements in
$\{(0,0,m,\gamma_3)\in \W \, :\,  \gamma_3\geq m \}$. Moreover, if we define the $(1,2)$-weight of $\gamma$ to be the integer $\area(\gamma)+2\dinv_m(\gamma)$, then each sequence contains elements of the same $(1,2)$-weight. Since the $(1,2)$-weights of those $\gamma$ appearing
 in the first (resp. second) equation of \eqref{eq:two sums} are distinct, the last element of a sequence is determined by the first element. As a result, the left-hand side of \eqref{eq:head-tail symmetry-12} must equal 
\begin{equation}\label{eq:sum}\aligned
\sum_{u=0}^m\sum_{v=u}^{2m-u}q^{6m-u-2v}t^{v}=& (q^{6m}t^{0} + q^{6m-2}t^{1} + q^{6m-4}t^{2} + q^{6m-6}t^{3} + \cdots + q^{2m}t^{2m})\\
+ &(q^{6m-3}t^{1} + q^{6m-5}t^{2} + q^{6m-7}t^{3} + \cdots +   q^{2m+1}t^{2m-1})\\
+ &(q^{6m-6}t^{2} + q^{6m-8}t^{3} + \cdots +   q^{2m+2}t^{2m-2})\\
 &\vdots\\
+ &(q^{3m}t^{m}).
\endaligned
\end{equation} 
On the other hand, define $$g' : (D'_1\cup D'_2)\setminus  \{(0,0,\gamma_2,m)\in \W \} \longrightarrow (D'_1\cup D'_2)\setminus\{(0,0,0,\gamma_3)\in \W \}$$ by
$ g'(\gamma)=(0, 0,\gamma_3+1,\gamma_2)$. Note that $g'$ is a bijection with inverse $g'^{-1}(\gamma)=(0,0,\gamma_3,\gamma_2-1)$. Using (\ref{h+D'123}), we can verify that
\small $$\aligned
&\sum_{\gamma\in \{(0,0,\gamma_2,m)\in \W \}} 
t^{\area(\gamma)}q^{\dinv_m(\gamma)}          =       q^{2m}t^{2m} + q^{2m+1}t^{2m-1} +  q^{2m+2}t^{2m-2} +\cdots + q^{3m}t^{m},\\
&\sum_{\gamma\in \{(0,0,0,\gamma_3)\in \W \}}  t^{\area(\gamma)}q^{\dinv_m(\gamma)}         = 
q^{6m}t^{0} + q^{6m-3}t^{1} + q^{6m-6}t^{2} + \cdots + q^{3m}t^{m}.
\endaligned
$$\normalsize
and that $g'$ increases $\area$ by 1 and decreases $\dinv_m$ by 2. 
For each $\gamma=(0,0,0,\gamma_3)\in \W$, iterating the map $g'$ produces the sequence $\gamma$, $g'(\gamma), ...$, $g'^{(m-\gamma_3)}(\gamma)$ where $g'^{(m-\gamma_3)}(\gamma)$ is in the set $\{(0,0,\gamma_2,m)\in \W \}$.
By a similar argument as above, the right-hand side of \eqref{eq:head-tail symmetry-12} is also equal to \eqref{eq:sum}. This finishes the proof of \eqref{eq:head-tail symmetry-12} and therefore completes the proof of Conjecture~\ref{reduced_conj} for $n=4$.
\end{proof}
\begin{rem}\label{remark:4.2}
We can also prove \eqref{eq:head-tail symmetry-12} directly by showing that
\begin{equation}\label{eq:head-tail symmetry-12D}
\sum_{\gamma\in D'_1\cup D'_2} q^{\area(\gamma)}t^{\dinv_m(\gamma)}
=\sum_{(x,y)\in\Delta}q^xt^y=\sum_{\gamma\in D_1\cup D_2} 
t^{\area(\gamma)}q^{\dinv_m(\gamma)},
\end{equation}
where $\Delta$ is the set of lattice points that are either inside or on the boundary of the triangle with vertices $(0,6m)$, $(m,3m)$, $(2m,2m)$ (Figure \ref{fig:C4tri} Left).
\begin{center}
\begin{figure}
\epsfig{file=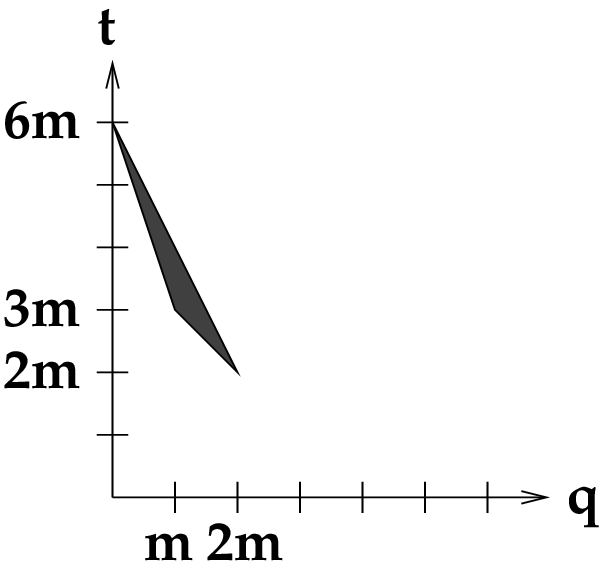,scale=.7}
\hspace{.5in}
\epsfig{file=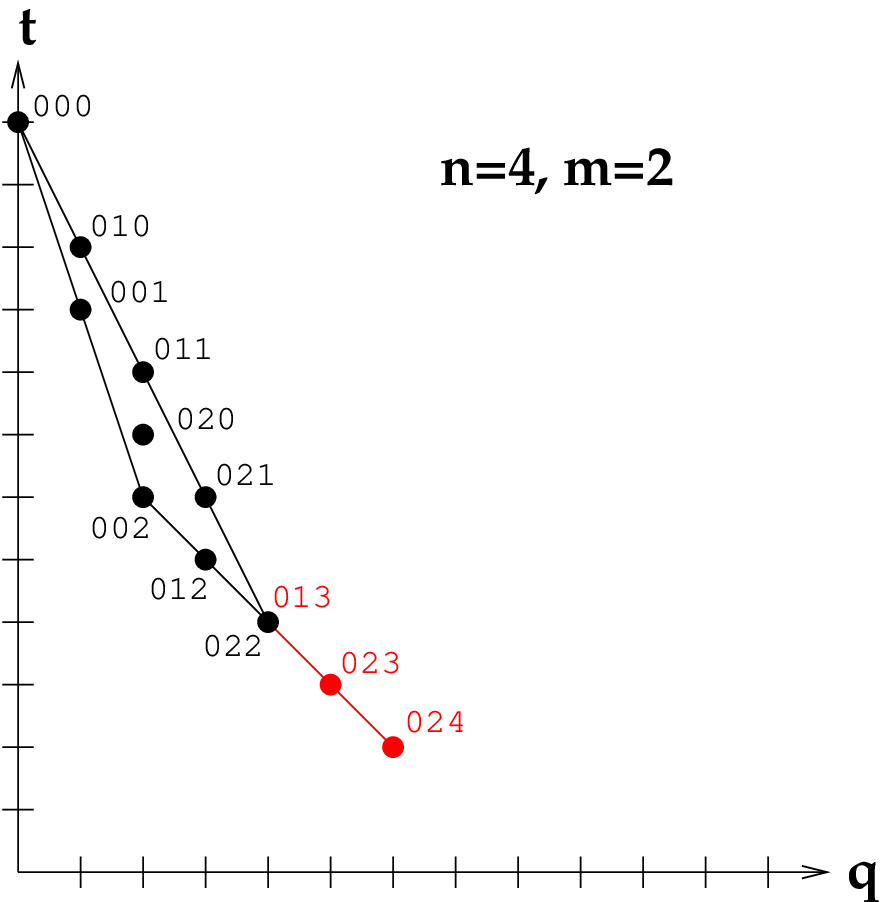,scale=.65}
\caption{Left: the triangle that contains $\Delta$ defined in Remark \ref{remark:4.2}. Right: the support of $C_T(q,t)$ in the case $n=4, m=2$ as discussed in Example \ref{ex:chain4-2}, where $\Delta=D'_1\cup D'_2$ is the set of the black lattice points, and $D'_3$ consists of the three lattice points on the red line segment.}
\label{fig:C4tri}
\end{figure}
\end{center}

Indeed, note that the set of all points 
inside or on the boundary of the triangle is given by $\{(u+v,6m-u-3v): u,v\in\mathbb{R}, 0\le u\le v\le m\}$. To be a lattice point in this triangle, either $u$ or $u-1/2$ is an integer. The lattice points with $u$ being an integer are in one-to-one correspondence with pairs $(\area(\gamma),\dinv_m(\gamma))$ for $\gamma\in D'_2$ (by letting $\gamma_2=u$, $\gamma_3=v$); the lattice points with $u-1/2$ being an integer are in one-to-one correspondence with pairs $(\area(\gamma),\dinv_m(\gamma))$ for $\gamma\in D'_1$ (by letting $\gamma_2=v+1/2$, $\gamma_3=u-1/2$).
Thus the pairs $(\area(\gamma),\dinv_m(\gamma))$ for $\gamma$ in ${D'_2}$ (resp.~$D'_1$)  form the set $\{(x,y)\in \Delta| x+y\textrm { is even}\}$ (resp.~$\{(x,y)\in \Delta| x+y\textrm { is odd}\}$). 

On the other hand, the set of all
points inside or on the boundary of the triangle can also be defined as $\{(3m-v,m+u+v): u,v\in\mathbb{R}, 0\le u\le 2m, u+m\le v\le u/2+2m\}$. The lattice points therein with $v\le 2m$ are in one-to-one correspondence with pairs $(\dinv_m(\gamma),\area(\gamma))$ for $\gamma\in D_1$ (by letting $\gamma_1=u$, $\gamma_3=v$); the lattice points therein with $v>2m$ are in one-to-one correspondence with pairs $(\dinv_m(\gamma),\area(\gamma))$ for $\gamma\in D_2$ (by letting $\gamma_1=u-v+2m$, $\gamma_2=v-m$).
Thus the pairs $(\dinv_m(\gamma),\area(\gamma))$ for $\gamma$ in ${D_2}$ (resp.~$D_1$)  form the set $\{(x,y)\in \Delta| x\ge m\}$ (resp.~$\{(x,y)\in \Delta| x<m\}$). 

Therefore both equalities in \eqref{eq:head-tail symmetry-12D} hold.
\end{rem}

\begin{example}\label{ex:chain4-2}
Let $n=4$ and $m=2$.
There are fifty-five $2$-Dyck words in $\W=\W^{(2)}_4$. 
Since $\gamma_0$ is always zero,
we use $\gamma_1\gamma_2\gamma_3(q^at^d)$ to denote the Dyck word 
$(\gamma_0,\gamma_1,\gamma_2,\gamma_3)$ with $(\area,\dinv_m)=(a,d)$. 
The $f$-chains for this set are listed below. 

(1) an $f$-chain from $246(q^{12}t^0)$ to $000(q^0t^{12})$ of length 12, 
	
(2) an $f$-chain from $235(q^{10}t^1)$ to $010(q^1t^{10})$ of length 9, 

(3) an $f$-chain from $124(q^7t^2)$ to $020(q^2t^7)$ of length 5, 

(4) an $f$-chain from $123(q^6t^3)$ to $021(q^3t^6)$ of length 3, 

(5) an $f$-chain from $224(q^8t^2)$ to $001(q^1t^9)$ of length 7, 

(6) an $f$-chain from $135(q^9t^1)$ to $011(q^2t^8)$ of length 7, 

(7) six chains of length zero: $024(q^6t^2)$, $023(q^5t^3)$, $022(q^4t^4)$, $012(q^3t^5)$, $002(q^2t^6)$, $013(q^4t^4)$. 

Figure \ref{fig:f chain 4mx4} illustrates the six $f$-chains with 
nonzero lengths, where the length of a chain is defined as the 
number of arrows.

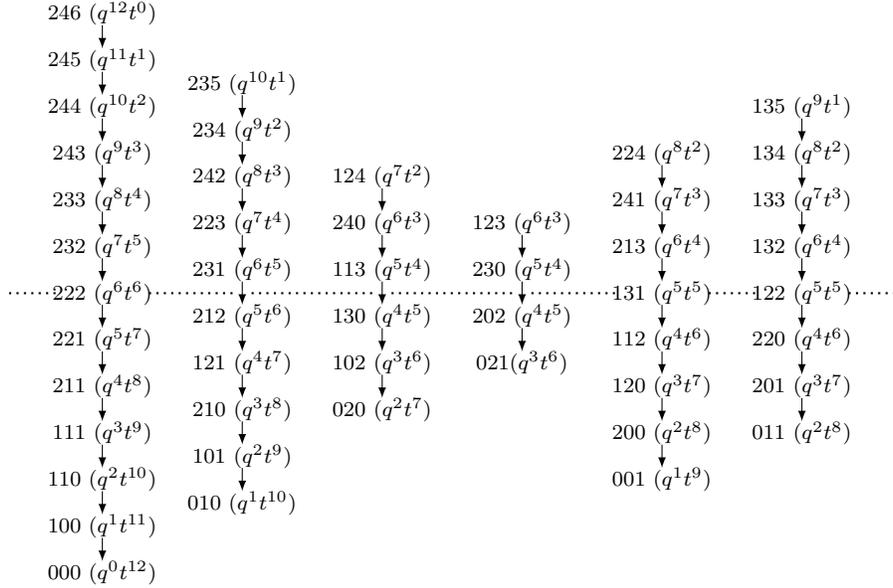
\begin{figure}[t]
{\tiny
$$
\begin{tikzpicture}[scale=.62]
\draw[dotted, thick] (-2,-6) -- (-1,-6) (1,-6) -- (2.8,-6) (3.2,-6) -- (5.8,-6) (6.2,-6) -- (8.8,-6) (9.2,-6) -- (11,-6) (13,-6) -- (14,-6) (16,-6) -- (17,-6);
\begin{scope}[>=latex]
\draw[->] (0,0-.25) -- (0,-1+.25);\draw[->] (0,-1-.25) -- (0,-2+.25);\draw[->] (0,-2-.25) -- (0,-3+.25);\draw[->] (0,-3-.25) -- (0,-4+.25);\draw[->] (0,-4-.25) -- (0,-5+.25);\draw[->] (0,-5-.25) -- (0,-6+.25);\draw[->] (0,-6-.25) -- (0,-7+.25);\draw[->] (0,-7-.25) -- (0,-8+.25);\draw[->] (0,-8-.25) -- (0,-9+.25);\draw[->] (0,-9-.25) -- (0,-10+.25);\draw[->] (0,-10-.25) -- (0,-11+.25);\draw[->] (0,-11-.25) -- (0,-12+.25);
\draw (0,0) node {246 ($q^{12}t^0$)};\draw (0,-1) node {245 ($q^{11}t^1$)};\draw (0,-2) node {244 ($q^{10}t^2$)};\draw (0,-3) node {243 ($q^{9}t^3$)};\draw (0,-4) node {233 ($q^{8}t^4$)};\draw (0,-5) node {232 ($q^{7}t^5$)};\draw (0,-6) node {222 ($q^{6}t^6$)};\draw (0,-7) node {221 ($q^{5}t^7$)};\draw (0,-8) node {211 ($q^{4}t^8$)};\draw (0,-9) node {111 ($q^{3}t^9$)};\draw (0,-10) node {110 ($q^{2}t^{10}$)};\draw (0,-11) node {100 ($q^{1}t^{11}$)};\draw (0,-12) node {000 ($q^{0}t^{12}$)};
\end{scope}
\begin{scope}[shift={(3,-1.5)}, >=latex]
\draw[->] (0,0-.25) -- (0,-1+.25);\draw[->] (0,-1-.25) -- (0,-2+.25);\draw[->] (0,-2-.25) -- (0,-3+.25);\draw[->] (0,-3-.25) -- (0,-4+.25);\draw[->] (0,-4-.25) -- (0,-5+.25);\draw[->] (0,-5-.25) -- (0,-6+.25);\draw[->] (0,-6-.25) -- (0,-7+.25);\draw[->] (0,-7-.25) -- (0,-8+.25);\draw[->] (0,-8-.25) -- (0,-9+.25);
\draw (0,0) node {235 ($q^{10}t^{1}$)};\draw (0,-1) node {234 ($q^{9}t^{2}$)};\draw (0,-2) node {242 ($q^{8}t^{3}$)};\draw (0,-3) node {223 ($q^{7}t^{4}$)};\draw (0,-4) node {231 ($q^{6}t^{5}$)};\draw (0,-5) node {212 ($q^{5}t^{6}$)};\draw (0,-6) node {121 ($q^{4}t^{7}$)};\draw (0,-7) node {210 ($q^{3}t^{8}$)};\draw (0,-8) node {101 ($q^{2}t^{9}$)};\draw (0,-9) node {010 ($q^{1}t^{10}$)};
\end{scope}
\begin{scope}[shift={(6,-3.5)}, >=latex]
\draw[->] (0,0-.25) -- (0,-1+.25);\draw[->] (0,-1-.25) -- (0,-2+.25);\draw[->] (0,-2-.25) -- (0,-3+.25);\draw[->] (0,-3-.25) -- (0,-4+.25);\draw[->] (0,-4-.25) -- (0,-5+.25);
\draw (0,0) node {124 ($q^{7}t^{2}$)};\draw (0,-1) node {240 ($q^{6}t^{3}$)};\draw (0,-2) node {113 ($q^{5}t^{4}$)};\draw (0,-3) node {130 ($q^{4}t^{5}$)};\draw (0,-4) node {102 ($q^{3}t^{6}$)};\draw (0,-5) node {020 ($q^{2}t^{7}$)};
\end{scope}
\begin{scope}[shift={(9,-4.5)}, >=latex]
\draw[->] (0,0-.25) -- (0,-1+.25);\draw[->] (0,-1-.25) -- (0,-2+.25);\draw[->] (0,-2-.25) -- (0,-3+.25);
\draw (0,0) node {123 ($q^{6}t^{3}$)};\draw (0,-1) node {230 ($q^{5}t^{4}$)};\draw (0,-2) node {202 ($q^{4}t^{5}$)};\draw (0,-3) node {021($q^{3}t^{6}$)};
\end{scope}
\begin{scope}[shift={(12,-3)}, >=latex]
\draw[->] (0,0-.25) -- (0,-1+.25);\draw[->] (0,-1-.25) -- (0,-2+.25);\draw[->] (0,-2-.25) -- (0,-3+.25);\draw[->] (0,-3-.25) -- (0,-4+.25);\draw[->] (0,-4-.25) -- (0,-5+.25);\draw[->] (0,-5-.25) -- (0,-6+.25);\draw[->] (0,-6-.25) -- (0,-7+.25);
\draw (0,0) node {224 ($q^{8}t^{2}$)};\draw (0,-1) node {241 ($q^{7}t^{3}$)};\draw (0,-2) node {213 ($q^{6}t^{4}$)};\draw (0,-3) node {131 ($q^{5}t^{5}$)};\draw (0,-4) node {112 ($q^{4}t^{6}$)};\draw (0,-5) node {120 ($q^{3}t^{7}$)};\draw (0,-6) node {200 ($q^{2}t^{8}$)};\draw (0,-7) node {001 ($q^{1}t^{9}$)};
\end{scope}
\begin{scope}[shift={(15,-2)}, >=latex]
\draw[->] (0,0-.25) -- (0,-1+.25);\draw[->] (0,-1-.25) -- (0,-2+.25);\draw[->] (0,-2-.25) -- (0,-3+.25);\draw[->] (0,-3-.25) -- (0,-4+.25);\draw[->] (0,-4-.25) -- (0,-5+.25);\draw[->] (0,-5-.25) -- (0,-6+.25);\draw[->] (0,-6-.25) -- (0,-7+.25);
\draw (0,0) node {135 ($q^{9}t^{1}$)};\draw (0,-1) node {134 ($q^{8}t^{2}$)};\draw (0,-2) node {133 ($q^{7}t^{3}$)};\draw (0,-3) node {132 ($q^{6}t^{4}$)};\draw (0,-4) node {122 ($q^{5}t^{5}$)};\draw (0,-5) node {220 ($q^{4}t^{6}$)};\draw (0,-6) node {201 ($q^{3}t^{7}$)};\draw (0,-7) node {011 ($q^{2}t^{8}$)};
\end{scope}
\end{tikzpicture}
$$}
\caption{The $f$-chains of nonzero lengths in Example \ref{ex:chain4-2}.}
\label{fig:f chain 4mx4}
\end{figure}

\noindent  

Moreover, $D_1=\{246, 235, 135\}$, $D_2=\{224,124,123,024,023,022\}$, $D_3=\{012,002,013\}$, $D'_1=\{010,020,021\}$, $D'_2=\{000, 001, 011,022,012,002\}$, $D'_3=\{024,023,013\}$.
The initial points of chains have generating function
\[ C_I(q,t)=
 q^{12}t^{0} +q^{10}t^{1} +q^{7}t^{2} +q^{6}t^{3} +q^{8}t^{2} +q^{9}t^{1}
+q^{6}t^{2} +q^{5}t^{3} +q^{4}t^{4} +q^{3}t^{5} +q^{2}t^{6} +q^{4}t^{4}. \]
The terminal points of chains have generating function (see Figure \ref{fig:C4tri} Right)
\[ C_T(q,t)=
 q^{0}t^{12} +q^{1}t^{10} +q^{2}t^{7} +q^{3}t^{6} +q^{1}t^{9} +q^{2}t^{8}
+q^{6}t^{2} +q^{5}t^{3} +q^{4}t^{4} +q^{3}t^{5} +q^{2}t^{6} +q^{4}t^{4}. \] 
We see that $C_T(q,t)=C_I(t,q)$, so that $C_W(q,t)=C_W(t,q)$. 

\end{example}

\begin{thm}\label{thm:explicit-coeffs}
For $j,k\in\N$, let $c(j,k)$ be the coefficient of $q^j t^k$ in 
$C^{(m)}_4(q,t)$. For $a\in\R$, define $\lfloor a\rfloor^+=\max(\lfloor a\rfloor,0)$. 

\noindent {\rm(a)} If $j+k>4m$, then  
\small$$
c(j,k)=\min \left(\Big\lfloor\frac{-6m+2+3j+k}{2}\Big\rfloor^+,\;
                 \Big\lfloor\frac{6m+2-j-k}{2}\Big\rfloor^+,\;
                 \Big\lfloor\frac{-6m+2+j+3k}{2}\Big\rfloor^+\right).
$$\normalsize\\
{\rm(b)} If $j+k=4m$, then 
\small$$ 
c(j,k)=\min \left(\Big\lfloor\frac{-m+2+j}{2}\Big\rfloor^+,\;
                 \Big\lfloor\frac{-m+2+k}{2}\Big\rfloor^+\right).
$$\normalsize\\
{\rm(c)} If $j+k<4m$, then $c(j,k)=0$.

\noindent As a consequence, the sequence $(c(d,0),c(d-1,1),c(d-2,2),\ldots,c(1,d-1),c(0,d))$
is unimodal for every positive integer $d$.
\end{thm}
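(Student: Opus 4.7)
The plan is to derive the closed-form coefficient formulas (a)--(c) directly from Remark~\ref{rem:coefficient} applied to the chain decomposition of $W = \W_4^{(m)}$ just constructed. Since we have already proved $C_T(q,t) = C_I(t,q)$ for $n = 4$, Remark~\ref{rem:coefficient} gives
\begin{equation*}
c(j,k) \;=\; \bigl|C_I(q,t)\bigr|^{\deg = j+k}_{\qdeg \ge j} \;-\; \bigl|C_I(q,t)\bigr|^{\deg = j+k}_{\tdeg > j},
\end{equation*}
so it suffices to enumerate the pairs $(\area(\gamma),\dinv_m(\gamma))$ for $\gamma \in I = D_1 \cup D_2 \cup D_3$ with prescribed total degree and $\area$-range.

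The key input is the description of $I$ from Lemma~\ref{lem:W-B union} together with the $\dinv_m$ formulas in \eqref{h+D123}: elements of $D_1$ give $(\area,\dinv_m)=(\gamma_1+2\gamma_2+m,\,2m-\gamma_2)$ with total degree $\gamma_1+\gamma_2+3m \in [4m+2,6m]$; elements of $D_2$ give $(\gamma_1+m+\gamma_3,\,3m-\gamma_3)$ with total degree $\gamma_1+4m \in [4m,5m]$; and elements of $D_3$ give $(\gamma_2+\gamma_3,\,4m-\gamma_2-\gamma_3)$ with total degree exactly $4m$. In particular every element of $I$ has total degree at least $4m$, which proves~(c) immediately.

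For part~(a), fix $d = j+k > 4m$; then only $D_1$ and $D_2$ contribute. A direct sweep through the parameter ranges shows that the multiset $S_d = \{\area(\gamma) : \gamma \in I,\ \area(\gamma)+\dinv_m(\gamma) = d\}$ equals the set of consecutive integers from $L = 2d-6m$ to $U = \lfloor(3d-6m)/2\rfloor$, each appearing once, with the $D_2$-values filling the lower portion $[L,\, d-m]$ (empty when $d > 5m$), the $D_1$-values filling the upper portion $[d-m+1,\, U]$, and the two pieces meeting at $d-m$ with no overlap and no gap. Given this interval description, $c(j,k) = |S_d \cap [j, U]| - |S_d \cap [L, d-j-1]|$. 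A case analysis in $j$ according to its position among the thresholds $d-U-1,\, 6m-d-1,\, L,\, U$ then expresses $c(j,k)$ as a piecewise-linear function of $j$ pinched by three independent constraints: the condition $j \ge d-U$ yields the first floor in~(a), the global cap $c(j,k) \le |S_d| = \lfloor(6m+2-d)/2\rfloor$ yields the middle floor, and the condition $j \le U$ yields the third floor. Their minimum is therefore precisely $c(j,k)$.

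Part~(b), the case $d = 4m$, is handled by the same pattern, now with contributions from $D_3$ (whose multiplicities are governed by the pairs $(\gamma_2,\gamma_3)$) and from the $\gamma_1 = 0$ slice of $D_2$. In this degenerate case the multiset $S_{4m}$ is thick enough that the global cap is never binding, so the three-expression minimum collapses to the two-expression minimum of~(b). Finally, unimodality of $(c(d,0), c(d-1,1), \ldots, c(0,d))$ is immediate from the explicit formulas: with $d$ fixed, each expression inside the minimum in~(a) is linear in $j$---the first with slope $+1$, the middle constant, the third with slope $-1$---so applying $(\cdot)^+$ and taking the minimum produces a function of $j$ that is first non-decreasing, possibly constant on a plateau, then non-increasing; the same structural argument applies to~(b). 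The main obstacle is the detailed bookkeeping needed to verify the claimed interval description of $S_d$ in~(a), and especially the parity issues at the transitions near $d = 5m$ and $a = 2m$ where the contributing subsets of $I$ change.
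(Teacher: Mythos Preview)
Your approach is correct and is essentially the paper's: both start from Remark~\ref{rem:coefficient} and reduce the computation of $c(j,k)$ to counting elements of $I$ with fixed total degree and $\area$ in a given range. The execution differs in two small ways worth noting. First, the paper invokes the joint symmetry already established to assume $j\ge k$; then the second count $|C_I|^{\deg=j+k}_{\tdeg>j}$ vanishes outright in case~(a), so only one lattice-point count survives. You keep both counts and subtract, which is fine but slightly heavier. Second, rather than working directly with the $D_i$-parameters, the paper uses the reparametrization~\eqref{eq:sum}, $C_{D_1\cup D_2}(q,t)=\sum_{u=0}^m\sum_{v=u}^{2m-u}q^{6m-u-2v}t^v$, which turns the count into the cardinality of a very simple polytope in $(u,v)$; your interval description of $S_d$ is the same information phrased differently. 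For case~(b) the paper observes that the degree-$4m$ part of $C_I$ is itself $q,t$-symmetric (equation~\eqref{eq:j+k=4m}), so the $f$-chains in that degree all have length zero and $c(j,k)$ is literally the coefficient of $q^jt^k$ in $C_{I'}$---this shortcut avoids the multiset bookkeeping you allude to.
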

\begin{proof} We may assume $j\ge k$ because of the $q,t$-joint symmetry. 
Using Remark \ref{rem:coefficient},
\begin{equation}\label{eq:chi}
c(j,k)=\Big|C_I(q,t)\Big|^{\deg=j+k}_{\qdeg\ge j}-\Big|C_I(q,t)\Big|^{\deg=j+k}_{\tdeg> j}.
\end{equation}

First, observe that all monomials in $C_I(q,t)$ have degree at least $4m$, so $c(j,k)=0$ if $j+k<4m$. This proves (c).

Then we consider the case $j+k>4m$. Because of \eqref{eq:sum}, $|C_I(q,t)|^{\deg=j+k}_{\qdeg\ge j}$ is equal to the cardinality of the set
$$\{(u,v):\; 0\le u\le m,\;  u\le v\le 2m-u,\;  6m-u-v=j+k,\;  j\le 6m-u-2v\}.$$
In this set, $\max(6m-j-2k,0)\le u\le (6m-j-k)/2$, and $v$ is determined by $u$.
Thus the cardinality is 
$\max((6m-j-k)/2-\max(6m-j-2k,0)+1,0)$, that is,
$$\min\left(\Big\lfloor \frac{-6m+2+j+3k}{2}\Big\rfloor^+, 
\Big\lfloor\frac{6m+2-j-k}{2}\Big\rfloor^+\right).$$
On the other hand, $|C_I(q,t)|^{\deg=j+k}_{\tdeg> j}$ is the cardinality of the set 
$$\{(u,v):0\le u\le m,\; u\le v\le 2m-u,\; 6m-u-v=j+k,\; j<v\}.$$
The conditions imply $0\le u\le 6m-2j-k$, but $6m-2j-k<2m-j<0$, so the set is empty. This implies (a).

Finally we consider the case $j+k=4m$. The degree-$4m$ part of $C_I(q,t)$ is equal to 
\begin{equation}\label{eq:j+k=4m}
\sum_{u=0}^{m-1}\sum_{v=m}^{u+m}q^{u+v}t^{4m-u-v}+\sum_{u=0}^mq^{2m+u}t^{2m-u}=\sum_{u=0}^{m}\sum_{v=m}^{u+m}q^{u+v}t^{4m-u-v}
\end{equation}
which is $q,t$-symmetric thanks to the bijection $(u,v)\mapsto (2m-v,2m-u)$. Therefore all $f$-chains  
have length $0$, and $c(j,k)$ is equal to the coefficient of $q^jt^k$ in \eqref{eq:j+k=4m}. This implies (b). 
\end{proof}

\section{Comparison to the Garsia-Haiman Formula}
\label{sec:proof-GH}

This section proves Conjecture~\ref{conj:qtcat} for all $m\geq 1$
and all $n\leq 4$. Fix $m\geq 1$. For $n=1$, we have
\[ AC_1^{(m)}(q,t)=\frac{(1-q)(1-t)}{(1-t)(1-q)}=1=C_1^{(m)}(q,t). \]
For $n=2$, we compute:
\begin{eqnarray*}
AC_2^{(m)}(q,t)&=&
 \frac{(q^1t^0)^{m+1}(1-q)(1-t)(1+q)(1-q)}
      {(q-t)(1-t)(1-q^2)(1-q)}
\\& & +\frac{(q^0t^1)^{m+1}(1-q)(1-t)(1+t)(1-t)}
      {(1-t^2)(1-t)(t-q)(1-q)}
\\ &=& \frac{q^m}{1-t/q}+\frac{t^m}{1-q/t}=\frac{q^{m+1}-t^{m+1}}{q-t}
\\ &=& q^m+q^{m-1}t+q^{m-2}t^2+\cdots+t^m=C_2^{(m)}(q,t).
\end{eqnarray*}

\subsection{The Case $n=3$.}
\label{subsec:GHn=3}

The partitions of $n=3$ are $(3)$, $(1,1,1)$ and $(2,1)$.
Using the Garsia-Haiman formula~\eqref{eq:GH-formula} gives
\begin{eqnarray*}
AC_3^{(m)}(q,t) &=&
\frac{(q^3t^0)^{m+1}(1-q)(1-t)(1+q+q^2)(1-q)(1-q^2)}
     {(q^2-t)(q-t)(1-t)(1-q^3)(1-q^2)(1-q)}
\\& & +\frac{(q^0t^3)^{m+1}(1-q)(1-t)(1+t+t^2)(1-t)(1-t^2)}
     {(1-t^3)(1-t^2)(1-t)(t^2-q)(t-q)(1-q)}
\\& & +\frac{(q^1t^1)^{m+1}(1-q)(1-t)(1+q+t)(1-q)(1-t)}
     {(q-t^2)(1-t)(1-t)(t-q^2)(1-q)(1-q)}.
\end{eqnarray*}
By cancelling common factors and using
the map $\sigma$ on $\Q(q,t)$ that sends $F(q,t)$ to $F(q,t)+F(t,q)$,
we can rewrite this as
\[ AC_3^{(m)}(q,t)=\sigma\left(\frac{q^{3m}}{(1-t/q^2)(1-t/q)}\right)
  +\frac{q^mt^m(1+q+t)}{(1-t^2/q)(1-q^2/t)}. \]
On the other hand, recall from~\S\ref{subsec:prove-n=3} that for $n=3$,
$I=\{(0,i,i+m):0\leq i\leq m\}$ and $C_I(q,t)=\sum_{v=0}^m q^{3m-2v}t^v$.
As $3m-2v\geq v$ for $0\leq v\leq m$, we can apply Lemma~\ref{lem:CI-to-CW}(b).
Before doing so, we rewrite $C_I(q,t)$ as follows:
\begin{eqnarray*}
C_I(q,t) &=& \sum_{v=0}^m q^{3m-2v}t^v =q^{3m}\sum_{v=0}^m (t/q^2)^v
 \\ &=& q^{3m}\left(\frac{1-(t/q^2)^{m+1}}{1-t/q^2}\right)
  =\frac{q^{3m}}{1-t/q^2}+\frac{q^mt^m}{1-q^2/t}.
\end{eqnarray*}
By the lemma,
\begin{eqnarray*}
&C_W&(q,t)\\
&=&\sigma\left(\frac{C_I(q,t)}{1-t/q}\right)
\\&=&\sigma\left(\frac{q^{3m}}{(1-t/q^2)(1-t/q)}\right)
+q^mt^m\bigg{[}\frac{1}{(1-q^2/t)(1-t/q)}+\\
& &\frac{1}{(1-t^2/q)(1-q/t)}\bigg{]}
\\&=&\sigma\left(\frac{q^{3m}}{(1-t/q^2)(1-t/q)}\right)
 +\frac{q^mt^m(1+q+t)}{(1-t^2/q)(1-q^2/t)}, 
\end{eqnarray*}
where the last equality follows by routine algebra.
We now see that $C_3^{(m)}(q,t)=AC_3^{(m)}(q,t)$.

\subsection{The Case $n=4$.}
\label{subsec:GHn=4}

The partitions of $n=4$ are $(4)$, $(1,1,1,1)$, $(3,1)$, $(2,1,1)$
and $(2,2)$. Writing out the Garsia-Haiman formula~\eqref{eq:GH-formula}
and cancelling common factors gives
\begin{eqnarray*}
 AC_4^{(m)}(q,t) &=&
 \frac{(q^6t^0)^{m+1}}{(q^3-t)(q^2-t)(q-t)}
+\frac{(q^0t^6)^{m+1}}{(t^3-q)(t^2-q)(t-q)}
\\&&+\frac{(q^3t^1)^{m+1}(1+q+q^2+t)}{(q^2-t^2)(q-t)(t-q^3)}
+\frac{(q^1t^3)^{m+1}(1+t+t^2+q)}{(q-t^3)(t^2-q^2)(t-q)}
\\&& +\frac{(q^2t^2)^{m+1}(1-qt)}{(q-t^2)(q-t)(t-q^2)(t-q)}. 
\end{eqnarray*}
We can rewrite this as 
\begin{eqnarray*}
 AC_4^{(m)}(q,t)&=&
\sigma\left(\frac{q^{6m}}{(1-t/q)(1-t/q^2)(1-t/q^3)}\right)
\\&&-\sigma\left(q^{3m}t^m\frac{t/q+t/q^2+t/q^3+t^2/q^3}
  {(1-t^2/q^2)(1-t/q)(1-t/q^3)}\right)
\\&&+q^{2m}t^{2m}\frac{q^2t^2(1-qt)}{(q-t^2)(t-q^2)(q-t)(t-q)}.
\end{eqnarray*}

Recall from Lemma~\ref{lem:W-B union} that $I=I_4^{(m)}$
is the disjoint union $I=D_1\cup D_2\cup D_3$. 
We derived formulas for $C_{D_3}(q,t)$ and $C_{D_1\cup D_2}(q,t)$
in~\eqref{eq:sum'} and~\eqref{eq:sum}. Adding these formulas gives
\begin{equation}\label{eq:newCIsum}
\aligned
C_I(q,t)
&=\sum_{u=0}^{m-1}\sum_{v=m}^{u+m}q^{u+v}t^{4m-u-v}
 +\sum_{u=0}^m\sum_{v=u}^{2m-u}q^{6m-u-2v}t^v\\
&=\sum_{u=0}^{m-1}\sum_{v=m}^{u+m}q^{u+v}t^{4m-u-v}
 +\sum_{u=0}^mq^{2m+u}t^{2m-u}
 +\sum_{u=0}^{m-1}\sum_{v=u}^{2m-u-1}q^{6m-u-2v}t^v\\
&=\sum_{u=0}^{m}\sum_{v=m}^{u+m}q^{u+v}t^{4m-u-v}
 +\sum_{u=0}^{m-1}\sum_{v=u}^{2m-u-1}q^{6m-u-2v}t^v.\\
\endaligned
\end{equation}
To continue, write $I=I'\cup I''$, where 
$I'=\{ w\in I: \area(w)+\dinv_m(w)=4m \}$ and $I''=I\setminus I'$.
Note that the first double sum in~\eqref{eq:newCIsum} is $C_{I'}(q,t)$,
and the second double summation is $C_{I''}(q,t)$.
Also write $T=T'\cup T''$ and $W=W'\cup W''$,
where $T'$ (resp. $W'$) consists of the objects $w$ in $T$ (resp. $W$)
with $\area(w)+\dinv_m(w)=4m$. The map $f$ restricts to
decompose $W'$ (resp. $W''$) into $f$-chains that go
from $I'$ to $T'$ (resp. $I''$ to $T''$), and we have
$C_{T'}(q,t)=C_{I'}(t,q)$ and $C_{T''}(q,t)=C_{I''}(t,q)$.

It is routine to check that $C_{I'}(q,t)=C_{I'}(t,q)$.
Hence, Lemma~\ref{lem:CI-to-CW}(a) applies to give 
\begin{equation}\label{eq:CI'}
\aligned
&C_{W'}(q,t)=C_{I'}(q,t)=\sum_{u=0}^m\sum_{v=m}^{u+m} q^{u+v}t^{4m-u-v}\\
&
=\sum_{u=0}^{m}q^ut^{4m-u}\sum_{v=m}^{u+m}(q/t)^v=\sum_{u=0}^{m}q^ut^{4m-u}\frac{(q/t)^m-(q/t)^{m+u+1}}{1-q/t}\\
&=\frac{q^mt^{3m}}{1-q/t}\sum_{u=0}^{m}(q/t)^u-\frac{q^{m+1}t^{3m-1}}{1-q/t}\sum_{u=0}^{m}(q^2/t^2)^u\\
&=\frac{q^mt^{3m}}{(1-q/t)^2}-\frac{q^{2m}t^{2m}\cdot q/t}{(1-q/t)^2}-\frac{q^{m}t^{3m}\cdot q/t}{(1-q/t)(1-q^2/t^2)}+\frac{q^{3m}t^{m}\cdot q^3/t^3}{(1-q/t)(1-q^2/t^2)}\\
&=\frac{q^mt^{3m}}{(1-q/t)(1-q^2/t^2)}+\frac{q^{3m}t^m}{(1-t/q)(1-t^2/q^2)}
-\frac{q^{2m}t^{2m}\cdot q/t}{(1-q/t)^2}\\
&=\sigma\Big(\frac{q^{3m}t^{m}}{(1-t/q)(1-t^2/q^2)}\Big)
-\frac{q^{2m}t^{2m}\cdot q/t}{(1-q/t)^2}.\\
\endaligned
\end{equation}

On the other hand, since $6m-u-2v\geq v$ for all $u,v$ appearing
in the second sum in~\eqref{eq:newCIsum}, we can compute
$C_{W''}(q,t)$ using Lemma~\ref{lem:CI-to-CW}(b). First we calculate 
$C_{I''}(q,t)$ to be
\begin{equation}\label{eq:CI''}
\aligned
&\sum_{u=0}^{m-1}\sum_{v=u}^{2m-u-1}q^{6m-u-2v}t^v
=\sum_{u=0}^{m-1}q^{6m-u}\sum_{v=u}^{2m-u-1}(t/q^2)^v\\
&=\sum_{u=0}^{m-1}q^{6m-u}\frac{(t/q^2)^u-(t/q^2)^{2m-u}}{1-t/q^2}\\
&=\frac{q^{6m}}{1-t/q^2}\sum_{u=0}^{m-1}(t/q^3)^u
-\frac{q^{2m}t^{2m}}{1-t/q^2}\sum_{u=0}^{m-1}(q/t)^u\\
&=\frac{q^{6m}(1-(t/q^3)^{m})}{(1-t/q^2)(1-t/q^3)}
-\frac{q^{2m}t^{2m}(1-(q/t)^{m})}{(1-t/q^2)(1-q/t)}\\
&=\frac{q^{6m}}{(1-t/q^2)(1-t/q^3)}-\frac{q^{2m}t^{2m}}{(1-t/q^2)(1-q/t)}
+\frac{q^{3m}t^m(q/t-t/q^3)}{(1-t/q^2)(1-t/q^3)(1-q/t)}\\
&=\frac{q^{6m}}{(1-t/q^2)(1-t/q^3)}-\frac{q^{2m}t^{2m}}{(1-t/q^2)(1-q/t)}
+\frac{q^{3m}t^m(q/t+1/q)}{(1-t/q^3)(1-q/t)}.\\
\endaligned
\end{equation}
Dividing by $(1-t/q)$ and applying $\sigma$, the first term here becomes
$$\sigma\left(\frac{q^{6m}}{(1-t/q)(1-t/q^2)(1-t/q^3)}\right).$$ 
The second term here, when combined with 
the second term in~\eqref{eq:CI'}, becomes
\begin{multline*}
 q^{2m}t^{2m}\left[
\frac{-q/t}{(1-q/t)^2}-\sigma\Big(\frac{1}{(1-t/q)(1-t/q^2)(1-q/t)}\Big)\right]
\\ =q^{2m}t^{2m}\frac{q^2t^2(1-qt)}{(q-t^2)(t-q^2)(q-t)(t-q)}.
\end{multline*}
Finally, the third term in~\eqref{eq:CI''},
when combined with the first term in~\eqref{eq:CI'}, becomes
\begin{multline*}
 \sigma\left(q^{3m}t^m\left[ \frac{1}
{(1-t/q)(1-t^2/q^2)}+\frac{q/t+1/q}{(1-t/q)(1-t/q^3)(1-q/t)} \right]\right)
\\ =-\sigma\left(q^{3m}t^m\frac{t/q+t/q^2+t/q^3+t^2/q^3}
{(1-t^2/q^2)(1-t/q)(1-t/q^3)}\right).
\end{multline*}
Adding up all the pieces, we get $C_4^{(m)}(q,t)=C_W(q,t)=AC_4^{(m)}(q,t)$.

\section{Rational-Slope $q,t$-Catalan Polynomials}
\label{sec:rat-slope}

As mentioned in the Introduction, the combinatorial formulas for
higher $q,t$-Catalan polynomials can be interpreted as generating
functions for ``$m$-Dyck paths,'' which are lattice paths contained
within the triangle with vertices $(0,0)$, $(mn,n)$, and $(0,n)$.
More generally, one can also define versions of the $q,t$-Catalan polynomials
counting lattice paths staying within other triangles.  
We shall focus on proving joint symmetry of rational-slope
$q,t$-Catalan polynomials for triangles of height 4. At the end of 
the section, we briefly discuss Gorsky and Mazin's
proof of joint symmetry~\cite{GM2} for triangles of height 3.

We begin by reviewing some definitions and results from~\cite{LW}.
For $r,s\in\N^+$, define an \emph{$r\times s$ Dyck path}
to be a lattice path from $(0,0)$ to $(r,s)$ that lies above
the diagonal line segment joining $(0,0)$ to $(r,s)$.
Let $L_{r,s}^+$ be the set of $r\times s$ Dyck paths. 
For a path $\pi\in L_{r,s}^+$, let $\area(\pi)$ be the number 
of lattice squares that lie entirely above the diagonal and below $\pi$. 
The lattice squares above $\pi$ in the triangle with vertices
$(0,0)$, $(0,s)$ and $(r,s)$ form a partition diagram denoted $D(\pi)$.
Define
\begin{eqnarray*}
h^+_{r/s}(\pi)&=&\sum_{c\in D(\pi)}
\chi\Big(\frac{a(c)}{l(c)+1}\le r/s< \frac{a(c)+1}{l(c)}\Big); \\
h^-_{r/s}(\pi)&=&\sum_{c\in D(\pi)}
\chi\Big(\frac{a(c)}{l(c)+1}< r/s\le \frac{a(c)+1}{l(c)}\Big). 
\end{eqnarray*}
The \emph{rational $q,t$-Catalan number for $r\times s$ Dyck paths}
is defined as
$$C_{r',s',n'}(q,t)=\sum_{\pi\in L_{r,s}^+}q^{\area(\pi)}t^{h^+_{r/s}(\pi)}$$
where $n'=\gcd(r,s)$, $r'=r/n'$, and $s'=s/n'$.  (This indexing convention
is used to match the notation in~\cite{LW}.) 
In~\cite{LW}, an involution $I:L_{r,s}^+\rightarrow L_{r,s}^+$ was
defined that preserves area and interchanges $h^+_{r/s}$ and $h^-_{r/s}$.
It follows that we could replace $h^+_{r/s}$ by $h^-_{r/s}$ in the
definition of $C_{r',s',n'}(q,t)$.

The rational $q,t$-Catalan number $C_{nm+1,n,1}(q,t)$
is exactly the same as $C_{m,1,n}(q,t)$ because the natural 
inclusion $\iota$ of $L_{nm,n}^+$ into  $L_{nm+1,n}^+$ is bijective, 
and $h^+_m(\pi)=h^+_{(mn+1)/n}(\iota(\pi))$ for $\pi\in L_{nm,n}^+$. 
Furthermore, it was shown in~\cite[Lemma 6.3.3]{HHLRU} that 
$h_m^+(\pi)=\dinv_m(\pi)$, which implies $C_{m,1,n}(q,t)=C^{(m)}_n(q,t)$.  
Thus we only need to discuss the two cases of
$(4m+2)\times 4$ and $(4m-1)\times 4$ Dyck paths.
In the following two subsections, we identify $r\times 4$ Dyck paths $\pi$
with ``$r\times 4$ Dyck words'' $(\gamma_0,\gamma_1,\gamma_2,\gamma_3)$
as follows: number the rows of the triangle zero to three, from bottom to top.
Then for $i=0,1,2,3$, the number of cells in the $i$-th row that lie 
above $\pi$ is $mi-\gamma_i$.

\subsection{$(4m+2) \times 4$ Dyck paths} 
\label{subsec:4m+2 by 4}

Assume $m\ge 1$. The analogue of $W_n^{(m)}$ in \S3.1 in the $(4m+2)\times 4$ case is
$$\W=\{\gamma=(0,\gamma_1,\gamma_2,\gamma_3):  \gamma_1\ge0,\gamma_2\ge-1,\gamma_3\ge-1, \gamma_{i+1}\le\gamma_i+m \textrm{ for } i=0,1,2\}. $$

\begin{lem}\label{lem:4m and 4m+2}
 For any $\pi\in L^+_{4m+2,4}$, we have $h^+_m(\pi)=h^-_{(4m+2)/4}(\pi)$.
\end{lem}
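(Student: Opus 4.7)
The plan is to prove the lemma by a cell-by-cell comparison inside $D(\pi)$: for each cell $c$ with $a = a(c)$ and $l = l(c)$, I will show that
\[
\chi\!\left(\tfrac{a}{l+1} \le m < \tfrac{a+1}{l}\right)
\;=\;
\chi\!\left(\tfrac{a}{l+1} < \tfrac{4m+2}{4} \le \tfrac{a+1}{l}\right).
\]
Since $h^+_m(\pi)$ and $h^-_{(4m+2)/4}(\pi)$ are both sums of these indicators over the same set $D(\pi)$, this pointwise equality yields the lemma.

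The first input is a purely geometric observation: the ambient triangle with vertices $(0,0)$, $(0,4)$, $(4m+2,4)$ contains no lattice squares in its bottom row, because a unit cell $[j,j+1]\times[0,1]$ would need $j+1 \le 0$ for its bottom-right corner to lie in the triangle. Translated to partition language, $\mu_4 = 0$, so every column of $D(\pi)$ contains at most three cells and every cell satisfies $l(c) \le 2$. A direct check of the inequalities then shows that the two indicators coincide when $l \in \{0,1\}$ (reducing to $a \le m$ and $m \le a \le 2m$ respectively), while for $l = 2$ the first indicator gives $2m \le a \le 3m$ and the second gives $2m \le a \le 3m + 1$. The two therefore agree everywhere except possibly at cells with $l(c) = 2$ and $a(c) = 3m+1$.

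To rule out this exceptional case, I use the bound $\mu_1 \le 3m+1$, which records the size of the top row of the ambient triangle (equivalently, the constraint $\gamma_3 \ge -1$ in the definition of $\W$, since $\mu_1 = 3m - \gamma_3$). Any cell with $l(c) = 2$ has two cells below it in the same column, so that column contains at least three cells of $D(\pi)$; since $\mu_4 = 0$, these must occupy rows $1,2,3$, forcing $c$ into row $1$. Hence $a(c) = \mu_1 - j$ with $j \ge 1$, and $a(c) = 3m+1$ would require $\mu_1 \ge 3m+2$, contradicting $\mu_1 \le 3m+1$. The only mild obstacle in carrying out this plan is bookkeeping: keeping the row-from-bottom convention used for $\gamma$ and the row-from-top convention used for $\mu$ straight, and tracking the half-integer $(4m+2)/4 = m + \tfrac{1}{2}$ through the inequalities that determine the two indicators.
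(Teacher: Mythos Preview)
Your proof is correct and follows essentially the same approach as the paper: a cell-by-cell comparison of the two indicators, using the facts that $l(c)\le 2$ and $a(c)\le 3m$ for cells of $D(\pi)$ to rule out the only potential discrepancy at $(a,l)=(3m+1,2)$. The paper's write-up is somewhat more concise, observing directly that neither $a(c)/(l(c)+1)$ nor $(a(c)+1)/l(c)$ can fall in the open interval $(m,\,m+\tfrac12)$ when $l\le 2$ and $a\le 3m$, but your explicit case analysis on $l\in\{0,1,2\}$ reaches the same conclusion.
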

\begin{proof} For each $\pi$, it suffices to show that for any cell 
$c\in D(\pi)$, the two conditions $a(c)/(l(c)+1)\leq m< (a(c)+1)/l(c)$ 
and $a(c)/(l(c)+1)<(4m+2)/4\le(a(c)+1)/l(c)$,
which appeared in the definition of $h^+_m(\pi)$ and $h^-_{(4m+2)/4}(\pi)$, are equivalent. Indeed, since $0\le l(c)\le 2$, the fraction $a(c)/(l(c)+1)$ is in the open interval $(m,(4m+2)/4)$ only if $a(c)=3m+1$ and $l(c)=2$; but the latter is impossible since $a(c)\le 3m$. Thus $a(c)/(l(c)+1)\notin(m,(4m+2)/4)$. Similarly, $(a(c)+1)/l(c)\notin(m,(4m+2)/4)$. 
\end{proof}

\begin{prop} $C_{2m+1,2,2}(q,t)=C_{2m+1,2,2}(t,q)$.
\end{prop}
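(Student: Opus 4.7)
The plan is to prove this by the chain-decomposition strategy of Theorem~\ref{thm:chain-symm}, closely paralleling the argument for $C^{(m)}_4(q,t)$ in \S\ref{subsec:prove-n=4}. First, I would set
\[ T = \{(0,0,\gamma_2,\gamma_3) \in \W\} \]
as the terminal set, and aim to construct a bijection $f : \W \setminus T \to \W \setminus I$ for an explicit set $I$ of initial objects, satisfying $\area(f(\gamma)) = \area(\gamma)-1$ and $h^+_m(f(\gamma)) = h^+_m(\gamma)+1$ for every $\gamma \in \W\setminus T$. Note that by Lemma~\ref{lem:4m and 4m+2} we have $h^+_m = h^-_{(4m+2)/4}$ on $L^+_{4m+2,4}$, and since the involution $I$ preserves area while swapping $h^+_{(4m+2)/4}$ and $h^-_{(4m+2)/4}$, working with $h^+_m$ is equivalent to working directly with $C_{2m+1,2,2}(q,t)$.

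To construct $f$, I would adapt the partial chain maps $f_0$ of Definition~\ref{defn:f0} and $f_1$ of Definition~\ref{def:f1}, reusing essentially the same piecewise formulas. The verification of the statistic-preservation identities (analogues of Lemmas~\ref{lem:f0-dinv} and~\ref{lem: dinv of tilde f}) and the bijectivity/inverse statement (analogue of Lemma~\ref{lem:f0-bij}) must be rechecked in light of the relaxed boundary condition $\gamma_2,\gamma_3\ge -1$ in the definition of $\W$. I would then produce analogues of Lemmas~\ref{lem:gamma-A} and~\ref{lem:W-B union}, yielding an explicit decomposition $I = D_1 \cup D_2 \cup D_3$ and a parallel partition $T = D_1' \cup D_2' \cup D_3'$ into easily-parameterized classes.

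Once the chain structure is in place, the remaining task is to verify the key hypothesis $C_T(q,t) = C_I(t,q)$. As in \S\ref{subsec:prove-n=4}, I would split this into a ``small-support'' part (involving $D_3, D_3'$) that admits a direct bijective proof, and a ``large-support'' part (involving $D_1 \cup D_2$ versus $D_1' \cup D_2'$) that can be established either through iterated auxiliary maps $g, g'$ adjusting one statistic at the expense of the other, or by matching both generating functions to a common triangular sum of monomials in the spirit of Remark~\ref{remark:4.2}. Plugging the resulting equality into Theorem~\ref{thm:chain-symm} then yields $C_{2m+1,2,2}(q,t) = C_{2m+1,2,2}(t,q)$.

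The main obstacle will be the combinatorial bookkeeping rather than any new conceptual ingredient: the enlarged set $\W$, obtained by allowing $\gamma_2$ or $\gamma_3$ to equal $-1$, introduces several new boundary subcases that must be checked one-by-one when verifying that the images of $f_0$ and $f_1$ are disjoint, that $\W\setminus A$ reduces cleanly to $T$, and that the explicit statistic formulas (analogues of \eqref{h+D123} and \eqref{h+D'123}) remain correct near the new boundary. Although no genuinely new ideas are required beyond those in \S\ref{subsec:prove-n=4}, the case analysis and the recomputation of the piecewise generating functions will need to be carried out afresh.
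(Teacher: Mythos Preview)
Your overall strategy is right, and it is the one the paper uses, but you have committed to the wrong terminal set. You propose $T=\{(0,0,\gamma_2,\gamma_3)\in\W\}$, mirroring \S\ref{subsec:prove-n=4}. In the $(4m+2)\times 4$ case this fails: with the same $f_0,f_1$ formulas, any $\gamma=(0,\gamma_1,-1,\gamma_3)\in\W$ with $\gamma_1>0$ has $r(\gamma)=2$ and $f_0(\gamma)=(0,-1,\gamma_3,\gamma_1-1)\notin\W$ (since the first coordinate after $\gamma_0$ must be $\geq 0$). So such $\gamma$ must be excluded from $A_0$; they are also not in $A_1$ (since $\gamma_2-\gamma_3=-1-\gamma_3\leq 0$), hence they lie in $\W\setminus A$ but not in your $T$. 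The correct terminal set is $\W\setminus A=\{(0,\gamma_1,-1,\gamma_3)\in\W\}$, and the paper's proof is organized around this.

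Consequently your anticipated three-piece decompositions $I=D_1\cup D_2\cup D_3$ and $T=D_1'\cup D_2'\cup D_3'$ do not materialize: the paper obtains only two pieces on each side. The initial set is $I=D_1\cup D_2$ with
\[
D_1=\{(0,\gamma_1,\gamma_2,\gamma_2+m):0\le\gamma_1\le m\le\gamma_2\le\gamma_1+m\},\quad
D_2=\{(0,\gamma_1,m,\gamma_3):0\le\gamma_1\le m-1,\ \gamma_1+m\le\gamma_3\le 2m-1\},
\]
and $T=D_1'\cup D_2'$ is split according to whether $\gamma_1>\gamma_3$ or $\gamma_1\le\gamma_3$ inside $\{(0,\gamma_1,-1,\gamma_3)\}$. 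The verification $C_T(q,t)=C_I(t,q)$ then proceeds exactly by the triangular matching you describe (the analogue of Remark~\ref{remark:4.2}), with $\Delta$ the lattice points in the triangle with vertices $(0,6m+2)$, $(m,3m+2)$, $(2m,2m+2)$; there is no separate ``$D_3,D_3'$'' piece to handle. Once you correct the terminal set, the rest of your plan goes through as written.
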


\begin{proof}
We can replace $h^{+}_{(4m+2)/4}$ by $h^{-}_{(4m+2)/4}$ using the involution at $(4m+2)/4$ defined in \cite{LW} and  then replace the latter by  $h^{+}_m$, thanks to Lemma \ref{lem:4m and 4m+2}. Thus we need to prove
\begin{equation}\label{eq:4m+2 to 4m}
\sum_{\pi\in L_{4m+2,4}^{+}} q^{\area(\pi)}t^{h^{+}_{m}(\pi)}=\sum_{\pi\in L_{4m+2,4}^{+}} t^{\area(\pi)}q^{h^{+}_{m}(\pi)}.
\end{equation}

The proof of (\ref{eq:4m+2 to 4m}) is similar to that for the $4m\times 4$ case (\S4.3).
The bijection $f_0$ is defined the same way as Definition \ref{defn:f0} except that the domain $A_0$ consists of $\gamma\in \W$ that satisfy
the following condition:  let $q\ge2$ be the smallest integer such that $\gamma_q-\gamma_{q-2}\le m$, or let $q=n(=4)$ if there is no such integer, 
then  $\gamma_{q-1}-1\le \gamma_{n-1}+m$ and $\gamma_{q-1}\ge0$; moreover, $\gamma_2\neq -1$ (otherwise $f(\gamma)=(0,-1,*,*)$ is not in $\W$). A case-by-case study of $q=2,3,4$ shows
$$\W\setminus A_0=\{(0,\gamma_1,-1,\gamma_3)\in\W\}\cup\{\gamma\in\W: \gamma_2>m,\; \gamma_3<\gamma_2-m-1\},$$
and $B_0=f_0(A_0)$ consists of $\gamma\in\W$ such that one of the following holds:

(a)  $\gamma_3\le m-1$, or

(b) $\gamma_3>m-1$, $\gamma_3\le \gamma_1+m-1$, $\gamma_2\le \gamma_3+m+1$, or

(c) $\gamma_2>m$, $\gamma_3-\gamma_1>m-1$, $\gamma_3\le \gamma_2+m-1$.

\noindent Further computation shows
\small$$
\W\setminus B_0=\{\gamma\in\W: \gamma_1+m\le\gamma_3\le\gamma_2+m-1, \gamma_2\le m\}\cup\{\gamma\in\W: \gamma_1\le\gamma_2, \gamma_3=\gamma_2+m\}.
$$\normalsize

Next, $A_1, B_1, f_1, A, B,$ and $f$ are defined by the same formula as Definition \ref{def:f1} (but the meaning of $\W$ is different). Lemma \ref{lem:gamma-A} should be revised to say:
$$\W\setminus A=\{(0,\gamma_1,-1,\gamma_3)\in\W\}.$$
The map $f:A\to B$ is also a bijection and changes $(\area,h^+_m)$ to $(\area-1,h^+_m+1)$.
Lemma \ref{lem:W-B union} should be revised to say:

{\rm(1)} $B_1=\{\gamma\in\W: \gamma_2\le m-1, \gamma_3-\gamma_1\ge m\}$.

{\rm(2)} $I=\W\setminus B$ is equal to the disjoint union $D_1\cup D_2$, where
$$\aligned
&D_1=\{(0,\gamma_1,\gamma_2,\gamma_2+m): 0\le\gamma_1\le m\le \gamma_2\le\gamma_1+m\},\\
&D_2=\{(0,\gamma_1,m,\gamma_3):0\le\gamma_1\le m-1, \gamma_1+m\le\gamma_3\le 2m-1\}.
\endaligned$$

We claim that the analogue of \eqref{eq:head-tail symmetry} still holds.
Note that $\W\setminus A=D'_1\cup D'_2$, where $D'_1=\{(0,\gamma_1,-1,\gamma_3)\in\W : \gamma_1> \gamma_3\}$, and 
$D'_2=\{(0,\gamma_1,-1,\gamma_3)\in\W : \gamma_1\le \gamma_3\}$. 
Moreover,
$$
(\area(\gamma),h^+_m(\gamma))=
\begin{cases}
(\gamma_1+\gamma_3+1,6m+1-3\gamma_1-\gamma_3),& \textrm{ if } \gamma\in D'_1;\\
(\gamma_1+\gamma_3+1,6m-\gamma_1-3\gamma_3), & \textrm{ if } \gamma\in D'_2;\\
(\gamma_1+2\gamma_2+2, 2m-\gamma_2),& \textrm{ if } \gamma\in D_1;\\
(\gamma_1+\gamma_3+m+2,3m-\gamma_3), & \textrm{ if } \gamma\in D_2.\\
\end{cases}
$$
The analogue of (\ref{eq:head-tail symmetry-12D}) holds where $\Delta$ is the set of lattice points that are either inside or on the boundary of the triangle with vertices $(0,6m+2)$, $(m,3m+2)$, $(2m,2m+2)$.
Indeed, the $(\area,h^+_m)$ pairs for $\gamma$ in ${D'_1}$ (resp. $D'_2$) form the set $\{(x,y)\in \Delta: x+y\textrm { is even}\}$ (resp.~$\{(x,y)\in \Delta: x+y\textrm { is odd}\}$), and the $(h^+_m,\area)$ pairs for $\gamma$ in $D_1$ (resp.~$D_2$) form the set $\{(x,y)\in \Delta: x\le m\}$ (resp.~$\{(x,y)\in \Delta: x>m\}$).
\end{proof}

We can express $C_{2m+1,2,2}$ in the form similar to Garsia-Haiman formula as
\small
$$
\sigma\Big(
q^{6 m}\frac{q^8}{(q - t)(q^2 - t)(q^3 - t)} - 
 q^{3 m} t^m\frac{q^4t(1 + q)}{(q - t)^2(q^3 - t)} 
\Big)
+ 
 q^{2 m} t^{2 m}\frac{q^2 t^2(q^2t+qt^2-q^2-t^2)}{(q - t)^2(q^2 - t)(t^2-q)}.
$$
\normalsize
Moreover, we have the following analogue of Theorem \ref{thm:explicit-coeffs} and the proof is omitted.
\begin{thm}
For $j,k\in\N$, let $c(j,k)$ be the coefficient of $q^j t^k$ in 
$C_{2m+1,2,2}(q,t)$. For $a\in\R$, define $\lfloor a\rfloor^+=\max(\lfloor a\rfloor,0)$. 
Then  for $4m+2\le j+k\le 6m+2$,
\small$$
c(j,k)=\min \left(\Big\lfloor\frac{-6m+3j+k}{2}\Big\rfloor^+,\;
                 \Big\lfloor\frac{6m+4-j-k}{2}\Big\rfloor^+,\;
                 \Big\lfloor\frac{-6m+j+3k}{2}\Big\rfloor^+\right).
$$
\normalsize
\noindent Otherwise $c(j,k)=0$. As a consequence, the sequence $(c(d,0),c(d-1,1),c(d-2,2),\ldots,c(1,d-1),c(0,d))$
is unimodal for every positive integer $d$.
\end{thm}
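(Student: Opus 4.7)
The plan is to parallel the proof of Theorem~\ref{thm:explicit-coeffs}, exploiting the chain decomposition in the $(4m+2)\times 4$ setting together with the explicit description of the initial set $I=D_1\cup D_2$ from the preceding proposition.

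First, using the joint symmetry $C_{2m+1,2,2}(q,t)=C_{2m+1,2,2}(t,q)$ just established, I would assume $j\ge k$ without loss of generality; the case $j<k$ follows by swapping $q$ and $t$, which interchanges the first and third arguments of the minimum. By Remark~\ref{rem:coefficient} applied to the chain map $f$ built in the proof of the preceding proposition,
$$c(j,k) = \Big|C_I(q,t)\Big|^{\deg=j+k}_{\qdeg\ge j} - \Big|C_I(q,t)\Big|^{\deg=j+k}_{\tdeg>j}.$$

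Next I would write $C_I=C_{D_1}+C_{D_2}$ using the $(\area,h_m^+)$ table tabulated earlier: on $D_1$ the bistatistic is $(\gamma_1+2\gamma_2+2,\,2m-\gamma_2)$ with $0\le\gamma_1\le m$ and $m\le\gamma_2\le\gamma_1+m$; on $D_2$ it is $(\gamma_1+\gamma_3+m+2,\,3m-\gamma_3)$ with $0\le\gamma_1\le m-1$ and $\gamma_1+m\le\gamma_3\le 2m-1$. In both cases, once the total degree $j+k$ is fixed, the second free coordinate ($\gamma_2$ or $\gamma_3$) is determined by $\gamma_1$. Moreover every monomial in $C_I$ has total degree in $[4m+2,6m+2]$, so immediately $c(j,k)=0$ outside this range, which is the vanishing statement of the theorem.

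For $j+k\in[4m+2,6m+2]$ with $j\ge k$, I would then count the integer values of $\gamma_1$ whose corresponding bistatistic $(\area,h_m^+)$ satisfies the total-degree condition together with $\area\ge j$ (first term) or $h_m^+>j$ (second term). Each such count becomes the number of integers in an interval whose endpoints are affine in $j+k$ and $j-k$, clipped by the box conditions $\gamma_1\in[0,m]$ or $[0,m-1]$, and so each becomes the positive part of a floor of a linear form. After subtracting, the answer can be identified with the minimum of the three floor expressions in the statement. Geometrically, the three floor-of-linear-form expressions correspond to the three sides of the supporting triangle with vertices $(0,6m+2)$, $(m,3m+2)$, $(2m,2m+2)$, and the minimum records the perpendicular-like distance from $(j,k)$ to the nearest side.

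The principal obstacle will be the case analysis in this last step: one must partition $\{(j,k):j\ge k,\ 4m+2\le j+k\le 6m+2\}$ into sub-regions according to which of the three linear forms is the binding constraint, then carefully track how the $D_1$ and $D_2$ contributions, together with the interaction between the $\qdeg\ge j$ and $\tdeg>j$ pieces, combine through the positive-part operators at the boundary. Once the explicit formula is in hand, the unimodality consequence is automatic, since along each antidiagonal $j+k=d$ each of the three floor-of-linear-form expressions is a monotone or constant function of $j$, and the pointwise minimum of such functions is unimodal.
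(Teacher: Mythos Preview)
Your approach is exactly what the paper intends: the proof in the paper is omitted with the remark that it is ``the analogue of Theorem~\ref{thm:explicit-coeffs},'' and you carry out precisely that analogue via Remark~\ref{rem:coefficient} and the description of $I=D_1\cup D_2$ from the $(4m+2)\times 4$ proposition.

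Two small points to watch when you execute the computation. First, your claim that for $D_2$ ``the second free coordinate $\gamma_3$ is determined by $\gamma_1$ once the total degree is fixed'' is not right: on $D_2$ the total degree $(\gamma_1+\gamma_3+m+2)+(3m-\gamma_3)=\gamma_1+4m+2$ is independent of $\gamma_3$, so it is $\gamma_1$ that is pinned by $j+k$ and $\gamma_3$ that ranges over an interval. This does not break the argument---the count is still a lattice-point count in an interval---but the bookkeeping differs from $D_1$. Second, it is cleaner (and closer to the proof of Theorem~\ref{thm:explicit-coeffs}) to bypass the separate $D_1,D_2$ parametrizations and instead use the fact, established in the proposition, that the pairs $(h_m^+,\area)$ for $\gamma\in I$ are exactly the lattice points of the triangle $\Delta$ with vertices $(0,6m+2)$, $(m,3m+2)$, $(2m,2m+2)$. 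Intersecting $\Delta$ with the line $x+y=j+k$ and the half-planes $y\ge j$, $x>j$ gives the two counts directly; for $j\ge k$ the constraint $2x+y\le 6m+2$ forces $x\le 2m<j$, so the subtracted term vanishes, and the remaining count reduces immediately to $\min\big(\lfloor(-6m+j+3k)/2\rfloor,\lfloor(6m+4-j-k)/2\rfloor\big)^+$, which equals the three-term minimum in the statement since $-6m+3j+k\ge -6m+j+3k$ when $j\ge k$. Your unimodality argument is correct as stated.
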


\subsection{$(4m-1) \times 4$ Dyck paths}\label{4m-1} 
\label{subsec:4m-1 by 4}

Assume $m\ge 1$.  The analogue of $W_n^{(m)}$ in \S3.1 in the $(4m-1)\times 4$ case is
$$\W=\{\gamma=(0,\gamma_1,\gamma_2,\gamma_3):  \gamma_1,\gamma_2,\gamma_3\ge1,\gamma_{i+1}\le\gamma_i+m \textrm{ for } i=0,1,2\}. $$

\begin{lem}\label{lem:4m and 4m-1}
 For any $\pi\in L^+_{4m-1,4}$, $h^-_m(\pi)=h^+_{(4m-1)/4}(\pi)$.
\end{lem}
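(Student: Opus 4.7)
The plan is to mimic the proof of Lemma~\ref{lem:4m and 4m+2} almost verbatim. The statistics $h^-_m(\pi)$ and $h^+_{(4m-1)/4}(\pi)$ are cell-wise sums of indicator functions of the conditions
\[ a(c)/(l(c)+1) < m \le (a(c)+1)/l(c) \qquad\text{and}\qquad a(c)/(l(c)+1) \le (4m-1)/4 < (a(c)+1)/l(c), \]
respectively. Since $(4m-1)/4 = m - 1/4 < m$, these two indicators at a fixed $c$ agree provided neither of the rationals $a(c)/(l(c)+1)$ nor $(a(c)+1)/l(c)$ lies strictly inside the short interval $\bigl((4m-1)/4,\,m\bigr)$ of length $1/4$. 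So the entire statement reduces to ruling out this possibility cell by cell.

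First I would establish the geometric bound $0\le l(c)\le 2$ for every $c\in D(\pi)$, exactly as in the $(4m+2)$-case. The diagonal from $(0,0)$ to $(4m-1,4)$ passes through $(3/4,\,1)$ when $m=1$ and further to the right for larger $m$, so no unit square in the bottom strip $0\le y\le 1$ lies entirely above the diagonal; hence the partition $D(\pi)$ has at most three nonempty rows.

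Next I would dispatch the remaining case check by a clean integrality argument. Clearing denominators, $a(c)/(l(c)+1)\in\bigl(m-1/4,\,m\bigr)$ would force $4a(c)$ to be a multiple of $4$ lying in the open interval $\bigl(4m(l(c)+1)-(l(c)+1),\,4m(l(c)+1)\bigr)$, whose length $l(c)+1\le 3$ is strictly less than $4$; no such multiple of $4$ exists. The same reasoning, applied to $(a(c)+1)/l(c)\in\bigl(m-1/4,\,m\bigr)$, produces an open interval for $4(a(c)+1)$ of length $l(c)\le 2<4$, again containing no multiple of $4$. With both rationals excluded from $\bigl((4m-1)/4,\,m\bigr)$, the two indicator functions coincide on each cell, and summing over $c\in D(\pi)$ gives the claimed equality.

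I do not expect any real obstacle: the argument is driven entirely by the leg bound $l(c)\le 2$ together with the smallness of the gap between the slopes $(4m-1)/4$ and $m$, and both ingredients are already in place from the proof of Lemma~\ref{lem:4m and 4m+2}.
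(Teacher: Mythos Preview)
Your proposal is correct and follows essentially the same approach as the paper's one-line proof: the paper simply says to mimic Lemma~\ref{lem:4m and 4m+2} using the fact that no fraction with denominator at most $3$ lies in the open interval $((4m-1)/4,\,m)$, and your integrality argument is exactly a spelled-out version of that fact (note that your argument works not merely because the interval has length $<4$, but because its right endpoint $4m(l(c)+1)$ is itself a multiple of $4$, which you implicitly use). Your justification of the bound $l(c)\le 2$ is slightly more elaborate than needed---it suffices to observe that the first step of any path in $L^+_{4m-1,4}$ must be north, so $D(\pi)$ has at most three nonempty rows---but the conclusion is the same.
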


\begin{proof} Similar to the proof of Lemma \ref{lem:4m and 4m+2}. Use the fact that there is no fraction in the open interval $((4m-1)/4,m)$ with denominator at most $3$.
\end{proof}

We need the following property of the involution $I_{{4m}/{4}}$ defined in \cite{LW}: the restriction of $I_{{4m}/{4}}$ to $L^+_{4m-1,4}$, denoted by $I$, is an involution since it does not change the number of arrows heading to the vertex 0 in the multigraph. This involution $I$ exchanges $h^+_m$ with $h^-_m$ and keeps $\area$ unchanged.

\begin{prop} \label{prop:4m-1,4}
$C_{4m-1,4,1}(q,t)=C_{4m-1,4,1}(t,q)$.
\end{prop}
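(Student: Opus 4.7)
The plan is to mirror the strategy carried out in Subsection~\ref{subsec:4m+2 by 4} for the $(4m+2)\times 4$ case. The first step is to replace the statistic $h^+_{(4m-1)/4}$ appearing in the definition of $C_{4m-1,4,1}(q,t)$ by the much more combinatorially tractable statistic $h^+_m$. By Lemma~\ref{lem:4m and 4m-1}, $h^+_{(4m-1)/4}(\pi)=h^-_m(\pi)$ for every $\pi\in L^+_{4m-1,4}$, and by the involution $I$ just recalled before the proposition (the restriction of $I_{4m/4}$ to $L^+_{4m-1,4}$), $h^-_m$ is exchanged with $h^+_m$ while $\area$ is preserved. Composing these two observations yields
\[
 C_{4m-1,4,1}(q,t)=\sum_{\pi\in L^+_{4m-1,4}} q^{\area(\pi)}t^{h^+_m(\pi)},
\]
so it suffices to prove that this generating function is invariant under $q\leftrightarrow t$.

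For the second step I would identify $L^+_{4m-1,4}$ with the set $\W$ of $(4m-1)\times 4$ Dyck words as described in Subsection~\ref{4m-1}, namely $\W=\{(0,\gamma_1,\gamma_2,\gamma_3):\gamma_1,\gamma_2,\gamma_3\geq 1,\ \gamma_{i+1}\leq\gamma_i+m\}$, and set up the chain framework of Section~\ref{sec:symm-chains}. Following Definitions~\ref{defn:f0} and~\ref{def:f1}, I would define a ``default'' partial chain map $f_0$ by the same rotation-and-decrement formula as before, and an ``auxiliary'' map $f_1$ on $m$-Dyck words $\gamma$ with $\gamma_2-\gamma_3>m+1$, restricting the domains to the $\gamma$'s whose images still lie in $\W$ (i.e., all coordinates of $f_0(\gamma)$ or $f_1(\gamma)$ are at least $1$). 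Lemmas~\ref{lem:f0-dinv} and~\ref{lem: dinv of tilde f} then show that $f:=f_0\cup f_1$ decreases $\area$ by $1$ and increases $h^+_m=\dinv_m$ by $1$ on its domain, just as in Subsection~\ref{subsec:prove-n=4}.

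The third step is the case analysis: carry out the analogues of Lemmas~\ref{lem:gamma-A} and~\ref{lem:W-B union} to give explicit descriptions of the terminal set $T=\W\setminus A$ (the $\gamma$'s for which at least one coordinate of the prospective image would drop below $1$) and of the initial set $I=\W\setminus B$ as a disjoint union of a few sub-families indexed by the value of $r'(\gamma)\in\{2,3,4\}$. Once these descriptions are in place, I would stratify both $T$ and $I$ by total degree $\area+h^+_m$ and, using the same telescoping-by-a-shift-map technique used in Remark~\ref{remark:4.2} and the $(4m+2)\times 4$ argument, identify both $\sum_{\gamma\in T}q^{\area}t^{h^+_m}$ and $\sum_{\gamma\in I}t^{\area}q^{h^+_m}$ with the same explicit generating function over certain lattice points of a triangle. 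This verifies $C_T(q,t)=C_I(t,q)$, after which Theorem~\ref{thm:chain-symm} concludes the proof.

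The main obstacle is the bookkeeping in step three: the strict lower bound $\gamma_i\geq 1$ (rather than $\gamma_i\geq 0$) forces several boundary subcases in the $f_0$-, $f_1$-analysis to be treated differently from the $(4m)\times 4$ and $(4m+2)\times 4$ cases, and a careful enumeration of which ``corner'' $\gamma$'s are pushed out of $A_0\cup A_1$ (respectively, out of $B_0\cup B_1$) is needed to produce the precise triangle of lattice points indexing $I$ and $T$. Once that enumeration is done, the identification of $C_I(t,q)$ with $C_T(q,t)$ is a routine computation paralleling equations \eqref{eq:head-tail symmetry-12}--\eqref{eq:head-tail symmetry-3}.
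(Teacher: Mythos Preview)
Your plan is correct and follows essentially the same route as the paper: reduce to $h^+_m$ via Lemma~\ref{lem:4m and 4m-1} and the involution $I$, build the chain map $f=f_0\cup f_1$ on the word model $\W$ with the domain adjusted for the constraint $\gamma_i\geq 1$, explicitly describe $T=\W\setminus A$ and $I=\W\setminus B$ as small unions $D'_1\cup D'_2\cup D'_3$ and $D_1\cup D_2\cup D_3$, and then verify $C_T(q,t)=C_I(t,q)$ by matching both to the lattice points of a triangle (here with vertices $(0,6m-3)$, $(m-1,3m)$, $(2m-2,2m+1)$). The only thing to add is that the bookkeeping you flag as the main obstacle is exactly what the paper carries out, and the concrete outcome is $T=\{(0,1,\gamma_2,\gamma_3)\in\W:\gamma_2\le m\}$ together with the three-piece description of $I$; once those are written down, the triangle identification and the $D_3\leftrightarrow D'_3$ bijection go through just as in \S\ref{subsec:prove-n=4}.
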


\begin{proof}
Since $h^{+}_{(4m-1)/4}(\pi)=h^{-}_{m}(\pi)=h^{+}_m(I(\pi))$ and $\area(\pi)=\area(I(\pi))$, Proposition~\ref{prop:4m-1,4} is equivalent to the following after the substitution of $\pi$ by $I(\pi)$:
\begin{equation}\label{eq:4m-1 to 4m}
\sum_{\pi\in L_{4m-1,4}^{+}} q^{\area(\pi)}t^{h^{+}_m(\pi)}=\sum_{\pi\in L_{4m-1,4}^{+}} t^{\area(\pi)}q^{h^{+}_m(\pi)}.
\end{equation}
The proof of (\ref{eq:4m-1 to 4m}) is again similar to the $4m\times 4$ case (\S4.3). The bijection $f_0$ is defined the same way as in Definition \ref{defn:f0} except that the domain $A_0$ is the subset of $W$ consisting of $\gamma=(\gamma_0,\dots,\gamma_{n-1})$ (where $n=4$) that satisfies the following condition: let $q\ge2$ be the smallest integer such that $\gamma_q-\gamma_{q-2}\le m$, or let $q=n$ if there is no such integer, then  $\gamma_{q-1}-1\le \gamma_{n-1}+m$ and $\gamma_{q-1}>1$. Then
$$\W\setminus A_0=\{(0,1,\gamma_2,\gamma_3)\in\W: \gamma_2\le m\}\cup\{\gamma\in\W: \gamma_2-1>\gamma_3+m\}.$$
Next, $A_1, B_1, f_1, A, B,$ and $f$ are defined by the same formula as Definition \ref{def:f1} (although the meaning of $\W$ is different). Lemma \ref{lem:gamma-A} should be revised to say:
$$\W\setminus A=\{(0,1,\gamma_2,\gamma_3)\in\W: \gamma_2\le m\}.$$
The map $f:A\to B$ is a bijection and changes $(\area,h^+_m)$ to $(\area-1,h^+_m+1)$.
Lemma \ref{lem:W-B union} should be revised to say:

{\rm(1)} $B_1=\{\gamma\in\W: \gamma_1\ge2, \gamma_2\le m-1, \gamma_3-\gamma_1\ge m\}$.

{\rm(2)} $\W\setminus B$ is equal to the disjoint union $D_1\cup D_2\cup D_3$, where
$$
\aligned
&D_1=\{(0,\gamma_1,\gamma_2,\gamma_2+m): 2\le\gamma_1\le m\le \gamma_2\le\gamma_1+m\}\cup\{(0,1,m+1,2m+1)\},\\
&D_2=\{(0,\gamma_1,m,\gamma_3): \gamma_1\ge 2,\; \gamma_1+m\le\gamma_3\le 2m-1\},\\
&D_3=\{(0,1,\gamma_2,\gamma_3): 1\le\gamma_2\le m,\; 1+m\le \gamma_3\le\gamma_2+m\}.
\endaligned
$$

We claim that the analogue of \eqref{eq:head-tail symmetry} still holds. Indeed, $\W\setminus A=D'_1\cup D'_2\cup D'_3$, where 
$D'_1=\{(0,1,\gamma_2,\gamma_3)\in\W : \gamma_3<\gamma_2\le m\}$, 
$D'_2=\{(0,1,\gamma_2,\gamma_3)\in\W : \gamma_2\le \gamma_3\le m\}$, 
$D'_3=\{(0,1,\gamma_2,\gamma_3)\in\W :\gamma_2\le m, \gamma_3>m\}$. 
Moreover, 
$$
(\area(\gamma),h^+_m(\gamma))=
\begin{cases}
(\gamma_2+\gamma_3-2,\; 6m+2-3\gamma_2-\gamma_3), & \textrm{ if } \gamma\in D'_1;\\
(\gamma_2+\gamma_3-2,\; 6m+1-\gamma_2-3\gamma_3),& \textrm{ if } \gamma\in D'_2;\\
(\gamma_2+\gamma_3-2,\; 4m-\gamma_2-\gamma_3), & \textrm{ if } \gamma\in D'_3;\\
(\gamma_1+2\gamma_2+m-3,\;2m-\gamma_2),& \textrm{ if } \gamma\in D_1;\\
(\gamma_1+\gamma_3+m-3,\;3m-\gamma_3), & \textrm{ if } \gamma\in D_2;\\
(\gamma_2+\gamma_3-2,\;4m-\gamma_2-\gamma_3), & \textrm{ if } \gamma\in D_3.\\
\end{cases}
$$
The analogue of (\ref{eq:head-tail symmetry-12D}) holds where $\Delta$ is the set of lattice points that are either inside or on the boundary of the triangle with vertices $(0,6m-3)$, $(m-1,3m)$, $(2m-2,2m+1)$.
The symmetry follows from the observation that the $(\area,h^+_m)$ pairs for $\gamma$ in ${D'_1}$ (resp. $D'_2$) form the set $\{(x,y)\in \Delta: x+y\textrm { is even}\}$ (resp. $\{(x,y)\in \Delta: x+y\textrm { is odd}\}$), while the $(h^+_m,\area)$ pairs for $\gamma$ in $D_1$ (resp. $D_2$) form the set $\{(x,y)\in \Delta: x\le m\}$ (resp. $\{(x,y)\in \Delta: x>m\}$).

The analogue of (\ref{eq:head-tail symmetry-3}) can be proved by the one-to-one correspondence from $D'_3$ to $D_3$ that sends $(\gamma_2,\gamma_3)$ to $(2m+1-\gamma_3,2m+1-\gamma_2)$.
\end{proof}

\begin{example} \label{ex:chain4-2'}
Consider the $(4m-1)\times 4$ case for $m=2$. There are thirty objects
in $\W_{4m-1,4}$, which produce $f$-chains of nonzero lengths listed in Figure \ref{fig:f chain (4m-1)x4} as well as  
three chains of length zero: $124 (q^4t^2), 113 (q^2t^4), 123 (q^3t^3)$. 
These are subchains of the chains in Example~\ref{ex:chain4-2}. Note that $\gamma_1,\gamma_2,\gamma_3>0$.

\begin{figure}[t]
{\tiny
$$
\begin{tikzpicture}[scale=.8]
\draw[dotted, thick] (-2,-6) -- (-.2,-6) (.2,-6) -- (2,-6) (4,-6) -- (5.8,-6) (6.2,-6) -- (8.8,-6) (9.2,-6)--(11,-6);
\begin{scope}[shift={(0,-1.5)},>=latex]
\draw[->] (0,0-.25) -- (0,-1+.25);\draw[->] (0,-1-.25) -- (0,-2+.25);\draw[->] (0,-2-.25) -- (0,-3+.25);\draw[->] (0,-3-.25) -- (0,-4+.25);\draw[->] (0,-4-.25) -- (0,-5+.25);\draw[->] (0,-5-.25) -- (0,-6+.25);\draw[->] (0,-6-.25) -- (0,-7+.25);\draw[->] (0,-7-.25) -- (0,-8+.25);\draw[->] (0,-8-.25) -- (0,-9+.25);
\draw (0,0) node {246 ($q^{9}t^0$)};\draw (0,-1) node {245 ($q^{8}t^1$)};\draw (0,-2) node {244 ($q^{7}t^2$)};\draw (0,-3) node {243 ($q^{6}t^3$)};\draw (0,-4) node {233 ($q^{5}t^4$)};\draw (0,-5) node {232 ($q^{4}t^5$)};\draw (0,-6) node {222 ($q^{3}t^6$)};\draw (0,-7) node {221 ($q^{2}t^7$)};\draw (0,-8) node {211 ($q^{1}t^8$)};\draw (0,-9) node {111 ($q^{0}t^9$)};
\end{scope}
\begin{scope}[shift={(3,-3)}, >=latex]
\draw[->] (0,0-.25) -- (0,-1+.25);\draw[->] (0,-1-.25) -- (0,-2+.25);\draw[->] (0,-2-.25) -- (0,-3+.25);\draw[->] (0,-3-.25) -- (0,-4+.25);\draw[->] (0,-4-.25) -- (0,-5+.25);\draw[->] (0,-5-.25) -- (0,-6+.25);
\draw (0,0) node {235 ($q^{7}t^{1}$)};\draw (0,-1) node {234 ($q^{6}t^{2}$)};\draw (0,-2) node {242 ($q^{5}t^{3}$)};\draw (0,-3) node {223 ($q^{4}t^{4}$)};\draw (0,-4) node {231 ($q^{3}t^{5}$)};\draw (0,-5) node {212 ($q^{2}t^{6}$)};\draw (0,-6) node {121 ($q^{1}t^{7}$)};
\end{scope}
\begin{scope}[shift={(6,-4.5)}, >=latex]
\draw[->] (0,0-.25) -- (0,-1+.25);\draw[->] (0,-1-.25) -- (0,-2+.25);\draw[->] (0,-2-.25) -- (0,-3+.25);\draw[->] (0,-3-.25) -- (0,-4+.25);
\draw (0,0) node {224 ($q^{5}t^{2}$)};\draw (0,-1) node {241 ($q^{4}t^{3}$)};\draw (0,-2) node {213 ($q^{3}t^{4}$)};\draw (0,-3) node {131 ($q^{2}t^{5}$)};\draw (0,-4) node {112 ($q^{1}t^{6}$)};
\end{scope}
\begin{scope}[shift={(9,-3.5)}, >=latex]	
\draw[->] (0,0-.25) -- (0,-1+.25);\draw[->] (0,-1-.25) -- (0,-2+.25);\draw[->] (0,-2-.25) -- (0,-3+.25);\draw[->] (0,-3-.25) -- (0,-4+.25);
\draw (0,0) node {135 ($q^{6}t^{1}$)};\draw (0,-1) node {134 ($q^{5}t^{2}$)};\draw (0,-2) node {133 ($q^{4}t^{3}$)};\draw (0,-3) node {132 ($q^{3}t^{4}$)};\draw (0,-4) node {122 ($q^{2}t^{5}$)};
\end{scope}
\end{tikzpicture}
$$}
\caption{The $f$-chains of nonzero lengths in Example \ref{ex:chain4-2'}.}
\label{fig:f chain (4m-1)x4}
\end{figure}
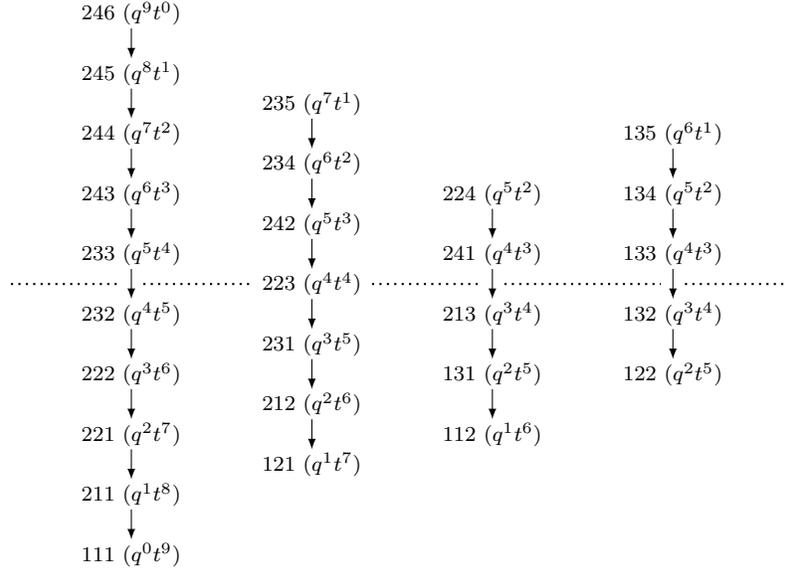

Moreover, $D_1=\{246, 235, 224, 135\}$, $D_2=\emptyset$, $D_3=\{124, 113, 123\}$, $D'_1=\{121\}$, $D'_2=\{111, 112, 122\}$, $D'_3=\{124, 113, 123\}$.
The set of initial points $I$ has generating function
$$ C_I(q,t)=q^9t^0 + q^7t^1 + q^5t^2 + q^6t^1+q^4t^2 + q^2t^4 + q^3t^3. $$
The set of terminal points $T$ has generating function
$$ C_T(q,t)=q^0t^9 + q^1t^7 + q^1t^6 + q^2t^5 + q^4t^2 + q^2t^4 + q^3t^3. $$
Since $C_T(q,t)=C_I(t,q)$, we get $C_W(q,t)=C_W(t,q)$.
\end{example}

\begin{rem}
For $n\le 3$, the only rational $q,t$-Catalan polynomials 
for $r\times n$ Dyck paths that are not equal to some $C^{(m)}_n(q,t)$ 
occur in the case when $r=3m-1$ and $n=3$. 
This case is similar to \S\ref{4m-1}: $W=\{(0,\gamma_1,\gamma_2): 1\le\gamma_1\le m,\; 1\le\gamma_2\le\gamma_1+m\}$, $W\setminus A_0=D_1'\cup D_2'$ where $D_1'=\{(0,1,\gamma_2): 1\le \gamma_2\le m\}$ and $D_2'=\{(0,1,m+1\}$;  $W\setminus B_0=D_1\cup D_2$ where $D_1=\{(0,\gamma_1,\gamma_1+m): 1\le \gamma_1\le m\}$ and $D_2=\{(0,1,m)\}$, and
$$
(\area(\gamma),h^+_m(\gamma))=
\begin{cases}
(\gamma_2-1,\; 3m-2\gamma_2), & \textrm{ if } \gamma\in D'_1;\\
(m,\; m-1),& \textrm{ if } \gamma\in D'_2;\\
(2\gamma_1+m-2,\;m-\gamma_1),& \textrm{ if } \gamma\in D_1;\\
(m-1,\;m), & \textrm{ if } \gamma\in D_2.\\
\end{cases}
$$
Thus
\small$$\sum_{\gamma\in D'_1} q^{\area(\gamma)}t^{h^+_m(\gamma)}
=q^0t^{3m-2}+q^1t^{3m-4}+q^2t^{3m-6}+\cdots+q^{m-1}t^m=\sum_{\gamma\in D_1} 
t^{\area(\gamma)}q^{h^+_m(\gamma)},
$$
$$\sum_{\gamma\in D'_2} q^{\area(\gamma)}t^{h^+_m(\gamma)}
=q^mt^{m-1}=\sum_{\gamma\in D_2} 
t^{\area(\gamma)}q^{h^+_m(\gamma)},
$$\normalsize
therefore 
$
\sum_{\gamma\in W\setminus A_0} q^{\area(\gamma)}t^{h^+_m(\gamma)}
=\sum_{\gamma\in W\setminus B_0} t^{\area(\gamma)}q^{h^+_m(\gamma)}.
$
By a similar argument as in Proposition \ref{prop:4m-1,4}, this leads to a
proof of the joint symmetry of the rational $q,t$-Catalan polynomial 
$C_{3m-1,3,1}(q,t)$.

We can express $C_{4m-1,4,1}$ in the form similar to Garsia-Haiman formula as
\tiny
$$\sigma\Big(q^{6 m}\frac{q^3}{(q - t)(q^2 - t)(q^3 - t)} - 
 q^{3 m} t^m\frac{q(t + qt + q^2 + q^2t)}{(q - t)(q^2 - t^2)(q^3 - t)}\Big)+
 q^{2 m} t^{2 m}\frac{qt (qt - 1)}{(q - t)^2(q^2 - t)(t^2 - q)}.
 $$
\normalsize
Moreover, we have the following analogue of Theorem \ref{thm:explicit-coeffs} and the proof is omitted.
\begin{thm}
For $j,k\in\N$, let $c(j,k)$ be the coefficient of $q^j t^k$ in 
$C_{4m-1,4,1}(q,t)$. For $a\in\R$, define $\lfloor a\rfloor^+=\max(\lfloor a\rfloor,0)$. 

\noindent {\rm(a)} If $4m-1\le j+k\le 6m-3$, then  
\small
$$
c(j,k)=\min \left(\Big\lfloor\frac{-6m+5+3j+k}{2}\Big\rfloor^+,\;
                 \Big\lfloor\frac{6m-1-j-k}{2}\Big\rfloor^+,\;
                 \Big\lfloor\frac{-6m+5+j+3k}{2}\Big\rfloor^+\right).
$$
\normalsize
{\rm(b)} If $j+k=4m-2$, then 
\small$$ 
c(j,k)=\min \left(\Big\lfloor\frac{-m+2+j}{2}\Big\rfloor^+,\;
                 \Big\lfloor\frac{-m+2+k}{2}\Big\rfloor^+\right).
$$
\normalsize
{\rm(c)} Otherwise $c(j,k)=0$.

\noindent As a consequence, the sequence $(c(d,0),c(d-1,1),c(d-2,2),\ldots,c(1,d-1),c(0,d))$
is unimodal for every positive integer $d$.
\end{thm}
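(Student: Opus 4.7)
The plan is to mirror the proof of Theorem~\ref{thm:explicit-coeffs} for $C^{(m)}_4(q,t)$. By the joint symmetry established in Proposition~\ref{prop:4m-1,4} we may assume $j \ge k$. Using the chain decomposition $f:A\to B$ of \S\ref{4m-1} (with initial set $I=D_1\cup D_2\cup D_3$ and terminal set $T$ satisfying $C_T(q,t)=C_I(t,q)$) together with Remark~\ref{rem:coefficient}, we get
\[
c(j,k) \;=\; \bigl|C_I(q,t)\bigr|^{\deg=j+k}_{\qdeg\ge j} \;-\; \bigl|C_I(q,t)\bigr|^{\deg=j+k}_{\tdeg>j}.
\]
From the $(\area,h_m^+)$ tables listed in \S\ref{4m-1}, summing the two coordinates shows that the total degree $\area+h_m^+$ equals $\gamma_1+\gamma_2+3m-3\in[4m-1,6m-3]$ on $D_1$ (with the special point $(0,1,m{+}1,2m{+}1)$ contributing $4m-1$), $\gamma_1+4m-3\in[4m-1,5m-4]$ on $D_2$, and the constant $4m-2$ on $D_3$. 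Since total degree is preserved along each $f$-chain, every object of $W$ has total degree in $\{4m-2\}\cup[4m-1,6m-3]$, which gives part~(c) immediately.

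For part~(b), I would verify directly from the definitions of $D_3$ and of $\W\setminus A$ in \S\ref{4m-1} that $D_3\subseteq T$, so $D_3\subseteq I\cap T$ and every $f$-chain of total degree $4m-2$ has length zero. Then $c(j,k)$ reduces to the number of $\gamma\in D_3$ with $\area(\gamma)=\gamma_2+\gamma_3-2=j$. Substituting $\gamma_3=j+2-\gamma_2$, the feasibility conditions $1\le\gamma_2\le m$ and $m+1\le\gamma_3\le\gamma_2+m$ become $\lceil(j-m+2)/2\rceil \le \gamma_2\le\min(m,j-m+1)$; because $j\ge k$ forces $j\ge 2m-1$, the upper bound is just $m$, and the count collapses to $\lfloor(k-m+2)/2\rfloor^+$, which equals $\min\bigl(\lfloor(j-m+2)/2\rfloor^+,\lfloor(k-m+2)/2\rfloor^+\bigr)$ under $j\ge k$.

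For part~(a), I would first check from the explicit formulas that $\area-h_m^+\ge 0$ on $D_1\cup D_2$ and at the special point: on $D_1$ the difference is $\gamma_1+3\gamma_2-m-3\ge 2m-1$, on $D_2$ it is $\gamma_1+2\gamma_3-2m-3\ge 3$, and at the special point it is $2m+1$. Consequently, when $j\ge k$ and $j+k\in[4m-1,6m-3]$, every initial point with the correct total degree has $h_m^+\le(j+k)/2\le j$, so the subtrahend $|C_I|^{\deg=j+k}_{\tdeg>j}$ vanishes and
\[
c(j,k) \;=\; \#\bigl\{\gamma\in D_1\cup D_2\cup\{\text{special}\} : \area(\gamma)+h_m^+(\gamma)=j+k,\ \area(\gamma)\ge j\bigr\}.
\]
I would then translate each of the three pieces to a range for $\gamma_1$: on $D_1$, with $s=\gamma_1+\gamma_2=j+k-3m+3$, the constraints $\gamma_1\in[2,m]$, $\gamma_2\in[m,\gamma_1+m]$, $\area\ge j$ become an explicit interval for $\gamma_1$; on $D_2$, with $\gamma_1=a+d-4m+3$, they become another interval; and the special point contributes at most once. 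Summing the three interval lengths and simplifying with $j+k$ substituted for $s$ is routine and produces exactly the three quantities $\lfloor(-6m+5+3j+k)/2\rfloor^+$, $\lfloor(6m-1-j-k)/2\rfloor^+$, $\lfloor(-6m+5+j+3k)/2\rfloor^+$; taking the pairwise-dominant one gives the stated minimum.

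The main obstacle is bookkeeping, not any new idea: unlike the $4m\times 4$ case of Remark~\ref{remark:4.2}, the union $D_1\cup D_2$ does not fit cleanly into a single triangular region, and the special point $(0,1,m{+}1,2m{+}1)$, sitting at $(\area,h_m^+)=(3m,m-1)$, must be patched in at the extremal corner lest it throw off the count near $j\approx 3m$. Once all three pieces are combined, the min-of-three-floors structure along each anti-diagonal $j+k=d$ is non-decreasing then non-increasing in $j$ (the three piecewise-linear ceilings are monotone with opposite slopes), yielding unimodality.
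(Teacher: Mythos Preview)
The paper omits this proof entirely, simply stating that it is analogous to Theorem~\ref{thm:explicit-coeffs}. Your proposal is correct and is precisely that analogue: you use Remark~\ref{rem:coefficient} applied to the chain decomposition of \S\ref{4m-1}, split by total degree into the $D_3$ piece (degree $4m-2$) and the $D_1\cup D_2$ piece (degrees in $[4m-1,6m-3]$), and count directly.

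One simplification you are missing: contrary to your ``main obstacle'' paragraph, $D_1\cup D_2$ (with the special point $(0,1,m{+}1,2m{+}1)$ included in $D_1$) \emph{does} fit cleanly into a triangle. The proof of Proposition~\ref{prop:4m-1,4} already records that the pairs $(h_m^+,\area)$ for $\gamma\in D_1\cup D_2$ are exactly the lattice points $\Delta$ in the closed triangle with vertices $(0,6m-3)$, $(m-1,3m)$, $(2m-2,2m+1)$; the special point lands at the vertex $(m-1,3m)$. The three edges of this triangle are $3x+y=6m-3$, $x+y=4m-1$, and $2x+y=6m-3$. Intersecting $\Delta$ with the line $x+y=j+k$ and the half-plane $x\le k$ then gives the interval $\lceil(6m-3-j-k)/2\rceil\le x\le\min(6m-3-j-k,\,k)$, whose length is $\lfloor(6m-1-j-k)/2\rfloor^+$ or $\lfloor(-6m+5+j+3k)/2\rfloor^+$ according as $k\ge 6m-3-j-k$ or not, and one checks (as you do) that under $j\ge k$ the third quantity $\lfloor(-6m+5+3j+k)/2\rfloor^+$ always dominates. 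This replaces your three separate interval computations for $D_1$, $D_2$, and the special point with a single one, and the patching you were worried about never arises.
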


\subsection{Gorsky and Mazin's approach}
\label{subsec:gorsky-mazin}

\def\coarea{{\rm coarea}}
\def\cowt{{\rm cowt}}
\def\wt{{\rm wt}}
\def\arm{{\rm arm}}
\def\leg{{\rm leg}}

The reader may find it helpful to compare our method with the 
following combinatorial formulation of Gorsky and Mazin's 
approach in \cite{GM2}. Let $r>3$ be an integer with $\gcd(r,3)=1$, and set $k=\lfloor r/3\rfloor$.
Define 
$$
\aligned
&X=\{(c,d)\in\N^2: 0\leq c\leq d, c\leq k, d\leq \lfloor 2r/3\rfloor\},\\
&Y=\{(a,b)\in\N^2: a+3b\leq m-1\}.\\
\endaligned
$$ 
One sees that
$X$ is the disjoint union $X_1\cup X_2\cup X_3$, 
and $Y$ is the disjoint union $Y_1\cup Y_2\cup Y_3$, where
 $$\aligned
 &X_1=\{(c,d)\in X: d\leq k\};\\
 &X_2=\{(c,d)\in X: d>k\mbox{ and }d-c\leq k\};\\
 &X_3=\{(c,d)\in X: d-c>k\};\\
 &Y_1=\{(a,b)\in Y: a+b\leq k\};\\
 &Y_2=\{(a,b)\in Y: a+b>k\mbox{ and }a+b+k\mbox{ is even}\};\\
 &Y_3=\{(a,b)\in Y: a+b>k\mbox{ and }a+b+k\mbox{ is odd}\}.\\
 \endaligned
 $$
For $(c,d)\in X$,
define 
  $\area(c,d)=m-1-(c+d)$ and
  $h^+(c,d)=$ the number of cells $x$ in the partition diagram with
    $3a(x)-ml(x)\in\{-2,-1,0,\ldots,m\}$.
For $(a,b)\in Y$, define  $\wt_1(a,b)=m-1-(a+2b)$ and
  $\wt_2(a,b)=a+b$.
Define $f:X\rightarrow Y$ by
$$
f(c,d)=
\begin{cases}
(d-c,c), & \textrm{ if } (c,d)\in X_1;\\
(3d-2k-c,c-d+k), & \textrm{ if } (c,d)\in X_2;\\
(3c-d+2k+2,d-c-k-1),& \textrm{ if } (c,d)\in X_3.\\
\end{cases}
$$
Then $f$ is a bijection that sends $\area$ to $\wt_1$ and $h^+$ to $\wt_2$.
Define $I:Y\rightarrow Y$ via $I(a,b)=(m-1-a-3b,b)$. Then $I$ is an involution 
that interchanges $\wt_1$ and $\wt_2$. Finally, 
let $g=f^{-1}\circ I\circ f$ be an involution on $X$. 
Then $g$ interchanges $\area$ and $h^+$. This implies joint symmetry of $C_{r,3,1}(q,t)$.
\end{rem}

\section{The Joint Symmetry of $C^{(m)}_5(q,t)$}
\label{sec:n=5}

For $n=5$, we define a conjectural chain map $f$ 
with domain $W\setminus T$ in Figure~\ref{fig:f5}.  
A routine but lengthy case-by-case study shows that $f$ decreases $\area$ 
by 1 and increases $\dinv_m$ by 1, and $f$ is one-to-one.
Our main obstacle to proving Conjecture~\ref{conj:reduced} for $n=5$
is that we do not know how to describe the set
$I$ of initial objects of the $f$-chains, which is 
the complement of the image of $f$ in $W$.
We leave it as an open problem to characterize $I$ and the image of $f$
explicitly, and to prove $C_T(q,t)=C_I(t,q)$.
We wrote Macaulay 2 code verifying that $C_T(q,t)=C_I(t,q)$ for $m\leq 10$.
So, the chain conjecture~\ref{conj:reduced} and joint symmetry holds
for these $m$ when $n=5$.

{\tiny
\begin{figure}[h]
$$
\aligned
& \text{ if }\gamma_2\leq m\text{ then } f(\gamma)= (\gamma_0,\gamma_2,\gamma_3,\gamma_4,\gamma_1-1);\\
& \text{ if }\gamma_2>m\text{ then }\\
& \quad \left| 
\aligned
& \text{ if }\gamma_4\geq m\text{ then }\\ 
&\quad \left| 
\aligned
& \text{ if }\gamma_3-\gamma_1\leq m\text{ then } f(\gamma)= (\gamma_0,\gamma_1,\gamma_3,\gamma_4,\gamma_2-1); \\
& \text{ if }\gamma_3-\gamma_1> m\text{ then }\\
& \quad \left|  
\aligned 
&  \text{ if }\gamma_4-\gamma_2\leq m\text{ then }\\
 & \quad \left|  
\aligned 
 &  \text{ if }\gamma_3-\gamma_4\leq m+1\text{ then }f(\gamma)= (\gamma_0,\gamma_1,\gamma_2,\gamma_4,\gamma_3-1); \\
 &  \text{ if }\gamma_3-\gamma_4>  m+1\text{ then } f(\gamma)= (\gamma_0,\gamma_1,\gamma_4+1,\gamma_2-1,\gamma_3-1);\\
\endaligned\right.\\
&  \text{ if }\gamma_4-\gamma_2> m\text{ then }  f(\gamma)= (\gamma_0,\gamma_1,\gamma_2,\gamma_3,\gamma_4-1); \\
\endaligned\right.\\
\endaligned
\right.\\
& \text{ if }\gamma_4<m\text{ then }\\
&\quad \left| 
\aligned
& \text{ if }\gamma_3-\gamma_1\leq m\text{ then }\\
& \quad \left|  
\aligned
&\text{ if }\gamma_2-\gamma_4\leq m+1\text{ then } f(\gamma)= (\gamma_0,\gamma_1,\gamma_3,\gamma_4,\gamma_2-1); \\
&\text{ if }\gamma_2-\gamma_4> m+1\text{ then } \\
& \quad \left|  
\aligned 
&\text{ if }\gamma_3-\gamma_4\leq m+1\text{ then } \\
& \quad \left| \aligned &\text{ if }\gamma_2-\gamma_3>m\text{ or }\gamma_3\le m\text{ then } f(\gamma)= (\gamma_0,\gamma_4+1,\gamma_3,\gamma_1-1,\gamma_2-1); \\
&\text{ if }\gamma_2-\gamma_3\le m\text{ and }\gamma_3> m\text{ then } f(\gamma)= (\gamma_0,\gamma_4+1,\gamma_1,\gamma_3-1,\gamma_2-1); \\
\endaligned
\right.\\
&\text{ if }\gamma_3-\gamma_4> m+1\text{ then }
f(\gamma)= (\gamma_0,\gamma_4+1,\gamma_1,\gamma_2-1,\gamma_3-1); \\
\endaligned
\right.
\endaligned
\right.\\
& \text{ if }\gamma_3-\gamma_1> m\text{ then }\\
& \quad \left|  
\aligned
&\text{ if }\gamma_2-\gamma_4\leq m+1\text{ then }\\
& \quad \left| 
\aligned 
&  \text{ if }\gamma_3-\gamma_4\leq m+1\text{ then }
f(\gamma)= (\gamma_0,\gamma_1,\gamma_2,\gamma_4,\gamma_3-1);\\
& \text{ if }\gamma_3-\gamma_4> m+1\text{ then } f(\gamma)= (\gamma_0,\gamma_4+1,\gamma_1,\gamma_2-1,\gamma_3-1)\\
\endaligned
\right.\\
&\text{ if }\gamma_2-\gamma_4> m+1\text{ then }\\
& \quad \left|  
\aligned 
&\text{ if }\gamma_3-\gamma_4> m+2\text{ and } \gamma_2-\gamma_4> m+2\text{ then } \\
&\hspace{100pt} f(\gamma)= (\gamma_0,\gamma_4+2,\gamma_1-1,\gamma_2-1,\gamma_3-1); \\
&\text{ if }\gamma_3-\gamma_4\leq m+2\text{ or } \gamma_2-\gamma_4= m+2\text{ then } \\ &\hspace{100pt} f(\gamma)= (\gamma_0,\gamma_1,\gamma_4+1,\gamma_2-1,\gamma_3-1). \\
\endaligned
\right.
\endaligned
\right.
\endaligned
\right.\endaligned
\right.\endaligned
$$ 
\caption{The conjectured chain map for $n=5$.}
\label{fig:f5}
\end{figure}
}

\begin{ack}
The authors are grateful to Eugene Gorsky, Mikhail Mazin and Luis Sordo Vieira 
for valuable discussions. Gorsky's comments prompted us to prove
Conjecture~\ref{conj:qtcat} for $n\leq 4$ by using the results
of~\S\ref{sec:proof-jsymm}. The authors are also grateful to the
anonymous referees for many useful comments.
\end{ack}

\newpage


\end{document}